\newcommand{\be}{\beta}
\newcommand{\id}{\mbox{Id}}
\newcommand{\C}{\mathbb C}
\newcommand{\R}{\mathbb R}
\newcommand{\ca}{\mathcal A}
\newcommand{\cac}{\mathcal C}
\newcommand{\ce}{\mathcal E}
\newcommand{\cj}{\mathcal J}
\newcommand{\cl}{\mathcal L}
\newcommand{\cs}{\mathcal S}
\newcommand{\al}{\alpha}
\newcommand{\der}{\delta}
\newcommand{\ga}{\gamma}
\newcommand{\ka}{\kappa}
\newcommand{\la}{\lambda}
\newcommand{\si}{\sigma}
\newcommand{\vp}{\varphi}
\newtheorem{theorem}{Theorem}[section]
\newtheorem{corollary}[theorem]{Corollary}
\newtheorem{definition}[theorem]{Definition}
\newtheorem{lemma}[theorem]{Lemma}
\newtheorem{notation}[theorem]{Notation}
\newtheorem{proposition}[theorem]{Proposition}
\theoremstyle{remark}
\newtheorem{remark}[theorem]{Remark}
\date{\today}
\begin{document}

\makeatletter
\def\@settitle{\begin{center}%
  \baselineskip14\p@\relax
    \normalfont\LARGE
\@title
  \end{center}%
}
\makeatother

\title{Integration with respect to the non-commutative fractional Brownian motion}

\author{Aur\'elien Deya}
\address[A. Deya]{Institut Elie Cartan, University of Lorraine
B.P. 239, 54506 Vandoeuvre-l\`es-Nancy, Cedex
France}
\email{aurelien.deya@univ-lorraine.fr}

\author{Ren\'e Schott}
\address[R. Schott]{Institut Elie Cartan, University of Lorraine
B.P. 239, 54506 Vandoeuvre-l\`es-Nancy, Cedex
France}
\email{rene.schott@univ-lorraine.fr}

\keywords{non-commutative stochastic calculus; non-commutative fractional Brownian motion; integration theory}

\subjclass[2010]{46L53,60H05,60G22}

\begin{abstract}


We study the issue of integration with respect to the non-commutative fractional Brownian motion, that is the analog of the standard fractional Brownian in a non-commutative probability setting.

\smallskip

When the Hurst index $H$ of the process is stricly larger than $1/2$, integration can be handled through the so-called Young procedure. The situation where $H=1/2$ corresponds to the specific free case, for which an It{\^o}-type approach is known to be possible.

\smallskip

When $H<1/2$, rough-path-type techniques must come into the picture, which, from a theoretical point of view, involves the use of some a-priori-defined L{\'e}vy area process. We show that such an object can indeed be \enquote{canonically} constructed for any $H\in (\frac14,\frac12)$. Finally, when $H\leq 1/4$, we exhibit a similar non-convergence phenomenon as for the non-diagonal entries of the (classical) L{\'e}vy area above the standard fractional Brownian.
\end{abstract} 

\maketitle

\vspace{-0.5cm}

\section{Introduction: the non-commutative fractional Brownian motion}\label{sec:introduction}

In classical probability theory, the fractional Brownian motion (fBm in the sequel) is considered as one of the most natural extensions of the standard Wiener process. From a modelling point of view, fractional noises offer the possibility to account for long-range dependency phenomenon, which easily explains their large success in various domains ranging from biological sciences to mathematical finance. The literature related to fBm now comprises thousands of publications, and we will only refer here to the nice survey \cite{nourdin-fbm}, which offers an overview on some of the most interesting aspects of this specific Gaussian process.

\smallskip

Unfortunately, when it comes to stochastic integration, the long-range dependence of the fBm turns into a major drawback and is known to be the source of important difficulties. In particular, fBm does not satisfy the martingale property, which rules out the possibility to use It{\^o} theory as a way to investigate the integration problem. More or less sophisticated alternative procedures must then come into the picture, based on either a \enquote{stochastic} approach (Malliavin calculus, Skorohod integral) or a \enquote{pathwise} strategy (Young integral, rough paths theory). Here again, any attempt to draw up an exhaustive list of the publications related to the fractional integration issue would be vain, and we will only quote the recent survey \cite{friz-hairer} about pathwise methods - the most efficient approach so far. 

\

In this first section, and as an introduction to the subsequent investigations, we propose to recall that the above fundamental objects (Wiener process, fBm, Gaussian processes) all admit immediate analogs in the so-called \textit{non-commutative probability setting}, the main framework of our study. Let us first recall, at a very general level, that non-commutative probability theory has received a lot of attention since the late 80's and the pathbreaking results of Voiculescu on large random matrices \cite{voiculescu}. Based on Voiculescu's results (together with subsequent extensions), the non-commutative paradigm can somehow be seen as a privileged formalism to study the asymptotic behaviour of standard classes of random matrices growing to infinity.

\smallskip

The rigourous presentation of this setting - which will prevail throughout the study - goes as follows, along the terminology of \cite{nica-speicher}:  

\begin{definition}
We call a non-commutative probability space any pair $(\ca,\vp)$ where:

\smallskip

\noindent
$(i)$ $\ca$ is a unital algebra over $\C$ endowed with an antilinear $\ast$-operation $X\mapsto X^\ast$ such that $(X^\ast)^\ast=X$ and $(XY)^\ast=Y^\ast X^\ast$ for all $X,Y\in \ca$. In addition, there exists a norm $\| .\| : \ca \to [0,\infty[$ which makes $\ca$ a Banach space, and such that for all $X,Y\in \ca$, $\| XY\|\leq \|X\| \|Y\|$ and $\|X^\ast X\|=\|X\|^2$.

\smallskip

\noindent
$(ii)$ $\vp:\ca \to \C$ is a linear functional such that $\vp(1)=1$, $\vp(XY)=\vp(YX)$, $\vp(X^\ast X)\geq 0$ for all $X,Y\in \ca$, and $\vp(X^\ast X)=0 \Leftrightarrow X=0$. We call $\vp$ the trace of the space.

\smallskip

In this setting, we will call any element $X\in \ca$ a non-commutative random variable, and any path $X_.:[0,T] \to \ca$ a non-commutative process. 
\end{definition}

A fundamental feature of any such non-commutative probability space lies in the close (hidden) link between the norm $\|.\|$ in item $(i)$ and the trace $\vp$ in item $(ii)$. Namely, for any $X\in \ca$, it can be shown on the one hand (see \cite[Proposition 3.8]{nica-speicher}) that 
\begin{equation}\label{trace-norm-1}
|\vp(X)|\leq \|X\| \ ,
\end{equation}
and, even more strikingly, one has (see \cite[Proposition 3.17]{nica-speicher})
\begin{equation}\label{trace-norm-2}
\|X\|:=\lim_{r\to \infty} \vp\big(\big(X X^\ast\big)^r\big)^{\frac{1}{2r}} \ .
\end{equation}
Thus, the trace $\vp$ can somehow be seen as the \enquote{expectation} in this setting and, along this analogy, the norm in $\ca$ can then be recovered as the \enquote{$L^\infty$-norm}. Besides, using standard spectral properties, we can provide some partial correspondance between the non-commutative framework and more classical probabilistic objects: namely, with any \textit{self-adjoint} element $X\in \ca$, we can associate a unique probability measure $\nu_X$ (called the \textit{law} of $X$) such that for every $k\geq 0$, 
\begin{equation}\label{law}
\vp\big(X^k\big)=\int_{\R} x^k \nu_X(dx) \ .
\end{equation}
Note that due to the (possible) non-commutativity of $\ca$, there is no hope to raise such a correspondance at the level of vectors (or processes), and to associate any non-commutative random vector $(X_1,\ldots,X_d)$ with a measure on $\R^d$. Instead, we consider that the \enquote{stochastic} dynamics of a given family $\{X_i\}_{i\in i}$ of non-commutative random variables is fully accounted by the set of its joint moments
$$\vp\big( X_{i_1}\cdots X_{i_r}\big) \quad , \quad r\geq 1 \ , i_1,\ldots,i_r \in I \ .$$

\

With these preliminaries in mind, let us turn to the presentation of the non-commutative process at the center of our study: the \textit{non-commutative fractional Brownian motion} (NC-fBm in the sequel). Just as the standard fBm is an example of a Gaussian process, the NC-fBm is part of a well-identified and important class of non-commutative processes, the so-called \textit{semicircular processes}. For a clear description of this family, let us introduce the following notation, borrowed from \cite{nica-speicher}, and that we will extensively use in our study:

\begin{notation}
Given elements $X_1,\ldots,X_{2m} \in \ca$ and a pairing $\pi$ of $\{1,\ldots,2m\}$ (that is, a partition of $\{1,\ldots,2m\}$ into $m$ disjoint subsets, each of cardinality $2$), we set
\begin{equation}\label{notation:vp-pi}
\ka_\pi\big( X_1,\ldots,X_{2m}\big):=\prod_{\{p,q\}\in \pi} \vp\big( X_{p}X_{q}\big) \ .
\end{equation}

Also, we denote by $NC_2(\{1,\ldots,2m\})$ or $NC_2(2m)$ the subset of non-crossing pairings of $\{1,\ldots,2m\}$, that is the subset of pairings $\pi$ of $\{1,\ldots,2m\}$ for which there is no elements $\{p_1,q_1\},\{p_2,q_2\}\in \pi$ with $p_1 < p_2 <q_1 <q_2$.
\end{notation}

\begin{definition}\label{def:semicircular-family}
With the above notation, we call a semicircular family in a non-commutative probability space $(\ca,\vp)$ any collection $\{X_i\}_{i \in I}$ of self-adjoint elements in $\ca$ such that, for every even integer $r\geq $1 and
all $i_1,\ldots,i_r \in I$, one has
\begin{equation}\label{form-wick}
\vp\big( X_{i_1}\cdots X_{i_r}\big)=\sum_{\pi \in NC_2(\{1,\ldots,r\})} \ka_\pi\big( X_{i_1},\ldots,X_{i_r}\big) \ ,
\end{equation}
and $\vp\big( X_{i_1}\cdots X_{i_r}\big)=0$ whenever $r$ is an odd integer.
\end{definition}

The law of a semicircular family (i.e., the set of its joint moments) is thus governed by what can be seen as a \enquote{non-commutative Wick formula}, obtained by restricting the usual sum to the sole non-crossing pairings. In particular, this law is fully determined by the set of the covariances $\{\vp( X_i X_j), \, i,j\in I\}$ of the family.

\smallskip

It is worth mentioning here that this analogy with the classical Gaussian processes extends through a fundamental central-limit property. In brief, semicircular families also appear as the universal limit (in the sense of the joint moments) of the renormalized sum of a sequence of \enquote{independent} NC-random families, where the notion of independence must be understood in some specific sense, the so-called free sense (see \cite[Theorem 8.17]{nica-speicher} for a complete statement).

\smallskip

As an immediate consequence of Definition \ref{def:semicircular-family} (and as an additional similarity with the Gaussian model), observe that the semicircular property is stable through linear real transformations. Let us label this elementary result for further references:
\begin{lemma}
Let $(X_1,\ldots,X_k)$ ($k\geq 1$) be a semicircular vector and $M$ a $k \times \ell$-matrix ($\ell \geq 1$) with real entries. Then $Y:=MX$ is a semicircular vector as well.
\end{lemma}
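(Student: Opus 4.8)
The plan is to verify the defining identity \eqref{form-wick} directly for the components of $Y=MX$, relying only on the fact that the trace $\vp$ (and hence each pairing functional $\ka_\pi$) is multilinear in its arguments. Write the components as $Y_j=\sum_{i} M_{ji}X_i$, each a real linear combination of the $X_i$. Since the entries $M_{ji}$ are real and every $X_i$ is self-adjoint, we have $Y_j^\ast=\sum_i M_{ji}X_i^\ast=\sum_i M_{ji}X_i=Y_j$, so the $Y_j$ are self-adjoint and form a legitimate candidate for a semicircular family. It then remains to compute the joint moments $\vp\big(Y_{j_1}\cdots Y_{j_r}\big)$ and match them with the right-hand side of \eqref{form-wick}. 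The odd case is immediate: expanding by multilinearity,
\[
\vp\big(Y_{j_1}\cdots Y_{j_r}\big)=\sum_{i_1,\ldots,i_r}M_{j_1i_1}\cdots M_{j_ri_r}\,\vp\big(X_{i_1}\cdots X_{i_r}\big),
\]
and for odd $r$ every term $\vp\big(X_{i_1}\cdots X_{i_r}\big)$ vanishes by the semicircularity of $X$, so the whole sum is zero.

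For even $r$ I would substitute the Wick-type formula \eqref{form-wick} for $\vp\big(X_{i_1}\cdots X_{i_r}\big)$ into the same expansion and exchange the two finite sums, over $(i_1,\ldots,i_r)$ and over $\pi\in NC_2(r)$. The decisive step is the factorization of the matrix coefficients along the blocks of a fixed pairing $\pi$: since each index $i_a$ occurs exactly once and $\pi$ partitions $\{1,\ldots,r\}$ into pairs, one regroups $\prod_a M_{j_ai_a}=\prod_{\{p,q\}\in\pi}M_{j_pi_p}M_{j_qi_q}$, after which the summation over $(i_1,\ldots,i_r)$ splits into a product indexed by the blocks of $\pi$ (the indices attached to distinct blocks being disjoint). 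Each block factor then reads $\sum_{i_p,i_q}M_{j_pi_p}M_{j_qi_q}\vp\big(X_{i_p}X_{i_q}\big)=\vp\big(Y_{j_p}Y_{j_q}\big)$, again by bilinearity of $\vp$. Collecting these yields
\[
\vp\big(Y_{j_1}\cdots Y_{j_r}\big)=\sum_{\pi\in NC_2(r)}\prod_{\{p,q\}\in\pi}\vp\big(Y_{j_p}Y_{j_q}\big)=\sum_{\pi\in NC_2(r)}\ka_\pi\big(Y_{j_1},\ldots,Y_{j_r}\big),
\]
which is exactly \eqref{form-wick} for the family $\{Y_j\}$.

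The main obstacle here, if it deserves the name, is purely combinatorial bookkeeping in the factorization step; no analytic input beyond the multilinearity of $\vp$ is required, and the set $NC_2(r)$ plays no active role other than being literally the same index set on both sides of the computation, which is precisely why the non-crossing Wick structure is preserved under real linear maps.
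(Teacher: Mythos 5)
Your proof is correct, and it is precisely the direct verification that the paper has in mind: the paper states this lemma without proof, calling it an immediate consequence of Definition 1.3, and your computation (multilinear expansion, substitution of the Wick-type formula for $X$, and factorization of the matrix coefficients over the blocks of each non-crossing pairing so that each block contributes $\vp(Y_{j_p}Y_{j_q})$) is exactly that immediate verification, including the correct observation that the self-adjointness of the $Y_j$ requires the entries of $M$ to be real.
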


It is also worth recalling the non-expert reader that the semicircular property is named after the probability distribution it generates, when considering single random variables:

\begin{lemma}\label{lem:law-semicircular}
The law $\nu_X$ (in the sense of (\ref{law})) of a semicircular random variable $X$ is the semicircular distribution of variance $\si^2:=\vp(X^2)$, that is $\nu_X$ is the probability measure with density given by
$$p_{\si^2}(x):=\frac{1}{2\pi \si^2} \sqrt{4\si^2-x^2} \, \mathbf{1}_{\{|x|\leq 2\si\}} \ .$$
\end{lemma}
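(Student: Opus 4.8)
The plan is to prove the coincidence of the two probability measures by showing that they carry the same moment sequence, and then to invoke the fact that a compactly supported measure is uniquely determined by its moments. Since $\nu_X$ is defined through (\ref{law}) precisely as the measure whose $k$-th moment is $\vp(X^k)$, it suffices to compute these numbers explicitly and to check that they agree with the moments of $p_{\si^2}$.

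First I would compute $\vp(X^k)$ by applying the non-commutative Wick formula of Definition \ref{def:semicircular-family} to the single-variable family $i_1=\cdots=i_k$. For odd $k$ the moment vanishes by definition, matching the symmetry of $p_{\si^2}$. For $k=2m$, every block $\{p,q\}$ of a pairing $\pi$ contributes the same factor $\vp(X^2)=\si^2$, so that $\ka_\pi(X,\ldots,X)=\si^{2m}$ for each $\pi\in NC_2(2m)$, whence
\begin{equation*}
\vp(X^{2m})=\#NC_2(2m)\cdot \si^{2m}.
\end{equation*}
It then remains to identify $\#NC_2(2m)$ with the Catalan number $C_m=\frac{1}{m+1}\binom{2m}{m}$. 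I would obtain this by conditioning on the partner of the point $1$: non-crossingness forces this partner to have an even index $2k$ and splits the remaining points into an \enquote{inner} interval $\{2,\ldots,2k-1\}$ and an \enquote{outer} interval $\{2k+1,\ldots,2m\}$, each of which must be paired non-crossingly and independently, yielding the Catalan recursion $\#NC_2(2m)=\sum_{k=1}^m \#NC_2(2k-2)\cdot\#NC_2(2m-2k)$.

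On the analytic side, I would then verify that the semicircular density produces the same numbers. Under the scaling $x\mapsto x/\si$, which turns $p_{\si^2}$ into the standard law $p_1$ on $[-2,2]$ and multiplies the $2m$-th moment by $\si^{2m}$, it is enough to treat $\si=1$ and to show $\int_{-2}^2 x^{2m} p_1(x)\,dx=C_m$, the odd moments vanishing by parity. This last computation can be carried out via the substitution $x=2\sin\te$, reducing the integral to a Wallis-type integral, or by checking that the sequence $\big(\int x^{2m}p_1(x)\,dx\big)_m$ obeys the very same recursion as above. I expect this matching — the fact that counting non-crossing pairings and integrating against the semicircle density both produce the Catalan numbers — to be the genuine content of the statement; everything surrounding it is formal.

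Finally, with both moment sequences shown to equal $C_m\si^{2m}$ in even degree and $0$ in odd degree, I would conclude $\nu_X=p_{\si^2}$. This uses compact support: by (\ref{trace-norm-2}) the self-adjoint element $X$ has finite norm $\|X\|$, so its spectrum, and hence $\nu_X$, is supported in $[-\|X\|,\|X\|]$, while $p_{\si^2}$ is supported in $[-2\si,2\si]$ (equivalently, the bound $C_m\si^{2m}\leq (2\si)^{2m}$ already makes the moment problem determinate through Carleman's criterion). Two compactly supported measures with identical moments must coincide, since polynomials are dense in the continuous functions on any compact set containing both supports, which completes the proof.
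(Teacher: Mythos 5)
Your argument is correct and complete: the Wick formula reduces every even moment to $|NC_2(2m)|\,\si^{2m}$, the interval decomposition at the partner of the point $1$ identifies $|NC_2(2m)|$ with the Catalan number, the Wallis-type computation gives the same moments for the semicircle density, and compact support (via (\ref{trace-norm-2}) and the spectral theorem) settles determinacy. The paper states this lemma without proof, as it is the classical result from the free-probability literature (see Nica--Speicher), and the proof you give is precisely the standard one it implicitly relies on; no gaps.
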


\

Here is finally the definition of the process (or rather the family of processes) at the center of our study: 
\begin{definition}\label{defi:q-bm}
In a NC-probability space $(\ca,\vp)$, and for every $H\in (0,1)$, we call a non-commutative fractional Brownian motion (NC-fBm) of Hurst index $H$ any semicircular family $\{X_t\}_{t\geq 0}$ in $(\ca,\vp)$ with covariance function given by the formula
\begin{equation}\label{cova-NC-fBm}
\vp\big( X_{s}X_{t}\big)=\frac12 \big\{s^{2H}+t^{2H}-|t-s|^{2H}\big\} \ .
\end{equation}
In particular, for every $t\geq 0$, $X_t$ is distributed along the semicircular distribution of variance $t^{2H}$.
\end{definition}

This definition should of course not be a surprise to any reader familiar with the definition of the standard fBm (in the classical setting). Formula (\ref{cova-NC-fBm}) is indeed nothing but the covariance function of the latter process. Lifting the formula to the level of the processes (using (\ref{form-wick})) gives rise to very different dynamics though, as can immediately be seen from Lemma \ref{lem:law-semicircular}. 

\smallskip

Note that for every fixed Hurst index $H\in (0,1)$, the existence of such a NC-fBm in some NC-probability space $(\ca,\vp)$ follows (for instance) from the general semicircular constructions of \cite{q-gauss} in the free Fock space.

\smallskip

Just as in the classical setting, the situation where $H=\frac12$ is very specific: the resulting non-commutative process here corresponds to the celebrated free Brownian motion, that is the non-commutative counterpart of the standard Wiener process. In this case, the disjoint increments of $X$ are known to satisfy the above-mentioned free independence property, a powerful tool at the very core of the results of \cite{biane-speicher} on non-commutative stochastic integration (see Section \ref{subsec:free-case} below for a few additional comments on this situation).

\smallskip

Unfortunately, as soon as $H\neq \frac12$ - which is the condition we have in mind here -, it can be shown that the disjoint increments of the NC-fBm are no longer freely independent. Free independence will thus not play any role in our analysis, and for this reason, we refrain from giving the exact definition of this property.

\

Before going further with the integration problem, let us mention the fact that this is not the first occurrence of the process in the literature. In \cite{nourdin-taqqu}, Nourdin and Taqqu have shown that the NC-fBm arises as the limit of natural sums constructed from a given stationary semicircular process. For the sake of conciseness, we cannot give a full account on their results (which relies in particular on the consideration of the Tchebycheff polynomials), but let us report the following simplified statement as an illustration of such asymptotic properties:

\begin{proposition}{\cite[Proposition 8.3]{nourdin-fbm}}
In a NC-probability space $(\ca,\vp)$, let $(U_k)_{k\geq 1}$ be a semicircular sequence such that $\vp(U_k)=0$, $\vp(U_k^2)=1$ and with stationary covariance (i.e., $\vp(X_kX_\ell)=\rho(k-\ell)$) satisfying 
$$\sum_{k,\ell=1}^n \rho(k-\ell)\sim C n^{2H} L(n) \quad  \text{as} \ n\to \infty\ ,$$
where $C$ is a constant and $L:(0,\infty)\to (0,\infty)$ is a function that slowly varies at infinity, i.e. $L(ct)/L(t)\stackrel{t\to \infty}{\to} 1$ for any constant $c>0$. Then, as $n\to \infty$,
$$V^{(n)}_.:=\frac{1}{n^H \sqrt{L(n)}} \sum_{k=1}^{\left\lfloor n.\right\rfloor} U_k \to X_. \ ,$$
where $(X_t)_{t\geq 0}$ is a NC-fractional Brownian motion of Hurst index $H$. To be more specific, for all times $t_1,\ldots,t_k \geq 0$, one has $\vp\big(V^{(n)}_{t_1} \cdots V^{(n)}_{t_k}\big) \to \vp\big( X_{t_1} \cdots X_{t_k}\big)$ as $n\to \infty$.
\end{proposition}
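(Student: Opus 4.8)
The plan is to use the fact that both the approximating family $\{V^{(n)}_t\}_{t\geq 0}$ and the target family $\{X_t\}_{t\geq 0}$ are semicircular, so that all of their joint moments are entirely governed by their covariances through the non-commutative Wick formula (\ref{form-wick}); the whole statement then collapses to a single two-point limit.

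First I would note that, for each fixed $n$, the family $\{V^{(n)}_t\}_{t\geq 0}$ is semicircular. Indeed every $V^{(n)}_t$ is a \emph{real} linear combination of the entries of the semicircular sequence $(U_k)$, and semicircularity is preserved under such transformations by the linear-transformation lemma recalled above. Hence, by (\ref{form-wick}), for any even $r$ and any $t_1,\dots,t_r\geq 0$,
\begin{equation*}
\vp\big(V^{(n)}_{t_1}\cdots V^{(n)}_{t_r}\big)=\sum_{\pi\in NC_2(r)}\prod_{\{p,q\}\in\pi}\vp\big(V^{(n)}_{t_p}V^{(n)}_{t_q}\big),
\end{equation*}
with the same identity for the $X$-moments and both sides vanishing for odd $r$. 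Since $NC_2(r)$ is finite and each $\ka_\pi$ is a finite product of pairwise covariances, it suffices to establish the two-point convergence $\vp(V^{(n)}_sV^{(n)}_t)\to\frac12\{s^{2H}+t^{2H}-|t-s|^{2H}\}$ for all $s,t\geq 0$; passing to finite products and finite sums then yields the convergence of every joint moment.

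Next I would compute the covariance explicitly. Setting $S_m:=\sum_{k=1}^m U_k$ and $g(m):=\vp(S_m^2)=\sum_{k,\ell=1}^m\rho(k-\ell)$, the trace property of $\vp$ together with the stationarity of $\rho$ gives, for $s\leq t$ and $m:=\lfloor ns\rfloor\leq p:=\lfloor nt\rfloor$, the polarization identity $\vp(S_mS_p)=\frac12\{g(m)+g(p)-g(p-m)\}$, where the key point is that $\vp((S_p-S_m)^2)=g(p-m)$ by stationarity. Dividing by the normalization factor $n^{2H}L(n)$ leads to
\begin{equation*}
\vp\big(V^{(n)}_sV^{(n)}_t\big)=\frac{1}{2\,n^{2H}L(n)}\big\{g(\lfloor ns\rfloor)+g(\lfloor nt\rfloor)-g(\lfloor nt\rfloor-\lfloor ns\rfloor)\big\}.
\end{equation*}
The hypothesis $g(m)\sim Cm^{2H}L(m)$ says that $g$ is regularly varying of index $2H$, so for each fixed $\tau>0$ one has $g(\lfloor n\tau\rfloor)/(n^{2H}L(n))\to C\tau^{2H}$; feeding $\tau=s,\ t$ and $\tau=t-s$ (with $\lfloor nt\rfloor-\lfloor ns\rfloor\sim n(t-s)$, and the degenerate term $g(0)=0$ matching $|t-s|^{2H}=0$ when $s=t$) recovers, up to the multiplicative constant $C$ that may be absorbed into the normalization, exactly the covariance (\ref{cova-NC-fBm}).

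The hard part is the analytic passage to the limit in the last step. The reduction to covariances and the polarization identity are purely structural, but extracting $g(\lfloor n\tau\rfloor)/(n^{2H}L(n))\to C\tau^{2H}$ from the single asymptotic $g(m)\sim Cm^{2H}L(m)$ requires genuine care with the slowly varying factor: one must simultaneously control the floor rounding $\lfloor n\tau\rfloor\sim n\tau$ and the ratio $L(\lfloor n\tau\rfloor)/L(n)$. The clean way to handle both at once is to invoke the uniform convergence theorem for regularly varying functions, which yields $g(\lfloor n\tau\rfloor)/g(n)\to\tau^{2H}$ uniformly for $\tau$ in compact subsets of $(0,\infty)$ and thereby controls the three terms together, including the boundary behaviour as $\tau\downarrow0$.
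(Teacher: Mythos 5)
Your argument is correct. Note, however, that the paper itself offers no proof of this proposition: it is quoted verbatim from the literature (it is Proposition 8.3 of \cite{nourdin-fbm}, going back to Nourdin--Taqqu \cite{nourdin-taqqu}), so there is nothing in the text to compare your proof against. What you have written is essentially the standard argument behind that cited result: stability of semicircularity under real linear maps, the Wick formula (\ref{form-wick}) reducing convergence of all joint moments to convergence of the single covariance, the polarization identity $\vp(S_mS_p)=\frac12\{g(m)+g(p)-g(p-m)\}$ (where stationarity gives $\vp((S_p-S_m)^2)=g(p-m)$ and the trace property gives the symmetry needed for polarization), and finally the uniform convergence theorem for regularly varying functions to handle the floor rounding and the ratio $L(\lfloor n\tau\rfloor)/L(n)$ simultaneously. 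Your parenthetical remark about absorbing the constant $C$ is also apt: as literally stated the normalization $n^{-H}L(n)^{-1/2}$ produces the covariance $\frac{C}{2}\{s^{2H}+t^{2H}-|t-s|^{2H}\}$, so one should either take $C=1$ or replace the normalization by $n^{-H}(CL(n))^{-1/2}$ to land exactly on (\ref{cova-NC-fBm}); this imprecision is inherited from the quoted statement, not introduced by you.
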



\smallskip

The NC-fBm also appeared more recently through the following result on a possible link between the law of the process and the asymptotic spectral behaviour of growing fractional matrices (keep in mind, however, that the correspondance between NC-fBm and large random matrices is a much less understood topic than in the free situation):

\begin{proposition}{\cite[Theorem 1]{matrix-approx}}
In a classical probability space, consider a collection 
$$\{B^{(n)}(i,j); \, n\geq 1 \, , \, 1\leq i\leq j\leq n\}$$
of independent fractional Brownian motions with common Hurst index $H>\frac12$, and define the sequence of symmetric random matrices $(M^{(n)})_{n\geq 1}$ along the formula
$$M^{(n)}_t (i,j)=\frac{B^{(n)}_t(i,j)}{\sqrt{n}} \ \text{for} \ \ 1\leq i<j\leq n \quad , \quad M^{(n)}_t (i,i)=\frac{\sqrt{2} B^{(n)}_t(i,i)}{\sqrt{n}} \ .$$
Denote by $\{\la_1^{(n)} \leq \ldots \leq \la_n^{(n)}\}_{n\geq 1}$ the corresponding random sequence of eigenvalues, and set $\mu^n_t :=\frac{1}{n} \sum_{i=0}^n \delta_{\la_i^{(n)}}$. Then, for every continuous function $f:\R \to \R$ and every $t\geq 0$, one has a.s.
$$\int_{\R} f(x) \, \mu^{(n)}_t(dx) \stackrel{n\to \infty}{\longrightarrow} \int_{\R} f(x) \, \mu_t(dx) \ ,$$
where $\mu_t$ stands for the semicircular distribution of mean $0$ and variance $t^{2H}$ (see Lemma \ref{lem:law-semicircular}).  
\end{proposition}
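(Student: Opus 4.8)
The plan is to observe that, at any fixed time $t\ge 0$, the random matrix $M^{(n)}_t$ is simply a rescaled matrix of the Gaussian Orthogonal Ensemble, and then to establish the semicircle law through the moment method. Indeed, since each $B^{(n)}(i,j)$ is a fractional Brownian motion, its marginal at time $t$ is a centered Gaussian variable with variance $t^{2H}$; combined with the assumed independence across the pairs $\{i\le j\}$, this shows that the entries $\{M^{(n)}_t(i,j)\}_{i\le j}$ are independent centered Gaussians, with variance $t^{2H}/n$ off the diagonal and $2t^{2H}/n$ on the diagonal. These are exactly the GOE normalizations for which the limiting semicircle distribution has variance $\si^2:=t^{2H}$. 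Note in passing that the fractional nature of the $B^{(n)}(i,j)$ enters only through this fixed-time marginal variance, so that the constraint $H>\frac12$ plays in fact no role in the statement.

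First I would prove convergence of the moments in expectation. Writing $\int_\R x^k\,\mu^{(n)}_t(dx)=\frac1n\,\tr\big((M^{(n)}_t)^k\big)$ and expanding the trace as a sum over closed cycles $(i_0,i_1,\ldots,i_{k-1},i_0)$ in $\{1,\ldots,n\}$, I would take the (Gaussian) expectation of each term via the Wick formula, which pairs up the $k$ edges of the cycle. The leading-order contributions, of order $n^0$, come from the cycles in which every edge is traversed exactly twice and the associated pairing is non-crossing; for $k=2m$ these are enumerated by the Catalan number $C_m$ and each contributes $\si^{2m}=t^{2mH}$, while the odd moments vanish. Crossing pairings, self-pairings of diagonal edges, and the extra $\sqrt2$ weight on the diagonal all contribute at order $O(1/n)$ and hence disappear in the limit. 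This yields $\mathbb{E}\big[\int_\R x^k\,\mu^{(n)}_t(dx)\big]\to\int_\R x^k\,\mu_t(dx)$, the moments of the semicircular law of variance $t^{2H}$.

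Next I would upgrade this to almost-sure convergence and to arbitrary continuous test functions. A variance estimate of the form $\var\big(\frac1n\tr((M^{(n)}_t)^k)\big)=O(n^{-2})$, obtained through a similar cycle count in which only the ``connected'' configurations survive, combined with the Borel--Cantelli lemma, gives the almost-sure convergence of each individual moment, hence --- taking a countable intersection over $k$ --- the simultaneous almost-sure convergence of all moments. Since the semicircle law is compactly supported and therefore determined by its moments, this already provides the almost-sure weak convergence $\mu^{(n)}_t\to\mu_t$. To reach all continuous $f$ rather than only polynomials, I would additionally use that the operator norm of $M^{(n)}_t$ stays almost surely eventually below $2\si+\ep$, so that the measures $\mu^{(n)}_t$ are eventually carried by a fixed compact interval on which $f$ is uniformly approximable by polynomials (Weierstrass).

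The main obstacle is not conceptual --- once the matrix is recognized as a GOE, the statement is essentially the classical Wigner theorem --- but rather the careful combinatorial bookkeeping: verifying that the non-crossing, doubly covered cycles form precisely the Catalan-counted leading term, and that every other configuration (crossings, diagonal self-pairings, and the $\sqrt2$ factor) is genuinely negligible at the relevant order, together with the $O(n^{-2})$ variance bound needed for the Borel--Cantelli step. I would also take some care in justifying the almost-sure eventual boundedness of the top eigenvalue, which is what legitimately bridges the gap from moment convergence to integration against all continuous functions.
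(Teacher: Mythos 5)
Your proof is correct for the statement as it is quoted here, but note that the paper itself offers no proof of this proposition: it is imported verbatim from \cite[Theorem 1]{matrix-approx} as background material, so there is no internal argument to compare against. What you have written is the classical Wigner/GOE moment-method proof, and it does establish the claim: at fixed $t$ the matrix $M^{(n)}_t$ is exactly $t^{H}$ times a standard GOE matrix, the expected moments converge to the Catalan-weighted moments of the semicircle law of variance $t^{2H}$, the $O(n^{-2})$ variance bound plus Borel--Cantelli upgrades this to almost-sure convergence of all moments simultaneously, and the almost-sure eventual bound on the operator norm is indeed the step that legitimately extends the conclusion from polynomials to arbitrary continuous $f$ (which need not be bounded). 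Your observation that the hypothesis $H>\frac12$ is vacuous for this fixed-time statement is also accurate, and it is the most instructive point of divergence from the source: in \cite{matrix-approx} the theorem is part of a process-level analysis in which the fractional structure of the paths genuinely intervenes (the regularity of $t\mapsto B_t$ for $H>\frac12$ is what allows a Young/pathwise treatment of the evolution of the empirical measure), whereas the marginal statement reproduced in this paper only sees the Gaussian law of $B_t$ at a single time. In short: your argument is a more elementary and self-contained route to exactly what is asserted here, at the price of not recovering the stronger functional statement for which the restriction on $H$ was introduced in the first place.
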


\

In the present study, we propose to go ahead with the analysis of the properties of the NC-fBm by adressing another natural question, namely: how to integrate with respect to this process?

\smallskip

Let us slightly specialize the problem by taking account of the algebra setting. Given a NC-fBm $\{X_t\}_{t\in [0,1]}$ in a NC-probability space $(\ca,\vp)$ (with a given Hurst index $H\in (0,1)$) and two paths $Y,Z:[0,1] \to \ca$ (in a class to be determined), we are looking for a natural interpretation of the integral 
\begin{equation}\label{integral-too-general}
\int_s^t Y_u dX_u Z_u \quad  , \quad s,t\geq 0 \ ,
\end{equation}
that would (for instance) extend the existing constructions in the specific free case $H=\frac12$.

\smallskip

In order to achieve this goal (at least to some extent), and in the continuation of the analysis developed in \cite{deya-schott,deya-schott-2}, our strategy will rely on an adaptation of the so-called \enquote{pathwise} methods which have been successfully used in the classical setting to handle integration with respect to the standard fractional Brownian (\cite{coutin-qian,friz-victoir-gauss}). We will thus see how to combine this approach (whether \textit{Young integration} or \textit{rough paths theory}) with the specific topological features of the algebra setting under consideration.

\smallskip

Let us recall, at a very general level, that the \enquote{pathwise} methods are based on a subtle analysis of the local dynamics of the paths under consideration. In particular, the construction of the integral depends in an essential way on the local Hölder regularity of these paths, which, just as in the classical setting, will here be governed by the value of the Hurst index $H$ (see Lemma \ref{lem:holder-regu} below). In brief, the smaller $H$, the rougher the process and the more sophisticated the integration procedure.

\

\textit{Our results regarding the construction of the integral and its properties will be gathered in Section \ref{sec:main-results}. The rest of the paper, i.e. Section \ref{sec:proof-main-result-1} and Section \ref{sec:proof-main-result-2}, is then devoted to the details of the proofs of our two main technical propositions (Proposition \ref{main-proposition} and Proposition \ref{prop:non-convergence}, respectively), that we have postponed for the sake of clarity.}

\

As a conclusion to this introduction, we would like to emphasize the particular position of our study, at the crossroads of two theories (with a priori distinct related \enquote{communities}): non-commutative probability theory and rough paths theory. In this context, and even if our analysis deeply leans on the combination of the two theories, let us point out a few specific aspects of our results that might be of special interest to each \enquote{audience}: 

\smallskip

\noindent
$(i)$ \textit{From a rough-path-expert's perspective}. In the course of the analysis, and more precisely when $H< \frac12$, we will be led to involve a fundamental second-order path into the procedure, which will play the role of a \enquote{L{\'e}vy area} in this setting (such a consideration should not be a surprise to any rough-path user). This object morally corresponds (at least in a simplified version, see Section \ref{subsec:first-rough-case}) to the product iterated integral
\begin{equation}\label{simple-product-levy}
\int_s^t \{X_u-X_s\} dX_u \ ,
\end{equation}
provided we can give a sense to this integral. In the commutative setting, i.e. when $\ca=\R$ and $X=x$ is a one-dimensional fractional Brownian, the interpretation can be immediately derived from the (formal) integration formula
\begin{equation}\label{ipp-commut}
\int_s^t \{x_u-x_s\} dx_u=\frac12 \{x_t-x_s\}^2 \ ,
\end{equation}
which indeed yields a suitable definition for this object, for any $H\in (0,1)$. In the general non-commutative setting, the corresponding integration formula reads (still formally) as
\begin{equation}\label{ipp-non-commut}
\int_s^t \{X_u-X_s\} dX_u=\{X_t-X_s\}^2 - \int_s^t dX_u\{X_u-X_s\} \ ,
\end{equation}
but there is no reason anymore for the two integrals in (\ref{ipp-non-commut}) to be equal, which of course scuttles the simplification procedure.
The situation here is somehow analog to the case of the non-diagonal entries of the classical L{\'e}vy-area matrix above a standard multidimensional fBm, and in fact, we will observe a similar breaking phenomenon when letting the Hurst index $H$ decrease from $1$ to $0$: when $H>\frac14$, we can indeed define (\ref{simple-product-levy}) through a natural approximation procedure (Proposition \ref{main-proposition}), while for $H\leq \frac14$, the very same approximation fails to converge (Proposition \ref{prop:non-convergence}). Note however that this change of regime and this similarity with the standard multidimensional fBm are not behaviours we could have readily expected, because the two objects (the product integral (\ref{simple-product-levy}) and the non-diagonal entries of the classical fractional L{\'e}vy area) are not exactly of the same nature.

\smallskip

\noindent
$(ii)$ \textit{From a non-commutative-expert's perspective}. To the best of our knowledge, stochastic integration in the non-commutative setting is so far limited to the sole free Brownian case (or its $q$-extension, see Section \ref{subsec:q-fbm} below), where it can be seen as the direct counterpart of It{\^o}'s construction. Thus, even if essentially restricted to polynomial integrands, our construction offers a new and clearly non-trivial example of an integral driven by an irregular non-commutative process. In particular, the pathwise methods will allow us to go beyond the \enquote{free independence} condition, just as they allow to go beyond the martingale framework in the classical setting. Besides, as we will see it in Remark \ref{rk:commutator}, the study of the (simplified) product L{\'e}vy area (\ref{simple-product-levy}) happens to be closely related to the behaviour of the commutator $\big[X_s,X_t\big]:=X_sX_t-X_tX_s$, as $s,t$ are getting close to each other (a property which, to some extent, can be guessed from (\ref{ipp-non-commut})). Accordingly, through the integration issue, we will also be led to test the \enquote{local commutation default} of the process and offer another interesting interpretation of the above-mentionned change of regime: in brief, when $H\leq \frac14$, the NC-fBm becomes \enquote{too non-commutative} for the sum $\sum_{(t_i)} \big[X_{t_i},X_{t_{i+1}}\big]$ of local commutators to converge in $\ca$ (as the mesh of the subdivision $(t_i)$ tends to $0$).

\

\textit{As far as the presentation of our results is concerned, please note our intention to make the subsequent analysis easily accessible to both the rough-path and the non-commutative \enquote{communities}. For this reason, we have endeavored to make the study as self-contained as possible.}

\

\section{Integration with respect to the NC-fractional Brownian motion}\label{sec:main-results}

This section accounts for our main results, along the following organization. First, we will specify our aims and expectations regarding the construction of the integral, and recall some basic technical tools from pathwise integration theory. Then we will turn to the definition of the integral, depending on the Hurst index $H$ of the process: Young-type construction when $H>\frac12$, It{\^o}-type (or Stratonovich-type) construction when $H=\frac12$, rough-path-type construction when $H<\frac12$. Finally, we will point out some possible extensions and some limits of our approach.

\smallskip

\textit{From now on and for the rest of the section, we fix a NC-probability space $(\ca,\vp)$ and consider a NC-fBm $\{X_t\}_{t\geq 0}$ of Hurst index $H\in (0,1)$ on this space. Besides, for more simplicity, we will restrict the subsequent considerations and constructions to the time interval $[0,1]$, but the results could be readily extended to any interval $[0,T]$, $T>0$.}

\subsection{Objectives}\label{subsec:objectives}
It is a well-known and natural fact that the difficulty in constructing an integral is often correlated with the \enquote{roughness} of its driving path. In the case of the NC-fBm, and just as in the case of the standard fBm, we can easily quantify this (ir)regularity along the classical Hölder scale:

\begin{lemma}\label{lem:holder-regu}
For all $0\leq s\leq t \leq 1$, it holds that
\begin{equation}\label{holder-regu}
\|X_t-X_s\| = 2 \, |t-s|^H \ .
\end{equation}
\end{lemma}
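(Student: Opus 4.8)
The plan is to exploit the semicircular nature of the increment together with the norm--trace correspondence \eqref{trace-norm-2}. First I would observe that $X_t-X_s$ is self-adjoint, being a real linear combination of the self-adjoint elements $X_s$ and $X_t$, and that by the elementary stability result stated just above (semicircular vectors are preserved under real linear maps) it is again a semicircular random variable. Lemma \ref{lem:law-semicircular} then applies: the law $\nu_{X_t-X_s}$ is the semicircular distribution of variance $\si^2:=\vp\big((X_t-X_s)^2\big)$, and in particular it is supported on the interval $[-2\si,2\si]$.

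Second, I would compute this variance directly from \eqref{cova-NC-fBm}. Expanding via the trace property $\vp(X_sX_t)=\vp(X_tX_s)$ gives $\vp\big((X_t-X_s)^2\big)=\vp(X_t^2)-2\vp(X_sX_t)+\vp(X_s^2)$, and inserting \eqref{cova-NC-fBm} together with the diagonal values $\vp(X_t^2)=t^{2H}$ and $\vp(X_s^2)=s^{2H}$, all the $s^{2H}$ and $t^{2H}$ contributions cancel and one is left with $\vp\big((X_t-X_s)^2\big)=|t-s|^{2H}$. Hence $\si=|t-s|^{H}$.

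Third -- and this is the only genuine point -- I would convert the norm \eqref{trace-norm-2} into the right endpoint $2\si$ of the support. Since $X_t-X_s$ is self-adjoint, formula \eqref{trace-norm-2} reads $\|X_t-X_s\|=\lim_{r\to\infty}\vp\big((X_t-X_s)^{2r}\big)^{1/(2r)}$, and by the moment representation \eqref{law} we have $\vp\big((X_t-X_s)^{2r}\big)=\int_{\R}x^{2r}\,\nu_{X_t-X_s}(dx)$. The right-hand side is the $2r$-th power of the $L^{2r}(\nu_{X_t-X_s})$-norm of the function $x\mapsto|x|$, so the limit equals the essential supremum of $|x|$ under $\nu_{X_t-X_s}$; because the semicircular density $p_{\si^2}$ is strictly positive on the open interval $(-2\si,2\si)$, this essential supremum is exactly $2\si=2|t-s|^{H}$, which is \eqref{holder-regu}.

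The main (and essentially only) obstacle I anticipate is justifying that $\lim_{r\to\infty}\big(\int x^{2r}\,\nu(dx)\big)^{1/(2r)}$ equals the right endpoint of the support. This is the standard fact that $L^p$-norms increase to the $L^\infty$-norm on a probability space as $p\to\infty$, but it does rely on the density reaching up to the edge, so that no mass is lost near $\pm 2\si$; the square-root vanishing of $p_{\si^2}$ at the endpoints affects the rate but not the essential supremum, so it causes no real difficulty. Alternatively, one could bypass the measure-theoretic computation altogether by recalling that for a self-adjoint element the quantity \eqref{trace-norm-2} is the spectral radius, and that the spectrum of a semicircular variable is precisely the support $[-2\si,2\si]$ of its law.
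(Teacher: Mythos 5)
Your argument is correct, but it takes a genuinely different route from the paper's. The paper applies the Wick formula (\ref{form-wick}) directly to the increment to get $\vp\big((X_t-X_s)^{2r}\big)=|NC_2(2r)|\,|t-s|^{2Hr}$, and then concludes via the Catalan-number asymptotics $|NC_2(2r)|^{1/(2r)}\to 2$ combined with (\ref{trace-norm-2}). You instead pass through the law of the increment: stability of the semicircular property under real linear maps plus Lemma \ref{lem:law-semicircular} identify $\nu_{X_t-X_s}$ as the semicircle distribution of variance $\sigma^2=|t-s|^{2H}$ (your variance computation from (\ref{cova-NC-fBm}) is correct), and then (\ref{trace-norm-2}) becomes the statement that the $L^{2r}$-norms of $x\mapsto |x|$ under $\nu_{X_t-X_s}$ converge to the essential supremum $2\sigma$. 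The two arguments are close cousins -- the Catalan asymptotic $|NC_2(2r)|^{1/(2r)}\to 2$ \emph{is} the statement that the even moments of the standard semicircle law detect the endpoint $2$ of its support -- but yours trades the explicit combinatorial computation for the standard $L^p\to L^\infty$ fact (or, as you note, for the identification of (\ref{trace-norm-2}) with the spectral radius of a self-adjoint element). Your route is more conceptual and applies verbatim to any self-adjoint element whose law is known; the paper's is more self-contained, needing only the covariance and the Catalan asymptotics rather than the full identification of the law. The one step you flag as delicate -- that no mass is lost near $\pm 2\sigma$ -- is indeed handled by the strict positivity of $p_{\sigma^2}$ on the open interval, so there is no gap.
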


\begin{proof}
This is an elementary argument, but we provide it for the non-initiated reader as an illustration of the specific topological property (\ref{trace-norm-2}). Observe indeed that, combining (\ref{form-wick}) and (\ref{cova-NC-fBm}), one has immediately, for all $0\leq s\leq t \leq 1$ and $r\geq 1$,
$$\vp\big( (X_t-X_s)^{2r} \big)=|NC_2(2r)|\, \vp\big( (X_t-X_s)^2 \big)^r= |NC_2(2r)|\, |t-s|^{2Hr} \ .$$
Now recall that the cardinal $|NC_2(2r)|$ of the set of the non-crossing pairings of $\{1,\ldots,2r\}$ is given by the $r$-th Catalan number, whose asymptotic behaviour is well-known and yields $|NC_2(2r)|^{1/(2r)} \to 2$ as $r\to \infty$. We are therefore in a position to apply (\ref{trace-norm-2}) and derive (\ref{holder-regu}).
\end{proof}

Going back to the above discussion, and with property (\ref{holder-regu}) in hand, we can thus expect the analysis to obey the following general principles: the smaller $H$, the more irregular the process and the more difficult the construction. In fact, as a lesson from the pathwise approach in the classical probability setting, we can expect the most serious difficulties (and accordingly the most interesting problems) to arise as soon as $H\leq \frac12$.

In order to be able to go below this fundamental $\frac12$ threshold, we will restrict our attention to a relatively simple class of integrands $Y,Z$ in (\ref{integral-too-general}), namely polynomial functions of $X$, which obviously makes sense in this algebra setting. Let us therefore rephrase our objective as follows: given two polynomials $P,Q$, how to \enquote{naturally} and \enquote{efficiently} define the integral
\begin{equation}\label{integral-general}
\int_s^t P(X_u) \mathrm{d}X_u Q(X_u) \quad , \quad 0\leq s\leq t\leq 1 \ \quad ?
\end{equation}

Note that, even if restricted to polynomial integrands, this question remains far from trivial. Consider for instance the case of the elementary integral $\int_s^t X_u \, dX_u$. Following the standard Lebesgue (or Stieltjes) procedure, we could be tempted to define this object as the limit of the Riemann sums $\sum_{(t_i)\in \Delta_{s,t}} X_{t_i} \{X_{t_{i+1}}-X_{t_i}\}$, for any subdivision $\Delta_{s,t}$ of $[s,t]$ whose mesh tends to $0$. And yet, taking e.g. the basic sequence $t_i^n:=\frac{i}{n}$, it can be checked from (\ref{cova-NC-fBm}) that, just as in the commutative case,
\begin{equation}\label{div-riem-sums}
\vp\Big( \sum_{i=0}^{n-1} X_{t_i} \{X_{t_{i+1}}-X_{t_i}\}\Big)=\frac12 (1-n^{1-2H})\ ,
\end{equation}
which tends to infinity as soon as $H<\frac12$, ruling out the standard Stieltjes procedure as a general way to define the integral in (\ref{integral-general}).

\smallskip

Before we turn to the presentation of our results, let us slightly elaborate on the few specifications we shall keep in mind regarding the desired integral, for a both natural and efficient definition:

\smallskip

\noindent
$(a)$ We would like this interpretation to be relatively \enquote{intrinsic}, that is to depend on $X$ only, and not on some approximation of the process or some particular sequence of subdivisions of the time interval.  

\smallskip

\noindent
$(b)$ We expect the resulting integral to obey natural differentiation rules, such that for instance
$$X^2_t-X^2_s=\int_s^t X_u \, dX_u +\int_s^t dX_u \, X_u  \ ,$$
or analog It{\^o}-type formulas.

\smallskip

\noindent
$(c)$ As far as possible, we would like the construction to appear as the limit of the standard (Lebesgue) construction, and the integral in (\ref{integral-general}) to appear as the limit of the standard (Lebesgue) integral 
$$\int_s^t P(X^{(n)}_u) \mathrm{d}X^{(n)}_u Q(X^{(n)}_u) \ ,$$
where $X^{(n)}$ is a sequence of smooth paths that converges to $X$ as $n\to \infty$. For a clear expression  of this robustness (or Wong-Zakaï-type) property, we will refer in the sequel to the \enquote{canonical} sequence derived from the linear interpolation of $X$ along the dyadic subdivision of $[0,1]$. Thus, for the rest of the section, we set $t_i^n:=\frac{i}{2^n}$ ($i=0,\ldots,2^n$) and denote by $\{X^{(n)}_t\}_{n\geq 0,t\in [0,T]}$ the sequence defined as
\begin{equation}\label{approx-x}
X^{(0)}_t=tX_1 \quad , \quad X^{(n)}_t:=X_{t^n_i}+2^n (t-t^n_i) \{ X_{t^n_{i+1}}-X_{t^n_i }\} \quad \text{for} \ n\geq 1 \ \text{and} \ t\in [t^n_i,t^n_{i+1}]\ .
\end{equation}
Observe that the convergence of $X^{(n)}$ to $X$ is a straightforward consequence of the $H$-Hölder regularity of $X$. Using (\ref{holder-regu}), we get more precisely: 
\begin{lemma}\label{lem:fir-or}
For all $n\geq 0$, $\varepsilon\in (0,H)$ and $0\leq s\leq t\leq 1$, it holds that
\begin{equation}\label{fir-or-pr}
\| X^{(n)}_{t}-X^{(n)}_{s}\|\leq 6  |t-s|^{H}  \quad \text{and} \quad \| (X^{(n)}-X)_t-(X^{(n)}-X)_s\|\leq 8  |t-s|^{H-\varepsilon} 2^{-n\varepsilon}\ .
\end{equation}
\end{lemma}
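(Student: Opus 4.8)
The plan is to exploit two structural features of the dyadic interpolant $X^{(n)}$: its explicit linear form on each cell $[t^n_i,t^n_{i+1}]$, and the fact that it coincides with $X$ at every dyadic node $t^n_i$. The only regularity input needed is Lemma \ref{lem:holder-regu}, i.e. $\|X_t-X_s\|=2|t-s|^H$, together with the elementary inequality $a\leq a^H$ valid for $a\in[0,1]$ (since $H<1$). I write $R^{(n)}:=X^{(n)}-X$ throughout, and note at the outset that $R^{(n)}$ vanishes at each node, $R^{(n)}_{t^n_i}=0$.

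First I would establish both bounds when $s,t$ lie in a common cell $[t^n_i,t^n_{i+1}]$ (for $n\geq 1$). From the definition (\ref{approx-x}) one has $X^{(n)}_t-X^{(n)}_s=2^n(t-s)\{X_{t^n_{i+1}}-X_{t^n_i}\}$, so Lemma \ref{lem:holder-regu} gives $\|X^{(n)}_t-X^{(n)}_s\|=2^{n+1}|t-s|\,2^{-nH}$. Since $2^n|t-s|\leq 1$ on a single cell, the inequality $a\leq a^H$ yields $2^n|t-s|\leq 2^{nH}|t-s|^H$, whence $\|X^{(n)}_t-X^{(n)}_s\|\leq 2|t-s|^H$. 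Adding $\|X_t-X_s\|=2|t-s|^H$ gives $\|R^{(n)}_t-R^{(n)}_s\|\leq 4|t-s|^H$; and because $|t-s|\leq 2^{-n}$ inside a cell, $|t-s|^H\leq |t-s|^{H-\varepsilon}2^{-n\varepsilon}$ for every $\varepsilon\in(0,H)$, so in fact $\|R^{(n)}_t-R^{(n)}_s\|\leq 4|t-s|^{H-\varepsilon}2^{-n\varepsilon}$.

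Next I would treat general $0\leq s\leq t\leq 1$ by a telescoping argument. Suppose $s\in[t^n_j,t^n_{j+1}]$ and $t\in[t^n_k,t^n_{k+1}]$ with $j<k$ (the case $j=k$ being already covered, as $2\leq 6$ and $4\leq 8$). For $F$ equal to either $X^{(n)}$ or $R^{(n)}$, I decompose
$$F_t-F_s=(F_t-F_{t^n_k})+(F_{t^n_k}-F_{t^n_{j+1}})+(F_{t^n_{j+1}}-F_s),$$
noting that $s\leq t^n_{j+1}\leq t^n_k\leq t$, so $|t-t^n_k|\leq|t-s|$ and $|t^n_{j+1}-s|\leq|t-s|$, and that the two outer increments each join points of a common cell. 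For $F=X^{(n)}$, the middle term equals $X_{t^n_k}-X_{t^n_{j+1}}$ by nodal agreement and is bounded by $2|t-s|^H$, while the two outer terms are each $\leq 2|t-s|^H$ by the cell estimate; summing gives the first claim, $\|X^{(n)}_t-X^{(n)}_s\|\leq 6|t-s|^H$. For $F=R^{(n)}$, the decisive point is that the middle term $R^{(n)}_{t^n_k}-R^{(n)}_{t^n_{j+1}}$ \emph{vanishes}, since $R^{(n)}$ is zero at every node; the two outer terms are each $\leq 4|t-s|^{H-\varepsilon}2^{-n\varepsilon}$ by the cell estimate (here $H-\varepsilon>0$ is used to replace the partial cell-lengths by $|t-s|$), giving the second claim $\|R^{(n)}_t-R^{(n)}_s\|\leq 8|t-s|^{H-\varepsilon}2^{-n\varepsilon}$. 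The base case $n=0$ is handled directly: $X^{(0)}_t-X^{(0)}_s=(t-s)X_1$ has norm $2|t-s|\leq 2|t-s|^H$, and $R^{(0)}$ has increment $(t-s)X_1-(X_t-X_s)$ of norm at most $4|t-s|^H\leq 4|t-s|^{H-\varepsilon}$, both within the stated constants since $2^{-n\varepsilon}=1$.

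The only genuinely delicate point is the remainder estimate in the telescoping step: the gain of the decaying factor $2^{-n\varepsilon}$ rests entirely on the middle full-span increment of $R^{(n)}$ collapsing to zero through nodal agreement, so that only the two fractional end-cells — each of length at most $2^{-n}$ — remain to absorb the extra $2^{-n\varepsilon}$. Everything else is routine bookkeeping with Lemma \ref{lem:holder-regu} and the inequality $a\leq a^H$.
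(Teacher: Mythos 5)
Your proof is correct, and it supplies exactly the argument the paper leaves implicit (the lemma is stated there without proof, as a "straightforward consequence" of (\ref{holder-regu})). The single-cell estimate via $a\leq a^H$ for $a=2^n|t-s|\in[0,1]$, the telescoping through the nearest dyadic nodes, and the observation that $X^{(n)}-X$ vanishes at those nodes (which is what produces the $2^{-n\varepsilon}$ gain) are precisely the standard route, and your constant bookkeeping matches the stated $6$ and $8$.
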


\

Now, as a preliminary step of our construction strategy, we need to remind the reader with a few elementary results from pathwise integration theory, as developed in \cite{gubi}, and that we directly specialize to the algebra $\ca$ under consideration.

\subsection{Technical tools from pathwise integration theory}
For $k\in \{1,2,3\}$, we set $\cs_k:=\{(t_1,\ldots,t_k) \in [0,1]^k: \ t_1 \leq \ldots \leq t_k\}$ and denote by $\cac_k([0,1];\ca)$ the set of continuous maps $g: \cs_k \to \ca$ vanishing on diagonals (i.e., $g_{t_1 \ldots t_k}=0$ when two times $t_i,t_j$ with $i\neq j$ are equal).

\smallskip

Then we define the increment operator $\delta$ along the formulas: for $g:[0,1] \to \ca$, $(\der g)_{st}:=g_t-g_s$ ($0\leq s\leq t\leq 1$), while for $h:\cs_2 \to \ca$, $(\der h)_{sut}:=h_{st}-h_{su}-h_{ut}$ ($0\leq s\leq u\leq t\leq 1$). 

\smallskip

The two basic results at the core of pathwise integration theory can now be stated as follows:

\begin{lemma}\label{lem:coho}
Let $h: \cs_2 \to \ca$ be a map such that for all $0\leq s\leq u\leq t\leq 1$, $(\delta h)_{sut}=0$. Then there exists a path $g:[0,1]\to \ca$ such that $h=\delta g$.
\end{lemma}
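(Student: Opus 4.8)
The plan is to recover $g$ by \emph{integrating} the increment $h$ from the left endpoint of the interval, which is the natural guess since the hypothesis $(\delta h)_{sut}=0$ is precisely the cocycle (additivity) relation $h_{st}=h_{su}+h_{ut}$. Concretely, I would simply set
\begin{equation*}
g_t := h_{0t} \qquad (t\in [0,1]) \ .
\end{equation*}
Because $h$ is assumed continuous on $\cs_2$ and vanishes on diagonals, the map $t\mapsto h_{0t}$ is a continuous path $[0,1]\to \ca$ with $g_0=h_{00}=0$, so $g$ is a legitimate candidate.

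The whole content of the lemma then reduces to a single specialization of the hypothesis. Indeed, writing out $(\delta h)_{sut}=0$ as $h_{st}=h_{su}+h_{ut}$ and applying it to the admissible triple $(0,s,t)$ (where $0\le s\le t\le 1$) yields
\begin{equation*}
h_{0t}=h_{0s}+h_{st} \ ,
\end{equation*}
whence $h_{st}=h_{0t}-h_{0s}=g_t-g_s=(\delta g)_{st}$. This is exactly the desired identity $h=\delta g$, valid for all $0\le s\le t\le 1$.

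I would stress that, in contrast with the companion \enquote{sewing} statement (the construction of the $\Lambda$ map, which produces an element out of an \emph{approximate} cocycle), there is no genuine obstacle here: no limiting or fixed-point argument is needed, and the proof is purely algebraic. The only points deserving a line of care are that $g$ is a well-defined continuous path — inherited directly from the continuity of $h$ — and that the triple $(0,s,t)$ used above indeed lies in the domain of $h$, which holds automatically since $0\le s\le t$. For completeness one may also note that such a $g$ is unique up to an additive constant in $\ca$ (as $\delta$ annihilates constants), although the statement only requires existence.
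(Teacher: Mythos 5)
Your proof is correct and is exactly the standard argument: the paper states this lemma without proof (deferring to \cite{gubi}), and the canonical construction there is precisely your choice $g_t:=h_{0t}$ together with the specialization of the cocycle identity to the triple $(0,s,t)$. Nothing is missing.
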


\begin{lemma}[Sewing lemma \cite{gubi}]\label{lemma-lambda}
Let $h:\cs_3 \to \ca$ be a map in $\text{Im} \,\, \der$ (i.e. $h=\der g$ for $g:\cs_2 \to \ca$) such that for all $0\leq s\leq u\leq t\leq 1$, 
$$\|h_{sut}\|\leq C_h\, |t-s|^{\mu} \ ,$$
for some constant $C_h>0$ and some parameter $\mu>1$. Then there exists a unique element $\Lambda h\in \cac_2^{\mu}([0,1];\ca)$ such that $\delta(\Lambda h)=h$. Besides, for all $0\leq s\leq t\leq 1$, one has
\begin{eqnarray} \label{contraction}
\| (\Lambda h)_{st}\|\leq c_\mu C_h\, |t-s|^{\mu}  ,
\end{eqnarray}
where $c_\mu :=2+2^\mu \sum_{k=1}^\infty k^{-\mu}$.
\end{lemma}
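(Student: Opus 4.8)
The plan is to realize $\Lambda h$ as the correction needed to turn the given primitive $g$ (recall that $h=\der g$ by hypothesis) into an \emph{additive} $2$-increment, and to control this correction by a Young-type successive-point-removal estimate. Since $\ca$ is a Banach space, the whole scheme rests on extracting a limit of Riemann-type sums of $g$, so completeness of $\ca$ will be used in an essential way.

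First I would fix $0\le s\le t\le 1$ and, for a partition $\Pi=\{s=r_0<r_1<\cdots<r_N=t\}$ of $[s,t]$, form the Riemann sum $J_\Pi:=\sum_{i=0}^{N-1} g_{r_i r_{i+1}}$. The key elementary identity is that dropping one interior point $r_j$ changes the sum by exactly $-h_{r_{j-1}r_j r_{j+1}}$, since $g_{r_{j-1}r_j}+g_{r_j r_{j+1}}-g_{r_{j-1}r_{j+1}}=-(\der g)_{r_{j-1}r_j r_{j+1}}=-h_{r_{j-1}r_j r_{j+1}}$. A pigeonhole argument guarantees an interior point with $r_{j+1}-r_{j-1}\le 2|t-s|/(N-1)$, so removing points one at a time down to the trivial partition $\{s,t\}$ and invoking $\|h_{sut}\|\le C_h|t-s|^\mu$ yields $\|J_\Pi-g_{st}\|\le c_\mu\,C_h|t-s|^\mu$ with $c_\mu=2+2^\mu\sum_{k\ge1}k^{-\mu}$, the series converging precisely because $\mu>1$; a careful bookkeeping of the interval lengths produced at each removal is what pins down this explicit constant.

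Next I would upgrade this uniform bound to genuine convergence: comparing a partition with a refinement, the difference is a sum of removal-errors localized on the subintervals, each controlled by $C_h(\text{mesh})^{\mu-1}$ times the subinterval length, so of total size $O\big((\text{mesh})^{\mu-1}|t-s|\big)\to 0$. Completeness of $\ca$ then produces a limit $J_{st}:=\lim_{|\Pi|\to 0}J_\Pi$, independent of the chosen sequence of partitions. Inserting an intermediate time $v$ into the partitions shows $J_{sv}+J_{vt}=J_{st}$, i.e. $\der J=0$, and I set $(\Lambda h)_{st}:=g_{st}-J_{st}$. Then $\der(\Lambda h)=\der g-\der J=h$, while the estimate of the previous step gives $\|(\Lambda h)_{st}\|=\|g_{st}-J_{st}\|\le c_\mu C_h|t-s|^\mu$; this same bound shows $\Lambda h$ vanishes on the diagonal and, together with continuity inherited from the construction, places $\Lambda h$ in $\cac_2^\mu([0,1];\ca)$.

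Finally, uniqueness is the soft part: if $M,M'$ both satisfy $\der M=\der M'=h$ with $\mu$-H\"older bounds, then $D:=M-M'$ has $\der D=0$, hence is additive, so over the uniform partition $r_i=s+i(t-s)/n$ one may write $D_{st}=\sum_{i=0}^{n-1}D_{r_i r_{i+1}}$ and estimate $\|D_{st}\|\le n\,K(|t-s|/n)^\mu=K|t-s|^\mu n^{1-\mu}\to 0$ as $n\to\infty$, forcing $D=0$. The main obstacle, as always with the sewing lemma, is the quantitative removal estimate of the second paragraph: one must choose \emph{which} interior point to drop at each stage and track the resulting lengths so that the accumulated error sums against $\sum_k k^{-\mu}$ rather than diverging. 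Once that estimate is secured, convergence, additivity, and uniqueness all follow routinely.
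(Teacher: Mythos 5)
Your proof is correct and follows the classical route: the paper gives no proof of this lemma (it is quoted from \cite{gubi}), and your Young-type point-removal argument --- the uniform bound on $\|J_\Pi-g_{st}\|$ via the pigeonhole choice of an interior point, the Cauchy-in-mesh estimate obtained by comparing with common refinements, the additivity of the limit $J$, the definition $\Lambda h=g-J$, and the uniqueness step from $n^{1-\mu}\to 0$ --- is precisely the standard argument being invoked there. The only cosmetic remark is that the pure removal scheme produces the constant $2^\mu\sum_{k\geq 1}k^{-\mu}$, which is dominated by the stated $c_\mu$, so the bound \eqref{contraction} holds with room to spare.
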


In order to efficiently combine these two lemmas within an integration procedure, we will also need the integrands (and their expansions) to satisfy suitable estimates. In the polynomial setting we restrict to, such estimates can be easily verified, but let us label them for further reference.

\smallskip

At first order, one has trivially, for every polynomial $P$ and all $U,V,Y\in \ca$,  
\begin{equation}\label{basic-ineq-1}
\big\| P(V)-P(U)\big\| \leq c_P\, \big( 1+\|U\|+\|V\| \big)^{p-1} \| V-U\|  \ .
\end{equation}
For a convenient expression of the corresponding second-order bound, let us introduce the following additional notations, borrowed from \cite{biane-speicher}, and that we will also use in our forthcoming expansions (see for instance (\ref{formal-second-order-exp})). First, for all $U,V,Y\in \ca$, we set
\begin{equation}\label{sharp-not}
(U\otimes V)\sharp Y=Y\sharp (U\otimes V):=UYV \ .
\end{equation}
Then, given a polynomial function $P(x):=\sum_{k=0}^p a_i \, x^i$ and an element $U\in \ca$, the \emph{tensor derivative} of $P$ (at $U$) is the element of the algebraic tensor product $\ca\otimes \ca$ defined as
$$\partial P(U):=\sum_{k=1}^d a_k\sum_{i=0}^{k-1} U^i \otimes U^{k-1-i}  \ .$$
Combining these two notations, the second-order control we shall rely on in the sequel can be written as
\begin{equation}\label{basic-ineq-2}
\big\| P(V)-P(U) -\partial P(U) \sharp \{V-U\} \big\| \leq c_P \, \big( 1+\|U\|+\|V\| \big)^{p-2} \|V-U\|^2  \   .
\end{equation}
This is of course nothing but a basic application of the classical Taylor estimates (in a normed algebra setting).


\subsection{The case $H>\frac12$: Young integral}\label{subsec:young}

\

\smallskip

We can finally start off our construction strategy, by focusing first on the situation where the Hurst index $H$ of the process is strictly larger than $\frac12$. Still keeping property (\ref{holder-regu}) in mind, we are thus dealing here with a \enquote{not too rough} process. In fact, this situation could be encompassed within the general framework of the so-called \textit{Young integration theory}, that springs from the seminal paper \cite{young} and readily extends the classical Stieltjes interpretation (see also \cite{lyons-book} for a thorough account on the related results, in a general Banach space). Nevertheless, for the sake of completeness, and also as a way to set the stage for the rougher situations, we prefer to give a full treatment of the problem in the specific setting we are interested in. In addition, these few details on the \enquote{Young} situation will allow us to provide the non-initiated reader with a first example of the possibilities offered by the two Lemmas \ref{lem:coho} and \ref{lemma-lambda}.

\smallskip

Our main result here reads as follows (let us recall that $X^{(n)}$ stands for the approximation of $X$ defined through (\ref{approx-x}), and that any integral driven by $X^{(n)}$ is interpreted in the classical Lebesgue sense): 

\begin{proposition}\label{prop:int-young}
Assume that $H>\frac12$. Then, for all polynomials $P,Q$, all times $0\leq s\leq t \leq 1$ and every subdivision $\Delta_{st} = \{t_0=s<t_1 <\ldots<t_\ell=t\}$ of $[s,t]$ with mesh $|\Delta_{st}|$ tending to $0$, the Riemann sum
\begin{equation}\label{riemann}
\sum_{t_i\in \Delta_{st}}  P(X_{t_i})\der X_{t_it_{i+1}}Q(X_{t_i})
\end{equation}
converges in $\ca$ as $|\Delta_{st}| \to 0$. The limit provides us with a natural interpretation of the integral $\int_s^t P(X_u) d X_u Q(X_u)$, and is such that for all $n\geq 0$ and $\varepsilon \in [0,2H-1)$,
\begin{equation}\label{resu-conv-you}
\bigg\| \int_s^t P(X^{(n)}_u) dX^{(n)}_u Q(X^{(n)}_u)-\int_s^t P(X_u) dX_u Q(X_u)\bigg\| \leq c_{H,P,Q,\varepsilon} |t-s|^{H-\varepsilon} 2^{-n\varepsilon} \ ,
\end{equation}
for some constant $c_{H,P,Q,\varepsilon} >0$. As a result, one has, based on this interpretation, 
\begin{equation}\label{ito-young}
\der P(X)_{st}=\int_s^t \partial P(X_u) \sharp dX_u \ .
\end{equation}
\end{proposition}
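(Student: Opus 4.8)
The plan is to combine the Sewing Lemma (Lemma 2.9) with the polynomial estimates (2.22)--(2.25), following the standard $\delta$-algebra strategy. First I would define the local approximation of the integral on each subinterval $[s,t]$ as the naive Riemann increment
\begin{equation}\label{plan-j}
J_{st} := P(X_s)\, \der X_{st}\, Q(X_s)\ ,
\end{equation}
which is the building block of the Riemann sum \eqref{riemann}. The whole point of the Sewing Lemma is that $J$ is \emph{not} itself an increment, but its coboundary $\der J$ is small enough to be corrected. So the first computation I would carry out is $(\der J)_{sut}$ for $0\le s\le u\le t\le 1$. Expanding using the Leibniz-type rule for $\der$ on products, the leading terms cancel and one is left with
\begin{equation}\label{plan-dJ}
(\der J)_{sut}=-\big\{P(X_u)-P(X_s)\big\}\der X_{ut}Q(X_u)-P(X_s)\der X_{ut}\big\{Q(X_u)-Q(X_s)\big\}\ .
\end{equation}

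Next I would estimate $\|(\der J)_{sut}\|$. Using the first-order polynomial bound \eqref{basic-ineq-1} on the differences $P(X_u)-P(X_s)$ and $Q(X_u)-Q(X_s)$, together with the H\"older regularity $\|\der X_{ut}\|=2|t-u|^H\le 2|t-s|^H$ and $\|X_u-X_s\|=2|u-s|^H\le 2|t-s|^H$ from Lemma 2.5, each term in \eqref{plan-dJ} is bounded by a constant times $|t-s|^{2H}$. Since $2H>1$ when $H>\tfrac12$, the hypothesis $\mu=2H>1$ of Lemma 2.9 is satisfied. I would therefore apply the Sewing Lemma to $h:=\der J$ (which lies in $\mathrm{Im}\,\der$ by construction) to produce a unique $\Lambda(\der J)\in\cac_2^{2H}([0,1];\ca)$, and define the integral by
\begin{equation}\label{plan-integral}
\int_s^t P(X_u)\, dX_u\, Q(X_u):=J_{st}-(\Lambda \der J)_{st}\ .
\end{equation}
Because $\der\big(J-\Lambda\der J\big)=\der J-\der J=0$, the right-hand side is a genuine increment, and the contraction bound \eqref{contraction} gives $\|(\Lambda\der J)_{st}\|\le c_{2H}C_h|t-s|^{2H}$; this smallness is exactly what forces the Riemann sums \eqref{riemann} to converge to \eqref{plan-integral} as $|\Delta_{st}|\to0$, by a telescoping argument in which the correction terms vanish in the limit.

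The robustness estimate \eqref{resu-conv-you} I would obtain by running the \emph{same} argument for the difference process. One applies the construction to both $X$ and $X^{(n)}$, writes the integrands' difference through $P(X^{(n)})-P(X)$ and $Q(X^{(n)})-Q(X)$, and controls these with \eqref{basic-ineq-1} combined with the refined second estimate in Lemma 2.7, namely $\|(X^{(n)}-X)_t-(X^{(n)}-X)_s\|\le 8|t-s|^{H-\ep}2^{-n\ep}$. Since the construction is linear in the relevant increments and the Sewing Lemma output depends continuously (via \eqref{contraction}) on the $C_h$ bound, one gets a coboundary of order $|t-s|^{2H-\ep}$ carrying the decay factor $2^{-n\ep}$; applying $\Lambda$ and evaluating on $[s,t]$ yields \eqref{resu-conv-you} for $\ep\in[0,2H-1)$, the range that keeps the exponent $2H-\ep$ strictly above $1$. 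Finally, the It\^o/differentiation formula \eqref{ito-young} follows by recognizing that, at first order, $P(X_u)\der X_{uv}Q(X_u)$ with $P\otimes Q$ replaced by the tensor derivative gives $\der P(X)_{st}=\sum_i\der(X^i)_{st}$ telescoping into $\partial P(X_u)\sharp dX_u$; concretely one checks that $\der P(X)_{st}-\int_s^t\partial P(X_u)\sharp dX_u$ has a coboundary of order $|t-s|^{2H}$ and vanishes on the diagonal, hence is zero by the uniqueness clause of Lemma 2.9.

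\textbf{Main obstacle.} The routine parts are the $\der$-computation \eqref{plan-dJ} and the H\"older bounds; the genuinely delicate step is the robustness estimate \eqref{resu-conv-you}. There one must track \emph{two} regularity scales simultaneously --- the ambient $H$-regularity and the $2^{-n\ep}$ decay of the difference $X^{(n)}-X$ --- and verify that the constant produced by the Sewing Lemma's contraction \eqref{contraction} does not blow up as $\ep\uparrow 2H-1$, which is why the admissible range is the half-open interval $[0,2H-1)$ rather than its closure.
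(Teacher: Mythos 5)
Your proposal is correct and follows essentially the same route as the paper: same germ $P(X_s)\der X_{st}Q(X_s)$, same coboundary computation and $|t-s|^{2H}$ bound via \eqref{basic-ineq-1} and \eqref{holder-regu}, same application of the Sewing Lemma and the same two-scale argument (ambient $H$-regularity plus the $2^{-n\ep}$ decay from Lemma \ref{lem:fir-or}, with $\ep<2H-1$ keeping the sewing exponent above $1$) for the robustness estimate \eqref{resu-conv-you}. The only minor divergence is \eqref{ito-young}, which you derive directly from the uniqueness clause of the Sewing Lemma applied to $\der P(X)_{st}-\partial P(X_s)\sharp\der X_{st}$ (valid, via \eqref{basic-ineq-2}), whereas the paper simply passes to the limit in the classical chain rule for the smooth approximations $X^{(n)}$ using \eqref{resu-conv-you}.
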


\begin{proof}
Set $M_{st}:=P(X_s) \der X_{st} Q(X_s)$, and for all $0\leq s\leq u\leq t\leq 1$, expand the increment $\der M_{sut}$ as
\begin{equation}\label{expansion-der-m-young}
\der M_{sut}=-\der P(X)_{su} \der X_{ut} Q(X_s)-P(X_u) \der X_{ut} \der Q(X)_{su} \ ,
\end{equation}
so that, combining (\ref{holder-regu}) and (\ref{basic-ineq-1}) , we immediately obtain $\| \der M _{sut}\big\| \leq c\, |t-s|^{2H}$. Since $2H>1$, we are in a position to apply the sewing application $\Lambda$ (defined in Lemma \ref{lemma-lambda}) to $\der M$, and using Lemma \ref{lem:coho}, we can then guarantee the existence of a path $\cj :[0,1]\to \ca$ such that $\der \cj_{st}=M_{st}-\Lambda(\der M)_{st}$ for all $0\leq s\leq t\leq 1$. As a straigthforward consequence, one has
$$\sum_{t_i\in \Delta_{st}} M_{t_i t_{i+1}}=\delta \cj_{st}+\sum_{t_i\in \Delta_{st}} \Lambda(\der M)_{t_it_{i+1}} \ ,$$
with $\big\| \sum_{t_i\in \Delta_{st}} \Lambda(\der M)_{t_it_{i+1}} \big\| \leq c \sum_{t_i\in D_{st}} |t_{i+1}-t_i|^{2H} \to 0$ as $|\Delta_{st}| \to 0$, yielding the first convergence result, as well as the identity 
\begin{equation}\label{decompo-young}
\int_s^t P(X_u) \mathrm{d} X_u Q(X_u)=\delta \cj_{st}=M_{st}-\Lambda(\der M)_{st} \ .
\end{equation}

\smallskip

The argument toward (\ref{resu-conv-you}) then goes as follows. First, observe that the above procedure can be applied to the approximation $X^{(n)}$ as well, providing a similar decomposition for the limit of the corresponding sum $\sum_{t_i\in \Delta_{st}} M^{(n)}_{t_i t_{i+1}}$. Besides, as we are here dealing with a smooth path (for fixed $n$), this limit is known to coincide with the classical Lebesgue integral, and we thus obtain the identity
\begin{equation}\label{decompo-young-smooth}
\int_s^t P(X^{(n)}_u) \, \mathrm{d}X^{(n)}_u \, Q(X^{(n)}_u)=M^{(n)}_{st}-\Lambda \big( \der M^{(n)}\big)_{st} \quad , \quad M^{(n)}_{st}:=P(X^{(n)}_s) \der X^{(n)}_{st} Q(X^{(n)}_s) \ .
\end{equation}
In order to compare the two integrals in (\ref{decompo-young}) and (\ref{decompo-young-smooth}), it now suffices to control the two differences $M-M^{(n)}$ and $\Lambda(\der M)-\Lambda(\der M^{(n)})$. The first control is an immediate consequence of (\ref{fir-or-pr}), which gives the expected bound $\big\|M_{st}-M^{(n)}_{st}\big\| \leq c_{H,P,Q,\varepsilon} |t-s|^{H-\varepsilon} 2^{-n\varepsilon}$. The second control leans on both (\ref{fir-or-pr}) and the continuity of $\Lambda$: one has first, with expansion (\ref{expansion-der-m-young}) in mind,
$$\big\|\der M_{sut}-\der M^{(n)}_{sut}\big\|\leq c_{H,P,Q,\varepsilon} |t-s|^{2H-\varepsilon} 2^{-n\varepsilon} \ ,$$
and by applying Lemma \ref{lemma-lambda} we get that $\big\|\Lambda(\der M)_{st}-\Lambda(\der M^{(n)})_{st}\big\| \leq c_{H,P,Q,\varepsilon} |t-s|^{2H-\varepsilon} 2^{-n\varepsilon}$,
which achieves the proof of (\ref{resu-conv-you}).

\smallskip

Finally, (\ref{ito-young}) is just a consequence of (\ref{resu-conv-you}), since the latter convergence property allows us to pass to the limit in the classical differentiation rule $\der P(X^{(n)})_{st}=\int_s^t \partial P(X^{(n)}_u) \sharp \mathrm{d}X^{(n)}_u$.
\end{proof}

\

The resulting \enquote{Young} integral $\int_s^t P(X_u)dX_u Q(X_u)$ thus satisfies the three (moral) requirements $(a)$-$(b)$-$(c)$ raised in Section \ref{subsec:objectives}. In fact, it should be clear to the reader that the above procedure could be readily extended to handle the general integral $\int_s^t Y_u dX_u Z_u$, where $Y:[0,1] \to \ca$, resp. $Z:[0,1] \to \ca$, is a $\ga$-Hölder path, resp. $\ga'$-Hölder path, with $\ga,\ga'\in (0,1)$ satisfying $\ga+H>1$, $\ga'+H>1$. 

\smallskip

However, such an extension will no longer be possible in the subsequent rougher situations, and we have thus prefered to stick to a unified presentation around the same starting model.  

\

Let us now turn to the more interesting case where $H\leq \frac12$, with a first brief stop on the (very) particular case $H=\frac12$.

\subsection{The free case $H=\frac12$.}\label{subsec:free-case}

\

\smallskip

As we have already recalled it in the introduction, the NC-fBm with Hurst index $H=\frac12$ is nothing but the celebrated free Brownian motion, the behaviour of which has been extensively explored in the literature. In a somewhat analogous fashion as the standard (commutative) Brownian motion, the free Brownian motion is known to satisfy a specific independence property, the so-called free independence, at the level of its disjoint increments. Based on this fundamental feature, Biane and Speicher \cite{biane-speicher} have been able to adapt the principles of the classical stochastic integration theory in the non-commutative setting and construct an It{\^o}-type integral with respect to the free Brownian. In \cite{deya-schott}, we have brought a new light on these constructions along a rough-path-type approach (similar to the one we will develop the next section), which allows for more flexibility in the integration procedure, as well as additional approximation results.

\smallskip

The following statement offers a (partial) summary of these considerations, when applied to the integral in (\ref{integral-general}), and in the spirit of the present formulation. We therein denote by $\id \times \vp \times \id$ the linear extension (to $\ca\otimes \ca\otimes \ca$) of the operator $\big( \id \times \vp \times \id \big) \big(U\otimes V \otimes W\big):= \vp(V) UW$. Besides, let us again recall that $X^{(n)}$ is the smooth approximation introduced in (\ref{approx-x}), and that integrals driven by $X^{(n)}$ are all understood in the classical Lebesgue sense.

\begin{proposition}[\cite{biane-speicher,deya-schott}]\label{prop:int-free}
Assume that $H=\frac12$. For all polynomials $P,Q$, all times $0\leq s\leq t \leq 1$ and every subdivision $\Delta_{st} = \{t_0=s<t_1 <\ldots<t_\ell=t\}$ of $[s,t]$ with mesh $|\Delta_{st}|$ tending to $0$, the Riemann sum
\begin{equation}\label{ito-free}
\sum_{t_i\in \Delta_{st}}  P(X_{t_i})\der X_{t_it_{i+1}}Q(X_{t_i})
\end{equation}
converges in $\ca$ as $|\Delta_{st}| \to 0$. We denote the limit by $\int_s^t P(X_u) dX_u Q(X_u)$ and define the related \enquote{Stratonovich} integral by the formula
\begin{align}\label{strato-free}
&\int_s^t P(X_u) (\circ dX_u) Q(X_u)\\
&:=\int_s^t P(X_u) dX_u Q(X_u)+\frac12 \int_s^t \big( \id \times \vp \times \id\big)\big[ \partial P (X_u) \otimes Q(X_u)+P(X_u)\otimes \partial Q(X_u) \big] \ .
\end{align}
Then, as $n\to \infty$, it holds that
\begin{equation}\label{resu-conv-free}
\int_s^t P(X^{(n)}_u) dX^{(n)}_u  Q(X^{(n)}_u) \to \int_s^t P(X_u) (\circ dX_u)  Q(X_u) \ ,
\end{equation}
and one has in particular
\begin{equation*}
\der P(X)_{st}=\int_s^t \partial P(X_u) \sharp (\circ dX_u) \ .
\end{equation*}
\end{proposition}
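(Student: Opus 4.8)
The plan is to reproduce the sewing strategy of Proposition~\ref{prop:int-young}, while coping with the fact that at $H=\frac12$ the second-order bound degenerates to the critical regularity $|t-s|^{2H}=|t-s|$, for which Lemma~\ref{lemma-lambda} is no longer applicable. Setting $M_{st}:=P(X_s)\der X_{st}Q(X_s)$ and expanding exactly as in (\ref{expansion-der-m-young}), one still obtains $\der M_{sut}=-\der P(X)_{su}\der X_{ut}Q(X_s)-P(X_u)\der X_{ut}\der Q(X)_{su}$ and, via (\ref{holder-regu}) and (\ref{basic-ineq-1}), the bound $\|\der M_{sut}\|\leq c\,|t-s|$. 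The exponent $\mu=1$ being precisely the borderline value, a purely analytic sewing is ruled out: an additional, genuinely probabilistic cancellation must be extracted, and this is exactly where the free independence of the disjoint increments of $X$ (valid only at $H=\frac12$, see the Introduction) enters the picture.

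The decisive input is the free It{\^o} multiplication rule of Biane--Speicher \cite{biane-speicher}: for $H=\frac12$, since $\der X_{ut}$ is semicircular of variance $|t-u|$ and freely independent of the algebra generated by $\{X_v:v\le u\}$, a double increment $\der X_{ut}\,B\,\der X_{ut}$ with $B$ adapted to time $u$ reduces, at leading order and in the relevant ($L^2(\vp)$ / operator-norm) sense, to $\vp(B)\,(t-u)$ --- morally \enquote{$dX\,B\,dX=\vp(B)\,dt$}. This is the exact counterpart of the classical It{\^o} rule and is the source of the operator $\id\times\vp\times\id$ appearing in (\ref{strato-free}). Combining this rule with the free Burkholder--Davis--Gundy operator-norm inequality of \cite{biane-speicher}, I would show that the left-point Riemann sums $\sum_{t_i\in\Delta_{st}}M_{t_it_{i+1}}$ form a Cauchy sequence in $\ca$, the \enquote{martingale} (off-diagonal) part being controlled in operator norm thanks to freeness. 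This yields the first convergence and defines the It{\^o} integral $\int_s^t P(X_u)\,dX_u\,Q(X_u)$; the pathwise reformulation of \cite{deya-schott} then recasts it as a sewing-type decomposition, on which the comparison with the smooth approximation can be based.

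For the Stratonovich statement I would argue in the Wong--Zakai spirit. Each $X^{(n)}$ of (\ref{approx-x}) has bounded variation, so its integral obeys the ordinary chain rule and the smooth decomposition (\ref{decompo-young-smooth}) applies verbatim with $M^{(n)}_{st}:=P(X^{(n)}_s)\der X^{(n)}_{st}Q(X^{(n)}_s)$. The difference between the smooth limit and the It{\^o} integral is carried by the second-order terms $\der P(X)\,\der X$ and their mirror $\der X\,\der Q(X)$; inserting the Taylor expansion (\ref{basic-ineq-2}) and invoking the free It{\^o} rule $dX\,B\,dX=\vp(B)\,dt$ above, the symmetrized contribution converges, as $n\to\infty$, to $\frac12\int_s^t(\id\times\vp\times\id)[\partial P(X_u)\otimes Q(X_u)+P(X_u)\otimes\partial Q(X_u)]$, which is precisely the corrector defining (\ref{strato-free}). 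This establishes (\ref{resu-conv-free}). Finally, the differentiation rule $\der P(X)_{st}=\int_s^t\partial P(X_u)\sharp(\circ dX_u)$ follows by passing to the limit in the exact chain rule $\der P(X^{(n)})_{st}=\int_s^t\partial P(X^{(n)}_u)\sharp dX^{(n)}_u$ valid for the smooth paths, exactly as (\ref{ito-young}) was derived from (\ref{resu-conv-you}).

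The main obstacle is the borderline regularity $2H=1$ in the first step: in sharp contrast with the Young regime $H>\frac12$, H{\"o}lder analysis alone cannot close the sewing, and one is forced to use the genuinely free-probabilistic structure --- the Biane--Speicher isometry / It{\^o} rule together with its operator-norm martingale inequalities --- to produce the missing cancellation and the accompanying corrector. Once the It{\^o} integral and this corrector are secured, the Stratonovich identification and the chain rule are comparatively routine.
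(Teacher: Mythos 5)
The paper offers no self-contained proof of this proposition: it simply defers to \cite[Theorem 3.2.1]{biane-speicher} for the convergence of the Riemann sums and to Propositions 4.10, 4.16 and 5.5 of \cite{deya-schott} for the approximation property (\ref{resu-conv-free}). Your outline correctly reconstructs the ingredients of those cited proofs --- the breakdown of the sewing bound at the critical exponent $\mu=2H=1$, the free It{\^o} rule $dX\,B\,dX=\vp(B)\,dt$ with the operator-norm martingale inequalities of \cite{biane-speicher} as the source of the missing cancellation, and the Wong--Zaka{\"i}-type identification of the corrector $\frac12(\id\times\vp\times\id)[\cdots]$ --- so it follows essentially the same route the paper implicitly relies on.
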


\

To be more specific, the convergence of the sum in (\ref{ito-free}) is a consequence of \cite[Theorem 3.2.1]{biane-speicher}, while the approximation property (\ref{resu-conv-free}) follows from the combination of the results of Proposition 4.10, Proposition 4.16 and Proposition 5.5 in \cite{deya-schott}.

\smallskip

When dealing with the free Brownian motion, we thus observe a similar It{\^o}/Stratonovich duality as in classical stochastic integration theory (with respect to the Wiener process), which, to some extent, offers a natural transition between the previous Young situation $H>\frac12$ (with convergence of the \enquote{It{\^o}} integral, along (\ref{riemann})) and the forthcoming rough situation $H<\frac12$ (with convergence of the \enquote{Stratonovich} integral, along (\ref{corrected-riemann}) and Remark \ref{link-h-1-2}). Also, as can be seen from (\ref{resu-conv-free}), and just as in the standard Wiener case, the Stratonovich interpretation turns out to be the most robust one as far as approximation of the driver is concerned: it therefore provides us with the \enquote{solution} to our three requirements $(a)$-$(b)$-$(c)$ in Section \ref{subsec:objectives}.

\subsection{The first rough case: $H\in (\frac13,\frac12)$}\label{subsec:first-rough-case}

\

\smallskip

As soon as $H<\frac12$, both previous strategies clearly fail: the process is not regular enough for the Young method (based on classical Riemann sums) to work, and its disjoint increments are no longer (freely) independent, ruling out the free-case procedure.

\smallskip

In the continuation of \cite{deya-schott,deya-schott-2}, and borrowing some ideas from rough paths theory, we propose to introduce a construction based on the consideration of \textit{local second-order expansions} and \textit{corrected Riemann sums}, which will at least allow us to cover the case $H\in (\frac13,\frac12)$.

\smallskip

At this point, we need to mention that, at a theoretical level, this setting and the below developments are very close to the analysis carried out in \cite[Section 4]{deya-schott}. The latter reference indeed contains a possible approach to integration with respect to any $\ga$-Hölder NC-process with $\ga\in (\frac13,\frac12)$, which, due to (\ref{holder-regu}), is exactly the regularity condition that prevails here. Nevertheless, the results of \cite[Section 4]{deya-schott} leans on the extensive use of some a-priori-defined object called a \textit{product L{\'e}vy area}, and that is expected to satisfy very specific conditions (we can then check these conditions in the free Brownian case \cite[Section 5]{deya-schott} or in the $q$-Brownian situation \cite[Section 3]{deya-schott-2}). 

\smallskip

Unfortunately, owing to the strong dependency of the increments of the NC-fBm, we have not been able to exhibit such a product L{\'e}vy area above the process (we suspect that such an object does not even exist in this case, at least not \textit{stricto sensu}, see Section \ref{subsec:beyond-poly} below). Instead, we will rely on some \enquote{weaker product L{\'e}vy area}, which does not meet all the requirements of \cite[Definition 4.4]{deya-schott}, but which will be sufficient for our purpose. The construction of this object is the topic of Proposition \ref{main-proposition} below, and the main technical result of our analysis.

\smallskip

As an introduction to this central property, let us briefly recall that in the rough-path procedure, the consideration of corrected Riemann sums derives from the formal second-order expansion of the integral at stake, just as the consideration of classical Riemann sums morally stems from a first-order expansion. Thus, in the situation we are interested in, we consider that, at first order,
\begin{equation}\label{approx-first-order}
\int_s^t P(X_u) dX_u Q(X_u) \approx P(X_s) (\der X)_{st} Q(X_s) \ ,
\end{equation}
yielding the main term in (\ref{riemann}), while at second order, we have, owing to (\ref{basic-ineq-2}),
\small
\begin{align}
&\int_s^t P(X_u) dX_u Q(X_u) \nonumber\\
&\approx P(X_s) (\der X)_{st} Q(X_s)+\int_s^t \der P(X)_{su} dX_u Q(X_s)+\int_s^t P(X_s) dX_u \der Q(X)_{su}\nonumber\\
&\approx P(X_s) (\der X)_{st} Q(X_s)+\Big( \int_s^t (\partial P(X_s) \sharp \der X_{su})dX_u\Big) Q(X_s)+P(X_s) \Big(\int_s^t  dX_u (\partial Q(X_s) \sharp \der X_{su})\Big)\ ,\label{formal-second-order-exp}
\end{align}
\normalsize
which will ultimately lead us to the desired local correction. The rigorous implementation of this idea, that is the treatment of the implicit remainder in expansion (\ref{formal-second-order-exp}), will then be made possible through the combination of the technical Lemmas \ref{lem:coho} and \ref{lemma-lambda}, along a similar pattern as in the proof of Proposition \ref{prop:int-young}. However, for this machinery to work, we still need to \enquote{feed} it with a proper definition of the second-order objects involved in (\ref{formal-second-order-exp}). In other words, we still need to give an a-priori sense to (or to \enquote{explicitly construct}) the two integrals 
\begin{equation}\label{formal-iterated-integrals}
\int_s^t (\partial P(X_s) \sharp \der X_{su})dX_u \quad \text{and} \quad \int_s^t  dX_u (\partial Q(X_s) \sharp \der X_{su}) \ .
\end{equation}
In these expressions, observe that neither $\partial P(X_s)$ nor $\partial Q(X_s)$ depend on the integration variable $u$, so that the integrals can overall be regarded as the \textit{product iterated integrals of $X$} or the \textit{product L{\'e}vy areas above $X$} (\enquote{applied} to $\partial P(X_s)$ and $\partial Q(X_s)$). Observe also (still at a heuristic level for the moment) that the second integral in (\ref{formal-iterated-integrals}) can be easily recovered from the first one: indeed, as $X_t$ is a self-adjoint element in $\ca$ (for every $t$), one has morally, for every $U\in \ca$,
$$\Big(\int_s^t  dX_u U \der X_{su}\Big)^\ast=\int_s^t \der X_{su} U^\ast dX_u \ .$$

With these few ideas in mind, let us turn to the actual construction procedure, which, as in \cite{coutin-qian}, will lean on an approximation of these objects. Namely, consider the approximation $(X^{(n)})_{n\geq 0}$ of $X$ given by (\ref{approx-x}) and define the sequence of approximated L{\'e}vy areas by the natural formula: for all $n\geq 0$ and $U\in \ca$,
\begin{equation}\label{levy-area-appr}
\mathbb{X}^{2,(n)}_{st}[U]:=\int_s^t \der X^{(n)}_{su} U \mathrm{d}X^{(n)}_u \ ,\quad 0\leq s\leq t\leq 1\ ,
\end{equation}
where the integral is here understood in the classical Lebesgue sense. Our objective now is to show the convergence of this sequence as $n\to\infty$, that is as $X^{(n)}$ converges to $X$. As it can be guessed from the proof of Proposition \ref{prop:int-young}, the pathwise method also requires us to exhibit suitable controls on the limit, regarding whether the time variables $s,t$ or the \enquote{fixed} integrand $U$ in (\ref{levy-area-appr}). 

\smallskip 

\begin{remark}\label{rk:commutator}
Let us briefly go back here to the discussion we have launched at the end of Section \ref{sec:introduction}, and insist on the specificity of this object, that is $\int_s^t \der X^{(n)}_{su} U \mathrm{d}X^{(n)}_u$, with respect to its classical commutative counterpart. Note indeed that if $\ca$ were a commutative algebra, or more generally if the variables $\{U,X_t; \, t\geq 0\}$ all commuted, then expression (\ref{levy-area-appr}) would of course reduce to $\frac12 U (\der X^{(n)}_{st})^2$, providing an immediate answer to the above convergence issue, for any $H\in (0,1)$ (it is a well-known fact that the rough-path approach is not relevant when applied to a one-dimensional - and so, commutative - Hölder process).
In a general algebra, this question is no longer trivial and is in fact closely related to the local \enquote{non-commutativity} of the process under consideration. For instance, it is easy to see that 
\begin{equation}\label{levy-area-commutation}
\mathbb{X}^{2,(n)}_{01}[1]=\frac12 X_1^2+\frac12\sum_{i=0}^{2^n-1}\big[X_{t_i^n},X_{t_{i+1}^n} \big] \quad , \quad \text{with} \  \big[X_{t_i^n},X_{t_{i+1}^n} \big]:= X_{t_i^n}X_{t_{i+1}^n}-X_{t_{i+1}^n}X_{t_{i}^n} \ ,
\end{equation}
and in light of this expression, the question (morally) is therefore to know whether the sum $\sum_i [X_{t_i^n},X_{t_{i+1}^n}]$ of \enquote{infinitesimal commutators} converges as $n\to \infty$. At a heuristic level, the problem can thus be interpreted as follows: the more \enquote{locally commutative} the process (i.e., the smaller $[X_{t_i^n},X_{t_{i+1}^n}]$), the more chances the sum, and accordingly the sequence $\mathbb{X}^{2,(n)}_{01}[1]$ of (simplified) product L{\'e}vy areas, to converge.
\end{remark}

\smallskip

In order to prove the convergence of $(\mathbb{X}^{2,(n)})_{n\geq 0}$ in the present situation, that is when $H\in (\frac13,\frac12$), we actually need to reduce the class of possible \enquote{fixed} integrands $U$ in (\ref{levy-area-appr}), at least in a way that still encompasses our target integrals in (\ref{formal-iterated-integrals}). To this end, we introduce, for all $t\in [0,1]$, the unital subalgebra $\ca_t$ generated by $(X_s)_{0\leq s\leq t}$, i.e.
$$\ca_t:=\big\{ \la_0\, 1+\sum_{i=1}^n \la_i X_{t_1^i}X_{t_2^i} \cdots X_{t_{p_i}^i} \ : \ n\geq 1 \ , \ \la_i\in \R \ , \ p_i\geq 1 \ , \ 0\leq t_j^i \leq t\big\} \ .$$
The desired property can now be stated as follows:

\begin{proposition}\label{main-proposition}
Assume that $H\in (\frac14,\frac12)$. Then, for all $0\leq s\leq t\leq 1$ and $U\in \ca_s$, the sequence $\mathbb{X}_{st}^{2,(N)}[U]$ converges in $\ca$ as $N\to \infty$, and the limit, that we denote by $\mathbb{X}_{st}^{2}[U]$, satisfies the following properties:

\smallskip

\noindent
$(i)$ For all $0\leq s\leq t\leq 1$, $\mathbb{X}^2_{st}\in \cl(\ca_s,\ca)$.

\smallskip

\noindent
$(ii)$ For all $0\leq s\leq u\leq t\leq 1$ and $U\in \ca_s$, 
\begin{equation}\label{chen}
\mathbb{X}^2_{st}[U]-\mathbb{X}^2_{su}[U]-\mathbb{X}^2_{ut}[U]=\der X_{su} U \der X_{ut}\ .
\end{equation}

\smallskip

\noindent
$(iii)$ For all $\varepsilon\in (0,2H-\frac12)$, $\varepsilon'\in [0,H)$, there exist constants $c_{H,\varepsilon},c_{H,\varepsilon,\varepsilon'} >0$ such that for all $0\leq s\leq t\leq 1$, $N\geq 0$, $m\geq 0$, $N\leq N_1,\ldots,N_m\leq \infty$, $1\leq \iota\leq m$ and $0\leq u_j\leq v_j\leq s$ ($j=1,\ldots,m$),
\begin{equation}\label{roughness-general-1}
\big\| \big\{ \mathbb{X}^2_{st}-\mathbb{X}^{2,(N)}_{st}\big\}\big[ \der X_{u_1 v_1} \cdots \der X_{u_m v_m}\big] \big\|
 \leq c_{H,\varepsilon} 2^m \frac{|t-s|^{2H-\varepsilon}}{2^{N\varepsilon}} \prod_{j=1,\ldots,m} |u_j-v_j|^H\ ,
\end{equation}
and
\begin{align}
&\big\| \mathbb{X}^{2,(N)}_{st}\big[ \der X^{(N_1)}_{u_1 v_1} \cdots \der(X^{(N_\iota)}-X)_{u_\iota v_\iota} \cdots \der X^{(N_m)}_{u_m v_m}\big] \big\|\nonumber\\
&\hspace{1cm}\leq c_{H,\varepsilon,\varepsilon'} 2^m |t-s|^{2H-\varepsilon}\frac{|v_\iota-u_\iota|^{H-\varepsilon'}}{2^{N_\iota\varepsilon'}} \prod_{\substack{j=1,\ldots,m\\ j\neq \iota}} |u_j-v_j|^H\ ,\label{roughness-general-2}
\end{align}
where we have used the convention $X^{(\infty)}:=X$.
\end{proposition}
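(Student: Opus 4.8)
The plan is to construct the limiting area $\mathbb{X}^2$ by controlling the smooth approximations $\mathbb{X}^{2,(N)}$ directly in the Banach algebra $\ca$, turning every norm estimate into a trace computation via the fundamental identity (\ref{trace-norm-2}), which the non-commutative Wick formula (\ref{form-wick}) then reduces to a combinatorial sum over non-crossing pairings. Two reductions come first. At the smooth level the Chen relation is exact: differentiating (\ref{levy-area-appr}) and using that the integrand $U$ is constant in the integration variable gives, for all $0\le s\le u\le t\le 1$,
\[
(\der \mathbb{X}^{2,(N)}[U])_{sut}=\der X^{(N)}_{su}\, U\, \der X^{(N)}_{ut}\ .
\]
Moreover $X_0=0$ by faithfulness of $\vp$ applied to (\ref{cova-NC-fBm}), so every generator of $\ca_s$ is an increment $\der X_{0r}$ with $r\le s$; by linearity of $U\mapsto \mathbb{X}^{2,(N)}_{st}[U]$ it then suffices to prove convergence and the estimates of $(iii)$ for integrands of the monomial form $U=\der X_{u_1 v_1}\cdots \der X_{u_m v_m}$ with all $u_j,v_j\le s$. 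The restriction $U\in\ca_s$ (rather than $U\in\ca$) is essential here: it places every factor of $U$ at a time anterior to $s$, which is exactly the time-ordering that will make the pairing estimates favorable.

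The core is the convergence estimate. Fix $M>N$ and set $D:=\{\mathbb{X}^{2,(M)}_{st}-\mathbb{X}^{2,(N)}_{st}\}[U]$ for such a monomial $U$. By (\ref{trace-norm-2}), $\|D\|=\lim_{r\to\infty}\vp\big((DD^\ast)^r\big)^{1/(2r)}$. Each factor of $DD^\ast$ is a grid integral of a word in the increments of $X^{(M)}$, $X^{(N)}$ and in the fixed increments of $U$; expanding and applying (\ref{form-wick}), the quantity $\vp\big((DD^\ast)^r\big)$ becomes a sum over the parameters $u_\bullet,v_\bullet$ and over the non-crossing pairings $\pi\in NC_2$ of the $O(r)$ factors, of products of pairwise covariances $\vp(\der X_{ab}\der X_{cd})$ and their $X^{(N)}$-regularized versions. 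I would bound this sum by three complementary inputs: the explicit covariance (\ref{cova-NC-fBm}) estimates each pairing contribution, the self-pairings of the $U$-increments producing the factors $|u_j-v_j|^{H}$ (after the $2r$-th root) while the integration variables produce $|t-s|^{2H}$; the difference structure $X^{(M)}-X^{(N)}=(X^{(M)}-X)-(X^{(N)}-X)$ supplies the decay $2^{-N\varepsilon}$ through the second bound in (\ref{fir-or-pr}); and the number of non-crossing pairings is absorbed by $|NC_2|^{1/(2r)}\to 2$, exactly as in the proof of Lemma \ref{lem:holder-regu}, so that the root and the limit $r\to\infty$ yield a finite constant, with the factor $2^m$ reflecting the $m$ increments carried by $U$. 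This produces $\|D\|\le c_{H,\varepsilon}2^m|t-s|^{2H-\varepsilon}2^{-N\varepsilon}\prod_j|u_j-v_j|^H$, which shows that $(\mathbb{X}^{2,(N)}_{st}[U])_N$ is Cauchy in $\ca$, defines $\mathbb{X}^2_{st}[U]$ as its limit, and, letting $M\to\infty$, gives (\ref{roughness-general-1}). Estimate (\ref{roughness-general-2}) follows from the same computation fed with one difference-increment $\der(X^{(N_\iota)}-X)_{u_\iota v_\iota}$, whose covariances carry the extra factor $2^{-N_\iota\varepsilon'}|v_\iota-u_\iota|^{H-\varepsilon'}$, again by (\ref{fir-or-pr}).

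It then remains to record the three listed properties. Property $(ii)$ is obtained by passing to the limit $N\to\infty$ in the smooth Chen relation displayed above, using the convergence of $\mathbb{X}^{2,(N)}_{st}[U]$ just established together with $X^{(N)}\to X$ (Lemma \ref{lem:fir-or}). Linearity of $\mathbb{X}^2_{st}$ in $U$ is inherited from the approximations, and for boundedness $(i)$, i.e. $\mathbb{X}^2_{st}\in\cl(\ca_s,\ca)$, one shows $\|\mathbb{X}^2_{st}[U]\|\le C_{st}\|U\|$ for every $U\in\ca_s$ from the uniform-in-$N$ operator bound furnished by the very same trace machinery (bounding $\vp\big((\mathbb{X}^{2,(N)}_{st}[U]\,\mathbb{X}^{2,(N)}_{st}[U]^\ast)^r\big)$ in terms of $\|U\|$), so that the pointwise limit remains a bounded operator.

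The main obstacle is squarely the combinatorial trace estimate of the second paragraph: controlling the sum over non-crossing pairings of covariance products uniformly in the dyadic grid, in $r$, and in the number $m$ of factors of $U$, in particular identifying the dominant pairings (those pairing each $U$-increment with its counterpart in another copy of $U$, the cross terms with the integration variables being negligible thanks to the time-ordering $u_j,v_j\le s$). It is precisely here that the threshold $H>\frac14$ enters — as in the Coutin–Qian analysis of the classical fractional L\'evy area — through the finiteness of the relevant $\rho$-variation-type covariance sums; when $H\le\frac14$ these sums diverge, which is the mechanism behind the non-convergence of Proposition \ref{prop:non-convergence}. As Remark \ref{rk:commutator} indicates, the borderline behaviour is ultimately governed by the second moment of the commutator sum $\sum_i[X_{t_i^n},X_{t_{i+1}^n}]$, whose control relies on subtle cancellations between off-diagonal terms that only survive for $H>\frac14$.
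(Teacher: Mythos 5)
Your overall framework (reduce to monomial integrands, compute the norm through the trace formula (\ref{trace-norm-2}), expand via the Wick formula (\ref{form-wick}) into non-crossing pairings, and feed the result with covariance estimates of Coutin--Qian type) is indeed the paper's strategy. But the step you flag as ``the main obstacle'' is the entire proof, and the mechanism you propose for it is not the right one — in fact it would fail. You identify the dominant pairings as those coupling each $U$-increment with its counterpart in another copy of $U$, and you treat the couplings involving the integration variables as negligible by time-ordering. The genuinely dangerous pairings are of a different kind: inside a single factor $Y_{2i}UY_{2i+1}$ of the telescoped difference (with $Y_i=\der X_{t_i^{n+1}t_{i+1}^{n+1}}$), the non-crossing pairing may couple $Y_{2i}$ with $Y_{2i+1}$, producing a factor $\vp(Y_{2i}Y_{2i+1})\,\vp(U)\simeq c_H 2^{-2H(n+1)}\vp(U)$; summed over the $\sim |t-s|2^n$ indices $i$ this contributes $\sim |t-s|\,2^{n(1-2H)}$, which diverges for every $H<\frac12$ — exactly the same divergence as in (\ref{div-riem-sums}). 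No covariance or $\rho$-variation estimate can save this term. The paper's proof survives only because the consecutive-level difference reduces to the antisymmetrized sum $\frac12\sum_i\big[Y_{2i}UY_{2i+1}-Y_{2i+1}UY_{2i}\big]$ (formula (\ref{main-term-area})), and the sign $(-1)^{N(\si)}$ attached to this commutator structure makes the contributions of the pairing classes $\ce_1,\ce_2,\dots$ (those containing an ``internal'' pair $((j-1)(m+2)+1,\,j(m+2))$) cancel exactly, leaving only the set $\ce$ on which the Cauchy--Schwarz/covariance machinery (Lemmas \ref{lem:CS}, \ref{lem:estim-cova-1}, \ref{lem:estim-cova-2}) applies. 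Your choice to estimate $\mathbb{X}^{2,(M)}_{st}-\mathbb{X}^{2,(N)}_{st}$ for arbitrary $M>N$ rather than telescoping over consecutive dyadic levels obscures precisely this commutator structure, so the cancellation is not visible in your setup; without it the proposed bound is false. Note also that $H>\frac14$ enters not through an abstract $\rho$-variation criterion but concretely through Lemma \ref{lem:estim-cova-1}, which requires $\varepsilon\in(0,2H-\frac12)$ to sum the squared off-diagonal covariances.

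A second, smaller issue concerns item $(i)$. The paper establishes only the linearity of $U\mapsto\mathbb{X}^2_{st}[U]$ on $\ca_s$; it does not prove, and explicitly doubts (see the discussion in Section \ref{subsec:beyond-poly}), an operator bound of the form $\|\mathbb{X}^2_{st}[U]\|\le C_{st}\|U\|$. The estimate actually available, (\ref{roughness-general-1}), controls $\mathbb{X}^2_{st}$ on the monomial $\der X_{u_1v_1}\cdots\der X_{u_mv_m}$ by $2^m\prod_j|u_j-v_j|^H$, a quantity comparable to the product of the norms of the factors but not to the norm of the product; the factor $2^m$ prevents any uniform-in-$m$ control, and a generic $U\in\ca_s$ is a linear combination of such monomials with no control on the coefficients in terms of $\|U\|$. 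Your claim that the ``very same trace machinery'' yields boundedness in $\|U\|$ is therefore an overstatement: if you could prove it, you would have produced a genuine product L\'evy area in the sense of \cite[Definition 4.4]{deya-schott}, which is precisely what the authors say they were unable to do.
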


For the sake of clarity, we have postponed the (technical) proof of this result to Section \ref{sec:proof-main-result-1}. Let us now see how we can lean on this object $\mathbb{X}^2$ (and the related approximation results) to offer a satisfying interpretation of the general integral in (\ref{integral-general}). For a convenient statement of the result, we set, along a similar convention as in (\ref{sharp-not}), and for all $0\leq s \leq t\leq 1$, $U,V \in \ca_s$, 
$$(U\otimes V)\sharp \mathbb{X}^2_{st}:=U\mathbb{X}^2_{st}[V]\quad , \quad \mathbb{X}^{2,\ast}_{st}\sharp (U\otimes V):=\mathbb{X}^2_{st}[U^\ast]^\ast V\ ,$$
and then linearly extend these two notations to any element of the algebraic tensor product $\ca_s \otimes \ca_s$.

\smallskip

\begin{proposition}\label{prop:int-gen}
Fix $H\in (\frac13,\frac12)$, and let $P,Q$ be two polynomials. For all $0\leq s\leq t \leq 1$ and every subdivision $\Delta_{st} = \{t_0=s<t_1 <\ldots<t_\ell=t\}$ of $[s,t]$ with mesh $|\Delta_{st}|$ tending to $0$, the corrected Riemann sum
\begin{equation}\label{corrected-riemann}
\sum_{t_i\in \Delta_{st}} \Big\{ P(X_{t_i})(\der X)_{t_it_{i+1}}Q(X_{t_i})+(\partial P (X_{t_i}) \sharp \mathbb{X}^2_{t_it_{i+1}}) Q(X_{t_i})+P(X_{t_i})(\mathbb{X}^{2,\ast}_{t_it_{i+1}} \sharp \partial Q(X_{t_i}))\Big\}
\end{equation}
converges in $\ca$ as $|\Delta_{st}| \to 0$. The limit, that we denote by $\int_s^t P(X_u)d X_u Q(X_u)$, is such that for all $n\geq 0$ and $\varepsilon \in (0,\frac12 (3H-1))$,
\begin{equation}\label{resu-conv-rough}
\bigg\| \int_s^t P(X^{(n)}_u) dX^{(n)}_u  Q(X^{(n)}_u)-\int_s^t P(X_u) dX_u  Q(X_u)\bigg\| \leq c_{H,P,Q,\varepsilon} |t-s|^{H-\varepsilon} 2^{-n\varepsilon} \ ,
\end{equation}
for some constant $c_{H,P,Q,\varepsilon} >0$, and so, based on this construction, one has
\begin{equation}\label{ito-rough}
\der P(X)_{st}=\int_s^t \partial P(X_u) \sharp dX_u \ .
\end{equation}
\end{proposition}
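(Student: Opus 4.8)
The plan is to mirror the structure of the proof of Proposition \ref{prop:int-young}, upgrading the first-order expansion to the second-order one dictated by (\ref{formal-second-order-exp}). I would set
$$
M_{st}:=P(X_s)(\der X)_{st}Q(X_s)+(\partial P(X_s)\sharp \mathbb{X}^2_{st})Q(X_s)+P(X_s)(\mathbb{X}^{2,\ast}_{st}\sharp \partial Q(X_s)) \ ,
$$
so that the corrected Riemann sum in (\ref{corrected-riemann}) is exactly $\sum_{t_i\in \Delta_{st}}M_{t_it_{i+1}}$. The first step is to compute $\der M_{sut}$ and show that $\|\der M_{sut}\|\leq c\,|t-s|^{3H-\ep}$ for suitable small $\ep$. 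Since $H>\frac13$ we have $3H>1$, so this puts us in the regime of the Sewing Lemma \ref{lemma-lambda} (with $\mu=3H-\ep>1$). The algebraic identity that makes the second-order terms cancel against the increments is precisely Chen's relation (\ref{chen}): when expanding $\der(\partial P(X_s)\sharp \mathbb{X}^2)_{sut}$ one uses $\mathbb{X}^2_{st}[U]-\mathbb{X}^2_{su}[U]-\mathbb{X}^2_{ut}[U]=\der X_{su}U\der X_{ut}$, and the resulting $\der X_{su}\cdots\der X_{ut}$ terms are cancelled by the Taylor remainder coming from (\ref{basic-ineq-2}) applied to $\der P(X)_{su}$ and $\der Q(X)_{su}$. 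This cancellation is the combinatorial heart of the argument and I expect it to be the main obstacle: one must carefully track which second-order contributions survive and check that they combine into a genuine cubic-order remainder, using (\ref{basic-ineq-2}), the H\"older bound (\ref{holder-regu}), and the regularity $\|\mathbb{X}^2_{st}[U]\|\lesssim |t-s|^{2H}$ for $U\in\ca_s$ (which follows from part $(i)$ together with (\ref{roughness-general-1}) at $N=0$, after controlling $\mathbb{X}^{2,(0)}$).

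Once $\|\der M_{sut}\|\leq c\,|t-s|^{3H-\ep}$ is established, the argument proceeds exactly as in Proposition \ref{prop:int-young}. Applying $\Lambda$ to $\der M$ and invoking Lemma \ref{lem:coho} yields a path $\cj$ with $\der\cj_{st}=M_{st}-\Lambda(\der M)_{st}$, whence
$$
\sum_{t_i\in\Delta_{st}}M_{t_it_{i+1}}=\der\cj_{st}+\sum_{t_i\in\Delta_{st}}\Lambda(\der M)_{t_it_{i+1}} \ ,
$$
and the second sum is bounded by $c\sum_i |t_{i+1}-t_i|^{3H-\ep}\to 0$ as $|\Delta_{st}|\to 0$ since $3H-\ep>1$. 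This gives convergence of the corrected Riemann sum and the identity $\int_s^t P(X_u)dX_uQ(X_u)=M_{st}-\Lambda(\der M)_{st}$.

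For the approximation estimate (\ref{resu-conv-rough}), I would repeat the construction with $X^{(n)}$ in place of $X$, producing $M^{(n)}_{st}$ built from $\mathbb{X}^{2,(n)}$, and use that for smooth $X^{(n)}$ the limit of the corrected sum coincides with the classical Lebesgue integral (the correction terms being lower order for a smooth path). It then suffices to bound $\|M_{st}-M^{(n)}_{st}\|$ and $\|\Lambda(\der M)_{st}-\Lambda(\der M^{(n)})_{st}\|$. The first is controlled term by term: the first-order part by (\ref{fir-or-pr}) as in Proposition \ref{prop:int-young}, and the L\'evy-area parts by the approximation bounds (\ref{roughness-general-1})--(\ref{roughness-general-2}), which furnish the decisive factor $2^{-N\ep}$ (here $2^{-n\ep}$). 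The second difference is handled by the continuity of $\Lambda$ in Lemma \ref{lemma-lambda} applied to $\der M-\der M^{(n)}$, whose increment admits a bound of order $|t-s|^{3H-\ep}2^{-n\ep}$ obtained by combining (\ref{fir-or-pr}) with (\ref{roughness-general-1})--(\ref{roughness-general-2}). Choosing $\ep\in(0,\tfrac12(3H-1))$ keeps $3H-2\ep>1$, preserving summability and giving the stated rate with spatial exponent $H-\ep$. Finally, (\ref{ito-rough}) follows by passing to the limit in the classical rule $\der P(X^{(n)})_{st}=\int_s^t \partial P(X^{(n)}_u)\sharp dX^{(n)}_u$, exactly as in the Young case.
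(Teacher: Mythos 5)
Your proposal is correct and follows essentially the same route as the paper: the same corrected germ $M_{st}$, the same use of Chen's relation (\ref{chen}) combined with the Taylor bound (\ref{basic-ineq-2}) to obtain $\|\der M_{sut}\|\lesssim |t-s|^{3H-\varepsilon}$, the same application of Lemmas \ref{lem:coho} and \ref{lemma-lambda}, and the same comparison of $M-M^{(n)}$ and $\der M-\der M^{(n)}$ via (\ref{fir-or-pr}), (\ref{roughness-general-1}) and (\ref{roughness-general-2}). The only caveat is that your shorthand $\|\mathbb{X}^2_{st}[U]\|\lesssim|t-s|^{2H}$ for general $U\in\ca_s$ is not literally what (\ref{roughness-general-1}) provides (it only applies to products of increments, with a $2^m$ factor), but since $\partial P(X_s)$ and $\partial Q(X_s)$ are finite sums of monomials in $X_s=\der X_{0s}$ this is harmless here.
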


\

This interpretation of the integral thus clearly meets requirements $(b)$-$(c)$ of Section \ref{subsec:objectives}. Regarding condition $(a)$, we can only assert that, as far the driver $X$ is concerned, the above definition is as intrinsic as possible (the usual and \enquote{more intrinsic} Riemann sums could indeed diverge, as shown in (\ref{div-riem-sums})), but it still involves an a-priori-defined object $\mathbb{X}^2$ whose construction may depend on the chosen approximation $X^{(n)}$ of $X$. This is a standard phenomenon in rough paths theory.

\smallskip

Thanks to the results of Lemma \ref{lem:fir-or} and Proposition \ref{main-proposition}, Proposition \ref{prop:int-gen} could essentially be derived from the considerations of \cite[Section 4]{deya-schott} (applied to the particular integral in (\ref{integral-general})). However, as we evoked it earlier, the properties exhibited in Proposition \ref{main-proposition} (and especially the two estimates (\ref{roughness-general-1}) and (\ref{roughness-general-2})) are not exactly the same as those appearing in the central definition \cite[Definition 4.4]{deya-schott}. Therefore, for both clarity and rigor, we prefer to review the main arguments behind the transition from Proposition \ref{main-proposition} to Proposition \ref{prop:int-gen}. This will also allow us to emphasize the similarities with the Young procedure of Section \ref{subsec:young}.

\begin{proof}[Proof of Proposition \ref{prop:int-gen}]
We follow the pattern of the proof of Proposition \ref{prop:int-young}, starting this time from the path
\begin{equation}\label{defi-m-rough}
M_{st}:=P(X_s)\der X_{st}Q(X_s)+(\partial P (X_s) \sharp \mathbb{X}^2_{st}) Q(X_s)+P(X_s)(\mathbb{X}^{2,\ast}_{st} \sharp \partial Q(X_s)) \ .
\end{equation}
For all $0\leq s\leq u\leq t\leq 1$, the increments of $\der M_{sut}$ can be readily expanded as 
\begin{align}
\der M_{sut}=&\big[-\der P(X)_{su} \der X_{ut} Q(X_s)+\big( \partial P (X_s) \sharp \der \mathbb{X}^2_{sut} \big) Q(X_s)\big]\nonumber\\
& +\big[-P(X_u) \der X_{ut} \der Q(X)_{su}+P(X_s) \big( \der \mathbb{X}^{2,\ast}_{sut} \sharp \partial Q(X_s) \big) \big]\nonumber\\
&\big[ \big( \partial P (X_s) \sharp \mathbb{X}^2_{ut} \big) Q(X_s)-\big( \partial P (X_u) \sharp \mathbb{X}^2_{ut} \big) Q(X_u)\big]\nonumber\\
&+\big[ P(X_s) \big( \mathbb{X}^{2,\ast}_{ut} \sharp \partial Q(X_s) \big)-P(X_u) \big(  \mathbb{X}^{2,\ast}_{ut} \sharp \partial Q(X_u) \big)\big] \ .\label{expansion-delta-m}
\end{align}
Let us now estimate each term into bracket separetely. As far as the third term is concerned, one has naturally
\begin{align*}
&\big( \partial P (X_s) \sharp \mathbb{X}^2_{ut} \big) Q(X_s)-\big( \partial P (X_u) \sharp \mathbb{X}^2_{ut} \big) Q(X_u)\\
&\hspace{1cm}=-\big( \der(\partial P (X))_{su} \sharp \mathbb{X}^2_{ut} \big) Q(X_s)-\big( \partial P (X_u) \sharp \mathbb{X}^2_{ut} \big) \der Q(X)_{su} \ ,
\end{align*}
which, using estimate (\ref{roughness-general-1}) (with $N=0$), entails that
$$\big\| \big( \partial P (X_s) \sharp \mathbb{X}^2_{ut} \big) Q(X_s)-\big( \partial P (X_u) \sharp \mathbb{X}^2_{ut} \big) Q(X_u) \big\| \leq c_{H,P,Q,\varepsilon}  |t-s|^{3H-\varepsilon} \ ,$$
for any $\varepsilon \in (0,2H-\frac12)$. The same strategy and estimate apply of course to the fourth term in (\ref{expansion-delta-m}).

\smallskip

As for the first two terms, we can use identity (\ref{chen}) to write
$$\big( \partial P (X_s) \sharp \der \mathbb{X}^2_{sut} \big) Q(X_s)=\big( \partial P (X_s) \sharp \der X_{su} \big) \der X_{ut} Q(X_s) \ ,$$
and so
\begin{align*}
&\big\| -\der P(X)_{su} \der X_{ut} Q(X_s)+\big( \partial P (X_s) \sharp \der \mathbb{X}^2_{sut} \big) Q(X_s)\big\|\\
&\hspace{1cm}=\big\| \big\{\der P(X)_{su}- \partial P (X_s) \sharp \der X_{su}\big\} \der X_{ut} Q(X_s)\big\| \leq c_{H,P,Q}  |t-s|^{3H} \ ,
\end{align*}
where we have combined (\ref{holder-regu}) and (\ref{basic-ineq-2}) to get the last inequality. Besides, using again (\ref{chen}), it is easy to check that
$$P(X_s) \big( \der \mathbb{X}^{2,\ast}_{sut} \sharp \partial Q(X_s) \big)=P(X_s)\der X_{ut} \big( \partial Q(X_s) \sharp \der X_{su}\big) \ ,$$
which, along the same argument as above, entails that
$$\big\| -P(X_u) \der X_{ut} \der Q(X)_{su}+P(X_s) \big( \der \mathbb{X}^{2,\ast}_{sut} \sharp \partial Q(X_s) \big)\big\| \leq c_{H,P,Q}  |t-s|^{3H} \ .$$
Going back to (\ref{expansion-delta-m}), we have thus shown that for all $0\leq s\leq u\leq t\leq 1$ and $\varepsilon \in (0,2H-\frac12)$, $\big\| \der M_{sut}\big\|\lesssim |t-s|^{3H-\varepsilon}$. Since $H>\frac13$, we are here in the very same position as in the proof of Proposition \ref{prop:int-young} (at least when picking $\varepsilon \in (0,3H-1)$), and following the same arguments (that is, combining Lemmas \ref{lem:coho} and \ref{lemma-lambda}), we can conclude about the existence of a path $\cj:[0,1]\to \ca$ such that for all $0\leq s\leq t\leq 1$, one has
$$\lim_{|\Delta_{st}|\to 0} \sum_{t_i\in \Delta_{st}}M_{t_i t_{i+1}}=\der \cj_{st}=M_{st}-\Lambda(\der M)_{st}=: \int_s^t P(X_u)dX_u Q(X_u) \ ,$$
which corresponds to the first part of our assertion.

\smallskip

The estimate (\ref{resu-conv-rough}) can again be shown along the same principles as in the proof of Proposition \ref{prop:int-young}. Just as in the latter proof, we can first decompose the approximated integral (for any fixed $n$) as
$$\int_s^t P(X^{(n)}_u) \, \mathrm{d}X^{(n)}_u \, Q(X^{(n)}_u)=M^{(n)}_{st}-\Lambda \big( \der M^{(n)}\big)_{st} \ ,$$
where $M^{(n)}$ is obtained by replacing $(X,\mathbb{X}^2)$ with $(X^{(n)},\mathbb{X}^{2,(n)})$ in (\ref{defi-m-rough}). Then, in order to control the differences $M^{(n)}-M$ and $\der M^{(n)}-\der M$, we can rely on the combination of  (\ref{fir-or-pr}), (\ref{roughness-general-1}) and (\ref{roughness-general-2}). For instance, writing
\begin{align*}
&(\partial P (X^{(n)}_s) \sharp \mathbb{X}^{2,(n)}_{st}) Q(X^{(n)}_s)-(\partial P (X_s) \sharp \mathbb{X}^2_{st}) Q(X_s)\\
&=(\{\partial P (X^{(n)}_s)-\partial P (X_s)\} \sharp \mathbb{X}^{2,(n)}_{st}) Q(X^{(n)}_s)+(\partial P (X_s) \sharp \{\mathbb{X}^{2,(n)}_{st}-\mathbb{X}^2_{st}\}) Q(X^{(n)}_s)\\
&\hspace{3cm}+(\partial P (X_s) \sharp \mathbb{X}^{2}_{st})\{Q(X^{(n)}_s)- Q(X_s)\}
\end{align*}
we can easily bound the first term using (\ref{fir-or-pr}) and (\ref{roughness-general-2}), and the last two terms using (\ref{fir-or-pr}) and (\ref{roughness-general-1}), which gives here
$$\big\| (\partial P (X^{(n)}_s) \sharp \mathbb{X}^{2,(n)}_{st}) Q(X^{(n)}_s)-(\partial P (X_s) \sharp \mathbb{X}^2_{st}) Q(X_s)\big\| \leq c_{H,P,Q,\varepsilon} \frac{|t-s|^{2H-\varepsilon}}{2^{n\varepsilon}} \ ,$$
for any $\varepsilon \in (0,2H-\frac12)$. Similar considerations allow us to control $\der M^{(n)}-\der M$ (keeping expansion (\ref{expansion-delta-m}) in mind), providing finally, for all $0\leq s\leq u\leq t\leq 1$ and $\varepsilon \in (0,2H-\frac12)$,
$$\big\|M^{(n)}_{st}-M_{st}\big\| \leq c_{H,P,Q,\varepsilon} \frac{|t-s|^{2H-\varepsilon}}{2^{n\varepsilon}}  \quad , \quad \big\|\der M^{(n)}_{sut}-\der M_{sut}\big\| \leq c_{H,P,Q,\varepsilon} \frac{|t-s|^{3H-2\varepsilon}}{2^{n\varepsilon}} \ .$$
Picking $\varepsilon \in (0,\frac12 (3H-1))$, the conclusion (that is, (\ref{resu-conv-rough})) follows from the continuity properties of $\Lambda$.

\smallskip

Once endowed with (\ref{resu-conv-rough}), and just as in the proof of Proposition \ref{prop:int-young}, identity (\ref{ito-rough}) is immediately derived from the classical differentiation rule $\der P(X^{(n)})_{st}=\int_s^t \partial P(X^{(n)}_u) \sharp \mathrm{d}X^{(n)}_u$. 
\end{proof}

\smallskip

\begin{remark}\label{link-h-1-2}
In both Propositions \ref{main-proposition} and \ref{prop:int-gen}, we could also have included (without any change in the statements and their proofs) the situation where $H\in [\frac12,1)$. In fact, when doing so, the resulting interpretation of the integral happens to be consistent with the previous constructions, that is with the interpretations of Proposition \ref{prop:int-young} and Proposition \ref{prop:int-free}. When $H>\frac12$, we can rely on (\ref{roughness-general-1}) to assert that, as $|\Delta_{st}|\to 0$,
$$\sum_{t_i\in \Delta_{st}} \Big\{ (\partial P (X_{t_i}) \sharp \mathbb{X}^2_{t_it_{i+1}}) Q(X_{t_i})+P(X_{t_i})(\mathbb{X}^{2,\ast}_{t_it_{i+1}} \sharp \partial Q(X_{t_i}))\Big\} \to 0 \ ,$$
so that the limit of the sum in (\ref{corrected-riemann}) indeed reduces to the limit of the classical Riemann sums in (\ref{riemann}). When $H=\frac12$, this consistency property is a consequence of Corollary 4.13 and Proposition 5.6 in \cite{deya-schott}: according to the latter results, the limit of the corrected Riemann sums (\ref{corrected-riemann}) more specifically coincides with the Stratonovich integral defined through (\ref{strato-free}). 
\end{remark}

\subsection{Rougher situations}\label{subsec:rougher-situations}

\

\smallskip

At this point, the - theoretical! - extension of our construction procedure to smaller $H$ should certainly be clear to the reader: for $H\in (\frac14,\frac13]$ (and then $H\in (\frac15,\frac14]$, ...), we formally expand the integral in (\ref{integral-general}) at order $3$ (and then at order $4$,...) and study the existence of the successive \enquote{product iterated integrals} that arise in the development.

\smallskip

When $H\in (\frac14,\frac13]$, and even if we prefer to skip the examination of the related details (or perhaps postpone it to a future report), we are relatively confident about the success of the method, which should lead to a similar result as in Proposition \ref{prop:int-gen}, by considering of course third-order-corrected Riemann sums. Thus, on top of the \enquote{product L{\'e}vy areas} $\mathbb{X}^{2},\mathbb{X}^{2,\ast}$ constructed in Proposition \ref{main-proposition} (note indeed that the latter statement holds true for any $H\in (\frac14,\frac12)$), the strategy would here require us to investigate the existence of the \enquote{product L{\'e}vy volumes} above $X$, corresponding morally to the third-order iterated integrals
\begin{equation}\label{product-levy-volumes}
\int_{(u,v,w)\in \mathcal{D}^{(i)}_{s,t}}dX_u U dX_v V dX_w \ , 
\end{equation}
for $U,V$ fixed in $\ca_s$, and where the domains $\mathcal{D}^{(i)}_{s,t}$ ($i=1,\ldots,6$) correspond to the six ordered sets composing $[s,t]^3$ (for instance, $\mathcal{D}^{(1)}_{s,t}=\{s\leq u\leq v\leq w\leq t\}$, $\mathcal{D}^{(2)}_{s,t}=\{s\leq u\leq w\leq v\leq t\}$,...). We think that the study of the integrals in (\ref{product-levy-volumes}) can certainly be done along the arguments of the proof of Proposition \ref{main-proposition} (observe for instance that the important controls on the covariances in Lemma \ref{lem:estim-cova-1}, Lemma \ref{lem:estim-cova-2} and Corollary \ref{cor:estim-cova-2-gene} are valid for any $H>\frac14$), but of course this still needs to be checked through a careful analysis.

\smallskip

What we rather would like to point out is the fact that this construction procedure is actually doomed to failure as soon as $H\leq \frac14$, which can be directly seen at second order, that is at the level of the product L{\'e}vy area:

\begin{proposition}\label{prop:non-convergence}
In a non-commutative probability space $(\ca,\vp)$, consider a NC-fractional Brownian motion $\{X_t\}_{t\geq 0}$ of Hurst index $H\leq \frac14$, and let $(X^{(n)},\mathbb{X}^{2,(n)})_{n\geq 0}$ be defined through formulas (\ref{approx-x}) and (\ref{levy-area-appr}). Then it holds that
\begin{equation}\label{lower-bound}
\vp\big( \big(\mathbb{X}^{2,(n+1)}_{01}[1]-\mathbb{X}^{2,(n)}_{01}[1]\big) \big(\mathbb{X}^{2,(n+1)}_{01}[1]-\mathbb{X}^{2,(n)}_{01}[1] \big)^\ast \big) \geq c \, 2^{n(1-4H)} \ ,
\end{equation}
for some strictly positive constant $c$. In particular, the sequence $\mathbb{X}^{2,(n)}_{01}[1]$ does not converge in $(\ca,\|.\|)$ as $n$ tends to infinity.
\end{proposition}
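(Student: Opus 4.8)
The plan is to turn the left-hand side of (\ref{lower-bound}) into an explicit scalar quantity via the non-commutative Wick formula (\ref{form-wick}), and to extract from it a non-vanishing leading term of order $2^{n(1-4H)}$. First I would put the increment $D_n:=\mathbb{X}^{2,(n+1)}_{01}[1]-\mathbb{X}^{2,(n)}_{01}[1]$ into a tractable form. Each level-$n$ interval $[t^n_i,t^n_{i+1}]$ is split at its midpoint $t^{n+1}_{2i+1}$ into two halves; writing $A_i:=X_{t^{n+1}_{2i+1}}-X_{t^n_i}$ and $B_i:=X_{t^n_{i+1}}-X_{t^{n+1}_{2i+1}}$ for the two (self-adjoint) half-increments, the comparison of (\ref{levy-area-commutation}) at levels $n$ and $n+1$ amounts to $[X_{t^n_i},X_{t^{n+1}_{2i+1}}]+[X_{t^{n+1}_{2i+1}},X_{t^n_{i+1}}]-[X_{t^n_i},X_{t^n_{i+1}}]$ on each interval, and by bilinearity and antisymmetry of the bracket this collapses to the clean identity
\begin{equation*}
D_n=\frac12\sum_{i=0}^{2^n-1}[A_i,B_i]\ ,\qquad [A_i,B_i]:=A_iB_i-B_iA_i\ .
\end{equation*}
Since $A_i,B_i$ are self-adjoint, each $[A_i,B_i]$ is anti-self-adjoint, so $D_n^\ast=-D_n$ and the quantity to estimate is $\vp(D_nD_n^\ast)=-\vp(D_n^2)=-\tfrac14\sum_{i,j}\vp\big([A_i,B_i][A_j,B_j]\big)$.

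Next I would compute each $\vp([A_i,B_i][A_j,B_j])$ by (\ref{form-wick}): the family of all increments of $X$ is semicircular, so every such term expands over the two non-crossing pairings of $\{1,2,3,4\}$. A short bookkeeping shows that the contributions carrying the within-interval covariance $\vp(A_iB_i)$ cancel, leaving
\begin{equation*}
\vp\big([A_i,B_i][A_j,B_j]\big)=2\big[\vp(A_iB_j)\vp(B_iA_j)-\vp(A_iA_j)\vp(B_iB_j)\big]\ .
\end{equation*}
The four covariances are those of fractional-noise increments over half-intervals of length $2^{-(n+1)}$: with $\rho(\ell):=\tfrac12\big(|\ell+1|^{2H}-2|\ell|^{2H}+|\ell-1|^{2H}\big)\,2^{-2H(n+1)}$ and $k:=i-j$, one gets $\vp(A_iA_j)=\vp(B_iB_j)=\rho(2k)$, $\vp(A_iB_j)=\rho(2k-1)$, $\vp(B_iA_j)=\rho(2k+1)$. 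Grouping the $2^n-|k|$ pairs with fixed $k$ yields the exact formula
\begin{equation*}
\vp(D_nD_n^\ast)=\frac12\sum_{|k|\leq 2^n-1}(2^n-|k|)\,\Phi(k)\ ,\qquad \Phi(k):=\rho(2k)^2-\rho(2k-1)\rho(2k+1)\ .
\end{equation*}

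The diagonal $\Phi(0)=\rho(0)^2-\rho(1)^2>0$ already has the expected order $2^n\cdot2^{-4H(n+1)}\simeq 2^{n(1-4H)}$; however, the off-diagonal terms are of the \emph{same} order (one checks $\Phi(k)=O(|k|^{4H-6})$, so $\sum_k\Phi(k)\propto 2^{-4H(n+1)}$ and $2^n\sum_k\Phi(k)\simeq 2^{n(1-4H)}$), and moreover $\Phi(k)<0$ for large $|k|$. Thus the whole point is to show that the (absolutely convergent) series $\Sigma:=2^{4H(n+1)}\sum_{k\in\mathbb{Z}}\Phi(k)$ is \emph{strictly positive}, and this is the main obstacle. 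I would settle it spectrally: writing $\rho(\ell)=2^{-2H(n+1)}\int_{-\pi}^{\pi}e^{i\ell\lambda}f(\lambda)\,d\lambda$ with $f\geq0$ the (nonnegative) spectral density of fractional Gaussian noise, one finds $\Phi(k)\propto\iint e^{2ik(\lambda+\mu)}\big(1-e^{i(\mu-\lambda)}\big)f(\lambda)f(\mu)\,d\lambda\,d\mu$; summing the geometric factor over $k$ concentrates $\lambda+\mu$ on $\pi\mathbb{Z}$, and on the slices $\mu=-\lambda$ and $\mu=\pi-\lambda$ the integrand reduces, after the odd parts cancel, to a positive multiple of $\int 2\sin^2\lambda\,f(\lambda)^2\,d\lambda\geq0$ and $\int 2\cos^2\lambda\,f(\lambda)f(\pi-\lambda)\,d\lambda\geq0$ respectively, the first being strictly positive. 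Hence $\Sigma>0$.

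Finally, splitting $\vp(D_nD_n^\ast)=\tfrac12\,2^n\sum_{|k|<2^n}\Phi(k)-\tfrac12\sum_{|k|<2^n}|k|\Phi(k)$, the first piece is $\sim\tfrac12\,2^{-4H}\Sigma\,2^{n(1-4H)}$ while the second is $O(2^{-4Hn})$, which is negligible once $H\leq\frac14$; this yields (\ref{lower-bound}) for all large $n$ (and, after shrinking $c$, for every $n$). The non-convergence is then immediate: by (\ref{trace-norm-1}) one has $\|D_n\|^2\geq\vp(D_nD_n^\ast)\geq c\,2^{n(1-4H)}\geq c>0$, so $(\mathbb{X}^{2,(n)}_{01}[1])_{n\geq0}$ is not a Cauchy sequence in $(\ca,\|.\|)$.
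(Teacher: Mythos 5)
Your algebraic reduction is exactly the paper's. The identity $D_n=\tfrac12\sum_i[A_i,B_i]$ is (\ref{main-term-area}) specialised to $U=1$, $k=0$, $\ell=2^n$ (with $A_i=Y^{(n)}_{2i}$, $B_i=Y^{(n)}_{2i+1}$); the Wick expansion, the cancellation of the within-interval covariances $\vp(A_iB_i)$, and the resulting exact formula $\vp(D_nD_n^\ast)=\tfrac12\sum_{|k|<2^n}(2^n-|k|)\Phi(k)$ reproduce the paper's $\Delta^{(n)}(i,j)$ and formula (\ref{m-n-bis}), since $\Phi(k)=\tfrac{1}{4\cdot 2^{4H(n+1)}}\Gamma_H(2|k|)$. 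The two proofs diverge only at the genuinely delicate step, namely the strict positivity of this signed sum. The paper settles it with one elementary estimate: Lemma \ref{lem:funct-f-h} yields $|\Gamma_H(2k)|\leq \tfrac{1}{4k^3}$ (this is precisely where $H\leq\tfrac14$, i.e.\ $4H-4\leq -3$, is used), so the bracket in (\ref{m-n-bis}) is at least $\Gamma_H(0)-\tfrac12\sum_{k\geq 1}k^{-3}\geq 3-\tfrac12\zeta(3)>0$, \emph{uniformly in $n$}. You instead invoke the spectral density $f\geq 0$ of fractional Gaussian noise and show that the limiting constant $\Sigma=\sum_{k\in\mathbb{Z}}\Psi(k)$ is a positive combination of $\int 2\sin^2\lambda\, f(\lambda)^2\,d\lambda$ and $\int 2\cos^2\lambda\, f(\lambda)f(\pi-\lambda)\,d\lambda$. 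This is a legitimate and more conceptual route (it identifies the limit constant and explains \emph{why} it is positive, in the spirit of variance computations for quadratic variations of fGn), at the price of heavier analytic machinery; the paper's route buys a completely explicit, $n$-uniform constant with four lines of calculus.

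Three points in your version need firming up before it is a proof. First, the decay $\Phi(k)=O(|k|^{4H-6})$ rests on the cancellation of the leading $k^{4H-4}$ terms between $\rho(2k)^2$ and $\rho(2k-1)\rho(2k+1)$; this is true but requires a two-term asymptotic expansion of the second difference of $|\ell|^{2H}$, not just the leading order (note the paper only proves and only needs the cruder $O(k^{4H-4})=O(k^{-3})$ bound). Second, the step where ``summing the geometric factor over $k$ concentrates $\lambda+\mu$ on $\pi\mathbb{Z}$'' is a Poisson-summation/periodisation argument that must be justified using the absolute summability of $\Phi$ and the continuity of the periodised slice function; as written it is formal. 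Third, your final reduction gives (\ref{lower-bound}) only for $n\geq n_0$, and ``shrinking $c$'' to cover $n<n_0$ silently requires $\vp(D_nD_n^\ast)>0$ for each of the finitely many remaining $n$, which the perturbative splitting does not provide. This last gap is fixable within your own framework: writing $\sum_{|k|<2^n}(2^n-|k|)\Psi(k)$ as $2^n$ times the integral of the (nonnegative) Fej\'er kernel against the nonnegative slice function gives strict positivity for every $n$ at once — but that is an additional argument you would have to supply.
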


\smallskip

The details of the proof of this proposition can be found in the subsequent Section \ref{sec:proof-main-result-2}.

\smallskip

Going back to the interpretation in Remark \ref{rk:commutator}, and especially to (\ref{levy-area-commutation}), we can thus consider that when  $H\leq \frac14$, the NC-fBm is too \enquote{locally non-commutative} for the sequence of approximated Lévy areas to converge, and accordingly for our rough-path approach to work.

\smallskip

From a technical point of view, and although we are dealing with a quite different object here, this non-convergence phenomenon is somehow similar to the issue one must face, in classical probability theory, when considering the non-diagonal entries of the L{\'e}vy-area matrix above a standard $2$-dimensional fractional Brownian motion, that is (morally) the integral $\int_s^t \der B^{(1)}_{su} \, dB^{(2)}_u$, where $B^{(1)},B^{(2)}$ stand for independent fractional Brownian motions of common Hurst index $H\leq \frac14$, defined on a classical probability space $(\Omega,\mathcal{F},\mathbb{P})$. It is indeed a well-known fact (see for instance \cite[Proposition 30]{coutin-qian}) that the corresponding sequence of approximated L{\'e}vy areas, derived from some \enquote{canonical} approximation $(B^{(1),n},B^{(2),n})$ of $(B^{(1)},B^{(2)})$, also fails to convergence (even in probability) as $n\to \infty$.

\smallskip

In the latter commutative setting, i.e. when working with the above integral $\int_s^t \der B^{(1)}_{su} dB^{(2)}_u$ for $H\leq \frac14$, a possible way to overcome the non-convergence issue  is to extend our interpretation of iterated integrals at a more abstract level, by considering the so-called class of non-geometric rough paths, and then use this additional flexibility to exhibit a suitable object above the process. Such a (highly abstract and sophisticated) procedure has for instance been implemented in \cite{nualart-tindel}. At this stage, we must admit that we have no idea whether such considerations could be adapted in the non-commutative framework to handle the product L{\'e}vy area $\int_s^t \der X_{su} U dX_u$.

\

\subsection{Possible extensions of these considerations}

\

\smallskip

As a conclusion to our investigations, and before we turn to the technical proofs of Proposition \ref{main-proposition} and Proposition \ref{prop:non-convergence}, let us briefly outline a few possible extensions of this approach to non-commutative integration, together with related open questions.

\subsubsection{Beyond polynomial integration}\label{subsec:beyond-poly}

A first general question is whether this strategy could be extended to a larger class of integrands $Y,Z:[0,1]\to \ca$ (instead of $P(X),Q(X)$) in (\ref{integral-general}). Recall that we have already addressed this issue in the Young case $H>\frac12$ (see the end of Section \ref{subsec:young}), while such an extension can indeed be obtained in the free case $H=\frac12$ using the It{\^o}-type approach developed by Biane and Speicher (see \cite{biane-speicher} for further details).

\smallskip

The rough situation $H<\frac12$ turns out to be more problematic in this regard. In view of the above developments, a first essential question here is to know whether the definition of the \enquote{product L{\'e}vy area} $\mathbb{X}^2_{st}\big[ U\big]$ in Proposition \ref{main-proposition} could be extended to more general $U$, that is beyond polynomial expressions of $\{X_r\}_{0\leq r\leq s}$. 

\smallskip

Based on (\ref{roughness-general-1}), a possible line of generalization involves elements $U$ of the form $U:=f(X_r)$, where $r\in [0,s]$ and $f$ is a function defined through the Fourier transform $f(x)=\int_{\R} e^{\imath \xi x}\mu_f(\mathrm{d}\xi)$ and satisfying $\int_{\R} e^{2|\xi|}\mu_f(\mathrm{d}\xi)$. Indeed, at least at some formal level, we have, for such a function $f$ and for every $\ga \in (0,H)$,
\begin{eqnarray*}
\big\| \mathbb{X}^2_{st}\big[ f(X_r)\big]\big\|&\leq &\sum_{m\geq 0} \bigg| \int_{\R} \frac{(\imath \xi)^m}{m!} \mu_f(\mathrm{d}\xi) \bigg| \big\| \mathbb{X}^2_{st}\big[ X_r^m\big]\big\|\\
&\leq&c_\ga |t-s|^{2\ga} \int_{\R} \bigg( \sum_{m\geq 0} \frac{|2\xi|^m}{m!}\bigg) \mu_f(\mathrm{d}\xi) \ \leq \ c_\ga |t-s|^{2\ga} \int_{\R} e^{2|\xi|}\mu_f(\mathrm{d}\xi) \ ,
\end{eqnarray*}
which still offers the required Hölder control, and thus opens a way toward the interpretation of the integral $\int_s^t g(X_u) dX_u h(X_u)$, for smooth enough functions $g,h$.

\smallskip

Then a natural attempt to go further would be to turn to the setting introduced in \cite[Section 4]{deya-schott}, and allowing for the consideration of the more flexible class of \textit{adapted controlled biprocesses} (along \cite[Definition 4.9]{deya-schott}). Unfortunately, as we already mentionned it twice, the estimates we have obtained in Proposition \ref{main-proposition} are not sufficient for a direct application of the results of \cite{deya-schott}. In other words, the operator $\mathbb{X}^2$ derived from our result is not as general as a genuine \textit{product Lévy area}, in the specific sense of \cite[Definition 4.4]{deya-schott}. Indeed, such a product Lévy area is expected to satisfy, for all $0\leq s\leq t\leq 1$ and $U\in \ca_s$,
$$\big\| \mathbb{X}^2_{st}\big[ U\big] \big\| \leq c_\ga |t-s|^{2\ga} \|U\| \ ,$$
for some $\ga>\frac13$, which is more general than our estimate (\ref{roughness-general-1}) (with $N=0$). Morally, we would here need the right-hand side of (\ref{roughness-general-1}) to be uniformly bounded over $m$, or rather the $2^m$ factor in (\ref{roughness-general-1}) to disappear. This property can indeed be checked in the free Brownian case $H=\frac12$, owing to the free independence of the disjoint increments. When $H<\frac12$, and in light of the expressions at stake in the proof of Proposition \ref{main-proposition} (see e.g. the quantities involved in (\ref{interm-quant-2})), we must say that we have serious doubts about the existence of such a uniform estimate. 

\

Let us now evoke some possible extensions at the level of the driving process itself.

\vspace{-0.35cm}

\subsubsection{More general semicircular processes}
In the standard probability setting, the rough-path approach is known to be applicable to a class of Gaussian processes that goes beyond the fractional Brownian motion (see e.g. \cite{friz-victoir-gauss} or \cite[Chapter 15]{friz-victoir-book}), and therefore we may wonder about the existence of such an extension in the non-commutative framework. 

\smallskip

Skimming through the proof of Proposition \ref{main-proposition}, the specific involvement of the covariance of the process (here, the fractional covariance (\ref{cova-NC-fBm})) is actually easy to locate. Namely, we only use the form of this covariance within the estimates of the subsequent Lemmas \ref{lem:estim-cova-1} and \ref{lem:estim-cova-2}, and as a consequence, the developments and results of Section \ref{subsec:first-rough-case} would remain true for any (Hölder) semicircular process satisfying these two estimates.

\smallskip

This being said, at this point, we are far from being able to exhibit a similar general (and essentially sharp) covariance criterion as in \cite[Theorem 15.33]{friz-victoir-book}.

\subsubsection{The $q$-fractional Brownian motion}\label{subsec:q-fbm}
It is a well-known fact in the non-commutative-probability literature (see e.g. \cite{q-gauss}) that the semicircular processes are part of a more general class, the so-called $q$-Gaussian processes (for fixed $q\in (-1,1)$), defined through the \enquote{$q$-Wick} formula 
\begin{equation}\label{form-q-wick}
\vp\big( X_{i_1}\cdots X_{i_r}\big)=\sum_{\pi \in \mathcal{P}_2(\{1,\ldots,r\})} q^{\text{Cr}(\pi)}\ka_\pi\big( X_{i_1},\ldots,X_{i_r}\big) \ ,
\end{equation} 
where, in comparison with (\ref{form-wick}), the sum runs this time over the set of all pairings of $\{1,\ldots,r\}$, and the notation $\text{Cr}(\pi)$ refers to the number of crossings in $\pi$ (the semicircular processes are thus nothing but the $0$-Gaussian processes).

\smallskip

Along this line of generalization, and for fixed $q\in (-1,1), H\in (0,1)$, we can then naturally define the $q$-fractional Brownian motion ($q$-fBm) of Hurst index $H$, the above NC-fBm corresponding to the $0$-fBm. In \cite{deya-schott-2}, we have already applied the rough-path strategy to the $q$-Brownian motion, i.e. the $q$-fBm of Hurst index $H=\frac12$, which, at least in the case $q\in [0,1)$, led us to better controls and approximation results than in the original It{\^o}-type approach of the situation (\cite{donati}). 

\smallskip

As regards the $q$-fBm $X=X^{(q,H)}$ of Hurst index $H\neq \frac12$, observe first that we are still dealing with a $H$-Hölder process (for any fixed $q\in (-1,1)$), since, with the argument of the proof of Lemma \ref{lem:holder-regu} in mind, we have here 
$$\vp\big( (X_t-X_s)^{2r} \big)^{1/(2r)}=|t-s|^{2H} \Big(\sum_{\pi \in \mathcal{P}_2(\{1,\ldots,2r\})} q^{\text{Cr}(\pi)}\Big)^{1/(2r)} \to \frac{2|t-s|^{2H}}{\sqrt{1-q}} \quad \text{as} \ r\to \infty \ .$$
When $H>\frac12$, this basic regularity property immediately allows us to mimic the Young procedure of Section \ref{subsec:young}. As for the (more interesting) case $H<\frac12$, we must say we are rather confident about the possibility to extend the considerations of both Section \ref{subsec:first-rough-case} and Section \ref{subsec:rougher-situations} to any $q\in (-1,1)$, with a similar \enquote{success} for $H>\frac14$ and \enquote{failure} for $H\leq \frac14$. Of course, this involves a careful adaptation of the proofs of Propositions \ref{main-proposition} and \ref{prop:non-convergence}, taking the $q$-parameter into account, and we expect both the upper bounds in (\ref{roughness-general-1})-(\ref{roughness-general-2}) and the lower bound in (\ref{lower-bound}) to depend on $q$ as well.

\section{Proof of Proposition \ref{main-proposition}}\label{sec:proof-main-result-1}

\begin{proof}[Proof of Proposition \ref{main-proposition}] Throughout the proof, we will denote by $A\lesssim B$ any bound of the form $A\leq c B$, where $c$ is a universal constant independent from the parameters under consideration.

\smallskip

Our main task will be to find a suitable estimate of the difference
$$\mathbb{X}^{2,(n+1)}_{st}[U]-\mathbb{X}^{2,(n)}_{st}[U] \ ,$$
for $0\leq n\leq N-1$, $0\leq s\leq t\leq 1$ and $U\in \ca_s$.

\smallskip

If $|t-s|\leq 2^{-n+1}$, then we can check explicitly (see the beginning of the proof of \cite[Theorem 3.1]{deya-schott-2} for details) that 
\begin{equation}\label{trivial-case}
\big\| \mathbb{X}^{2,(n+1)}_{st}[U]-\mathbb{X}^{2,(n)}_{st}[U]\big\| \lesssim \frac{|t-s|^{(2H-\varepsilon)}}{2^{n\varepsilon}} \|U\|\ .
\end{equation}
Let us assume from now on that
\begin{equation}\label{choice-s-t}
t_{k-1}^n \leq s<t_k^n <t_\ell^n \leq t<t_{\ell+1}^n \quad \text{with} \quad \ell-k\geq 1 \ .
\end{equation}
Using the first-order controls (\ref{fir-or-pr}) only, it can be shown that (see again the beginning of the proof of \cite[Theorem 3.1]{deya-schott-2} for details) 
\begin{equation}\label{reduction}
\big\| \big\{\mathbb{X}^{2,(n+1)}_{st}[U]-\mathbb{X}^{2,(n)}_{st}[U]\big\}-\big\{\mathbb{X}^{2,(n+1)}_{t_k^n t_\ell^n}[U]-\mathbb{X}^{2,(n)}_{t_k^n t_\ell^n}[U]\big\}\big\| \lesssim \frac{|t-s|^{(2H-\varepsilon)}}{2^{n\varepsilon}} \|U\|\ ,
\end{equation}
and we are thus left with the estimation of $\mathbb{X}^{2,(n+1)}_{t_k^n t_\ell^n}[U]-\mathbb{X}^{2,(n)}_{t_k^n t_\ell^n}[U]$. Setting $Y_i=Y_{i}^{(n)}:=\delta X_{t_{i}^{n+1}t_{i+1}^{n+1}}$, this difference actually reduces to 
\begin{align}
\mathbb{X}^{2,(n+1)}_{t_k^n t_\ell^n}[U]-\mathbb{X}^{2,(n)}_{t_k^n t_\ell^n}[U]&=\int_{t_{k}^n}^{t_\ell^n}  \delta X^{(n+1)}_{t_{k}^n u}U \mathrm{d}X^{(n+1)}_u-\int_{t_{k}^n}^{t_\ell^n} \delta X^{(n)}_{t_{k}^nu}U \mathrm{d}X^{(n)}_u\nonumber\\
&=\frac12 \sum_{i=k}^{\ell-1} \big[  Y_{2i} U Y_{2i+1}-  Y_{2i+1} U Y_{2i} \big] \ .\label{main-term-area}
\end{align}
Keeping in mind the desired estimates (\ref{roughness-general-1})-(\ref{roughness-general-2}), we henceforth consider $U$ of the two following possible forms:
\begin{align*}
&\textbf{Situation A:} \quad U:=U_1 \cdots U_m \ , \quad m\geq 1 \ , \quad U_j:=\der X_{u_j v_j}  \ , \ 0\leq u_j\leq v_j\leq s \ ;\\
&\textbf{Situation B:} \quad U:=U_1 \cdots U_\iota \cdots U_m \ , \quad 1\leq \iota\leq m \ , \\
&\hspace{4cm}U_j:=\der X^{(N_j)}_{u_j v_j}  \ \ \text{for} \ j \neq \iota \ , \ U_\iota:=\der(X^{(N_\iota)}-X)_{u_\iota v_\iota} \ , \ \ 0\leq u_j\leq v_j\leq s \ .
\end{align*}
For each of these two situations, our aim is thus to estimate (due to (\ref{trace-norm-2}))
\begin{align*}
&\bigg\|\sum_{i=k}^{\ell-1} \big[  Y_{2i} U Y_{2i+1}-  Y_{2i+1} UY_{2i} \big]\bigg\|\\
&=\lim_{r\to \infty}\vp\bigg( \bigg( \bigg(\sum_{i=k}^{\ell-1} \big[  Y_{2i} U Y_{2i+1}-  Y_{2i+1} UY_{2i}\big]\bigg) \bigg(\sum_{i=k}^{\ell-1} \big[  Y_{2i}U Y_{2i+1}-  Y_{2i+1}U Y_{2i}\big]\bigg)^{\!\!\! \ast} \bigg)^r \bigg)^{1/(2r)} \ . 
\end{align*}
For the sake of clarity, let us introduce the additional notations: for all $V,V_1,\ldots,V_m\in \ca$,
$$\mathbf{Y}_{1,i}[V]:=Y_{2i}VY_{2i+1} \quad , \quad \mathbf{Y}_{-1,i}[V]:=Y_{2i+1}VY_{2i}$$
and
$$\mathbb{Y}_{1,i}[V_1,\ldots,V_m]:=(Y_{2i},V_1,\ldots,V_m,Y_{2i+1}) \quad , \quad \mathbb{Y}_{-1,i}[V_1,\ldots,V_m]:=(Y_{2i+1},V_1,\ldots,V_m,Y_{2i}) \ .$$
The above $r$-th moment can then be expanded as
\begin{align*}
&(-1)^r \sum_{i_1,\ldots,i_{2r}} \vp\big( \big\{\big[ \mathbf{Y}_{1,i_1}(U)-\mathbf{Y}_{-1,i_1}(U)\big]\big[ \mathbf{Y}_{1,i_2}(U^\ast)-\mathbf{Y}_{-1,i_2}(U^\ast)\big]\big\} \cdots\\
&\hspace{4cm} \big\{\big[ \mathbf{Y}_{1,i_{2r-1}}(U)-\mathbf{Y}_{-1,i_{2r-1}}(U)\big]\big[ \mathbf{Y}_{1,i_{2r}}(U^\ast)-\mathbf{Y}_{-1,i_{2r}}(U^\ast)\big]\big\} \big)\\
&=(-1)^r \sum_{\si\in \{-1,1\}^{2r}} (-1)^{N(\si)} \sum_{i_1,\ldots ,i_{2r}} \vp\big( \big( \mathbf{Y}_{\si_1,i_1}[U]\mathbf{Y}_{\si_2,i_2}[U^\ast] \big) \cdots \big( \mathbf{Y}_{\si_{2r-1},i_{2r-1}}[U]\mathbf{Y}_{\si_{2r},i_{2r}}[U^\ast] \big) \big) \ ,
\end{align*}
where $N(\si)$ denotes the number of $(-1)$ in $\si$. At this point, recall that $(Y_i,U_j)$ is a semicircular family, and so, by (\ref{form-wick}), we can go ahead with our expansion and write the previous quantity as
\begin{multline}\label{interm-quant}
(-1)^r \sum_{\si\in \{-1,1\}^{2r}} (-1)^{N(\si)}\\ \sum_{i_1,\ldots ,i_{2r}}\sum_{\pi \in NC_2(2r(m+2))}
 \ka_\pi\big( \big(\mathbb{Y}_{\si_1,i_1}[\mathbb{U}],\mathbb{Y}_{\si_2,i_2}[\mathbb{U}^\ast]\big),\ldots, \big(\mathbb{Y}_{\si_{2r-1},i_{2r-1}}[\mathbb{U}],\mathbb{Y}_{\si_{2r},i_{2r}}[\mathbb{U}^\ast]\big)\big) \ ,
\end{multline}
where we have set $\mathbb{U}:=(U_1,\ldots,U_m)$ and $\mathbb{U}^\ast:=(U_m,\ldots,U_1)$.

\smallskip

Let us now consider the subset of $NC_2(2r(m+2))$ given by 
$$\mathcal{E}_1:=\{\pi \in NC_2(2r(m+2)): \, (1,m+2)\in \pi\} \ .$$
With expression (\ref{interm-quant}) in mind, $\ce_1$ thus corresponds to the set of pairings $\pi$ for which the variables $Y_{2i_1}$ and $Y_{2i_1+1}$ are \enquote{connected} within $\ka_\pi(...)$ (recall notation (\ref{notation:vp-pi})). Observe in particular that, for fixed $\si$ and $i_1,\ldots,i_{2r}$,
\begin{align*}
&\sum_{\pi \in \ce_1}  \ka_\pi\big( \big(\mathbb{Y}_{1,i_1}[\mathbb{U}],\mathbb{Y}_{\si_2,i_2}[\mathbb{U}^\ast]\big),\ldots, \big(\mathbb{Y}_{\si_{2r-1},i_{2r-1}}[\mathbb{U}],\mathbb{Y}_{\si_{2r},i_{2r}}[\mathbb{U}^\ast]\big)\big)\\
&=\vp\big( Y_{2i_1}Y_{2i_1+1}\big)\vp\big( U\big)\\
&\hspace{0.5cm} \sum_{\pi \in NC_2((2r-1)(m+2))} \ka_\pi\big(\mathbb{Y}_{\si_2,i_2}[\mathbb{U}^\ast],\big(\mathbb{Y}_{\si_3,i_3}[\mathbb{U}],\mathbb{Y}_{\si_4,i_4}[\mathbb{U}^\ast]\big),\ldots, \big(\mathbb{Y}_{\si_{2r-1},i_{2r-1}}[\mathbb{U}],\mathbb{Y}_{\si_{2r},i_{2r}}[\mathbb{U}^\ast]\big)\big) \\
&=\sum_{\pi \in \ce_1}  \ka_\pi\big( \big(\mathbb{Y}_{-1,i_1}[\mathbb{U}],\mathbb{Y}_{\si_2,i_2}[\mathbb{U}^\ast]\big),\ldots, \big(\mathbb{Y}_{\si_{2r-1},i_{2r-1}}[\mathbb{U}],\mathbb{Y}_{\si_{2r},i_{2r}}[\mathbb{U}^\ast]\big)\big) \ ,
\end{align*}
and as result
\begin{align*}
&\sum_{\si\in \{-1,1\}^{2r}} (-1)^{N(\si)} \sum_{i_1,\ldots ,i_{2r}}\sum_{\pi \in \ce_1}
 \ka_\pi\big( \big(\mathbb{Y}_{\si_1,i_1}[\mathbb{U}],\mathbb{Y}_{\si_2,i_2}[\mathbb{U}^\ast]\big),\ldots, \big(\mathbb{Y}_{\si_{2r-1},i_{2r-1}}[\mathbb{U}],\mathbb{Y}_{\si_{2r},i_{2r}}[\mathbb{U}^\ast]\big)\big)\\
&\hspace{2cm}=\sum_{\substack{\si\in \{-1,1\}^{2r}\\ \si_1=1}} \ldots+\sum_{\substack{\si\in \{-1,1\}^{2r}\\ \si_1=-1}}\ldots \ = \ 0\ .\\
\end{align*}
Along the same idea, consider the subset 
$$\ce_2:=\{\pi \in NC_2(2r(m+2)): \, (1,m+2) \notin \pi \, , \, (m+3,2(m+2))\in \pi\} \ ,$$
so that, just as above,
\begin{align*}
&\sum_{\pi \in \ce_2}  \ka_\pi\big( \big(\mathbb{Y}_{\si_1,i_1}[\mathbb{U}],\mathbb{Y}_{1,i_2}[\mathbb{U}^\ast]\big),\ldots, \big(\mathbb{Y}_{\si_{2r-1},i_{2r-1}}[\mathbb{U}],\mathbb{Y}_{\si_{2r},i_{2r}}[\mathbb{U}^\ast]\big)\big)\\
&=\vp\big( Y_{2i_2}Y_{2i_2+1}\big)\vp\big( U^\ast\big)\\
&\hspace{0.5cm} \sum_{\pi \in \tilde{\ce}_2} \ka_\pi\big(\mathbb{Y}_{\si_1,i_1}[\mathbb{U}],\big(\mathbb{Y}_{\si_3,i_3}[\mathbb{U}],\mathbb{Y}_{\si_4,i_4}[\mathbb{U}^\ast]\big),\ldots, \big(\mathbb{Y}_{\si_{2r-1},i_{2r-1}}[\mathbb{U}],\mathbb{Y}_{\si_{2r},i_{2r}}[\mathbb{U}^\ast]\big)\big) \\
&=\sum_{\pi \in \ce_2}  \ka_\pi\big( \big(\mathbb{Y}_{\si_1,i_1}[\mathbb{U}],\mathbb{Y}_{-1,i_2}[\mathbb{U}^\ast]\big),\ldots, \big(\mathbb{Y}_{\si_{2r-1},i_{2r-1}}[\mathbb{U}],\mathbb{Y}_{\si_{2r},i_{2r}}[\mathbb{U}^\ast]\big)\big) \ ,
\end{align*}
where $\tilde{\ce}_2:=\{\pi \in NC_2((2r-1)(m+2)): \, (1,m+2) \notin \pi\}$, and accordingly
\begin{align*}
&\sum_{\si\in \{-1,1\}^{2r}} (-1)^{N(\si)} \sum_{i_1,\ldots ,i_{2r}}\sum_{\pi \in \ce_2}
 \ka_\pi\big( \big(\mathbb{Y}_{\si_1,i_1}[\mathbb{U}],\mathbb{Y}_{\si_2,i_2}[\mathbb{U}^\ast]\big),\ldots, \big(\mathbb{Y}_{\si_{2r-1},i_{2r-1}}[\mathbb{U}],\mathbb{Y}_{\si_{2r},i_{2r}}[\mathbb{U}^\ast]\big)\big)\\
&\hspace{2cm}=\sum_{\substack{\si\in \{-1,1\}^{2r}\\ \si_2=1}} \ldots+\sum_{\substack{\si\in \{-1,1\}^{2r}\\ \si_2=-1}}\ldots \ = \ 0 \ .\\
\end{align*}
Iterating the procedure, we see that the quantity (\ref{interm-quant}) reduces in fact to 
\begin{align}
&(-1)^r \sum_{\pi \in \ce}\sum_{\si\in \{-1,1\}^{2r}} (-1)^{N(\si)}\nonumber\\
&\hspace{1.5cm}\sum_{i_1,\ldots ,i_{2r}}
 \ka_\pi\big( \big(\mathbb{Y}_{\si_1,i_1}[\mathbb{U}],\mathbb{Y}_{\si_2,i_2}[\mathbb{U}^\ast]\big),\ldots, \big(\mathbb{Y}_{\si_{2r-1},i_{2r-1}}[\mathbb{U}],\mathbb{Y}_{\si_{2r},i_{2r}}[\mathbb{U}^\ast]\big)\big) \ ,\label{interm-quant-2}
\end{align}
where 
$$\ce:=\{\pi\in NC_2(2r(m+2)): \, \text{for all} \ j=1,\ldots,2r \, , \, ((j-1)(m+2)+1,j(m+2))\notin \pi\} \ .$$

\

As a next step, observe that for each fixed $\pi\in \ce$ and $\si\in \{-1,1\}^{2r}$, the sum
$$\sum_{i_1,\ldots ,i_{2r}}
 \ka_\pi\big( \big(\mathbb{Y}_{\si_1,i_1}[\mathbb{U}],\mathbb{Y}_{\si_2,i_2}[\mathbb{U}^\ast]\big),\ldots, \big(\mathbb{Y}_{\si_{2r-1},i_{2r-1}}[\mathbb{U}],\mathbb{Y}_{\si_{2r},i_{2r}}[\mathbb{U}^\ast]\big)\big)$$
can always be written as a product of three terms $P_i$ of the form
$$
P_1=\prod_{j=1}^{q_1} \bigg( \sum_{i_1,\ldots,i_{p_j}} \vp\big(Z_{(j,1),i_1}Z'_{(j,2),i_2}\big) \vp\big(Z_{(j,2),i_2}Z'_{(j,3),i_3}\big)\cdots \vp\big(Z_{(j,p_j),i_{p_j}}Z'_{(j,1),i_1}\big) \bigg) \ ,
$$
$$
P_2=\prod_{j=1}^{q_2} \bigg( \sum_{i_1,\ldots,i_{r_j}} \vp\big(U_{\al_j} W'_{(j,1),i_1}\big)\vp\big(W_{(j,1),i_1}W'_{(j,2),i_2}\big) \vp\big(W_{(j,2),i_2}W'_{(j,3),i_3}\big)\cdots
 \vp\big(W_{(j,r_j),i_{r_j}}U_{\beta_j}\big) \bigg) \ ,
$$
$$
P_3=\prod_{j=1}^{q_3} \vp\big( U_{\eta_j}U_{\la_j}\big) \ ,
$$
where: 

\smallskip

\noindent
$(a)$ the integers $q_1,q_2,q_3,p_j,r_j$ are such that $(p_1+\ldots+p_{q_1})+(r_1+\ldots+r_{q_2})=2r$ and $2q_2+2q_3=2mr$;

\smallskip

\noindent
$(b)$ the variables $(Z,Z')$ (resp. $(W,W')$) are such that for all $j=1,\ldots,q_1$, $p=1,\ldots,p_j$ (resp. $j=1,\ldots,q_2$, $p=1,\ldots,r_j$) and $i=k,\ldots,\ell-1$, $\{Z_{(j,p),i},Z'_{(j,p),i}\}=\{Y_{2i},Y_{2i+1}\}$ (resp. $\{W_{(j,p),i},W'_{(j,p),i}\}=\{Y_{2i},Y_{2i+1}\}$).
\smallskip

\noindent
$(c)$ the coefficients $1\leq \al_j,\beta_j,\eta_j,\la_j \leq m$ are such that each variable $U_\al$ ($1\leq \al \leq m$) appears exactly $2r$-times in the product $P_2 P_3$.

\

Let us now bound the product $P_1P_2P_3$ in each of the two above-described situations for $U$.

\

\noindent
\textbf{Situation A.}

\smallskip

\noindent
As far as $P_1$ is concerned, we can first use the subsequent elementary Lemma \ref{lem:CS} to assert that for each $j=1,\ldots,q_1$,
\begin{align*}
&\sum_{i_1,\ldots,i_{p_j}} \vp\big(Z_{(j,1),i_1}Z'_{(j,2),i_2}\big) \vp\big(Z_{(j,2),i_2}Z'_{(j,3),i_3}\big)\cdots \vp\big(Z_{(j,p_j),i_{p_j}}Z'_{(j,1),i_1}\big)\\
&\leq \bigg[ \prod_{q=1}^{p_j-1} \bigg( \sum_{i_1,i_2} \vp\big(Z_{(j,q),i_1}Z'_{(j,q+1),i_2}\big)^2 \bigg)\bigg]^{1/2}\bigg( \sum_{i_1,i_2} \vp\big(Z_{(j,p_j),i_1}Z'_{(j,1),i_2}\big)^2 \bigg)^{1/2} \ .
\end{align*}
We are here in a position to apply Lemma \ref{lem:estim-cova-1} below and deduce that for each $j=1,\ldots,q_1$,
$$\sum_{i_1,\ldots,i_{p_j}} \vp\big(Z_{(j,1),i_1}Z'_{(j,2),i_2}\big) \vp\big(Z_{(j,2),i_2}Z'_{(j,3),i_3}\big)\cdots \vp\big(Z_{(j,p_j),i_{p_j}}Z'_{(j,1),i_1}\big) \lesssim \frac{|t-s|^{(2H-\varepsilon)p_j}}{2^{n\varepsilon p_j}} \ ,$$
and therefore
\begin{equation}\label{p-1}
P_1 \lesssim \frac{|t-s|^{(2H-\varepsilon)(p_1+\ldots+p_{q_1})}}{2^{n\varepsilon (p_1+\ldots+p_{q_1})}} \ .
\end{equation}

\

In order to estimate $P_2$, let us first write, with the help of Lemma \ref{lem:CS}, and for each $j=1,\ldots,q_2$,
\begin{align}
&\sum_{i_1,\ldots,i_{r_j}} \vp\big(U_{\al_j} W'_{(j,1),i_1}\big)\vp\big(W_{(j,1),i_1}W'_{(j,2),i_2}\big) \vp\big(W_{(j,2),i_2}W'_{(j,3),i_3}\big)\cdots
 \vp\big(W_{(j,r_j),i_{r_j}}U_{\beta_j}\big) \nonumber\\
&\leq \bigg( \sum_{i'_1,i'_{r_j}} \vp\big(U_{\al_j} W'_{(j,1),i'_1}\big)^2 \vp\big(W_{(j,r_j),i'_{r_j}}U_{\beta_j}\big)^2 \bigg)^{1/2}\nonumber\\
&\hspace{0.5cm} \bigg(\sum_{i_1,i_{r_j}} \bigg[ \sum_{i_2,\ldots,i_{r_{j-1}}}\vp\big(W_{(j,1),i_1}W'_{(j,2),i_2}\big) \vp\big(W_{(j,2),i_2}W'_{(j,3),i_3}\big)\cdots \vp\big(W_{(j,r_j-1),i_{r_j-1}}W'_{(j,r_j),i_{r_j}}\big)\bigg]^2 \bigg)^{1/2}\nonumber\\
&\leq \bigg( \sum_{i'_1} \vp\big(U_{\al_j} W'_{(j,1),i'_1}\big)^2\bigg)^{1/2} \bigg(\sum_{i'_{r_j}}\vp\big(W_{(j,r_j),i'_{r_j}}U_{\beta_j}\big)^2 \bigg)^{1/2}\bigg[ \prod_{q=1}^{r_j-1} \bigg( \sum_{i_1,i_2} \vp\big(W_{(j,q),i_1}W'_{(j,q+1),i_2}\big)^2 \bigg)\bigg]^{1/2} \ .\label{p-2-inter}
\end{align}
We can now combine the results of the subsequent Lemmas \ref{lem:estim-cova-1} and \ref{lem:estim-cova-2} to deduce that for each $j=1,\ldots,q_2$,
\begin{align*}
&\sum_{i_1,\ldots,i_{r_j}} \vp\big(U_{\al_j} W'_{(j,1),i_1}\big)\vp\big(W_{(j,1),i_1}W'_{(j,2),i_2}\big) \vp\big(W_{(j,2),i_2}W'_{(j,3),i_3}\big)\cdots
 \vp\big(W_{(j,r_j),i_{r_j}}U_{\beta_j}\big)\\
&\hspace{4cm}\lesssim |v_{\al_j}-u_{\al_j}|^{H}|v_{\beta_j}-u_{\beta_j}|^{H}\frac{|t-s|^{(2H-\varepsilon)r_j}}{2^{n\varepsilon r_j}} \ ,
\end{align*}
and as a result
\begin{equation}\label{p-2}
P_2 \lesssim \bigg( \prod_{j=1}^{q_2} |v_{\al_j}-u_{\al_j}|^{H}|v_{\beta_j}-u_{\beta_j}|^{H} \bigg) \frac{|t-s|^{(2H-\varepsilon)(r_1+\ldots+r_{q_2})}}{2^{n\varepsilon (r_1+\ldots+r_{q_2})}} \ .
\end{equation}
Finally, the estimation of $P_3$ is an immediate consequence of (\ref{holder-regu}):
\begin{equation}\label{p-3}
P_3\leq \prod_{j=1}^{q_3} \|U_{\eta_j}\| \|U_{\la_j}\| \lesssim \prod_{j=1}^{q_3} |v_{\eta_j}-u_{\eta_j}|^{H}|v_{\la_j}-u_{\la_j}|^{H} \ .
\end{equation}
Combining (\ref{p-1})-(\ref{p-2})-(\ref{p-3}) with the above constraints $(a)$-$(b)$-$(c)$, we obtain that for each fixed $\pi\in \ce$ and $\si\in \{-1,1\}^{2r}$,
\begin{align*}
&\sum_{i_1,\ldots ,i_{2r}}
 \ka_\pi\big( \big(\mathbb{Y}_{\si_1,i_1}[\mathbb{U}],\mathbb{Y}_{\si_2,i_2}[\mathbb{U}^\ast]\big),\ldots, \big(\mathbb{Y}_{\si_{2r-1},i_{2r-1}}[\mathbb{U}],\mathbb{Y}_{\si_{2r},i_{2r}}[\mathbb{U}^\ast]\big)\big)\\
&\hspace{3cm}\lesssim \frac{|t-s|^{2r(2H-\varepsilon)}}{2^{2r n\varepsilon}} \prod_{j=1}^m |v_j-u_j|^{2rH} \ .
\end{align*}
Going back to (\ref{interm-quant-2}), we have thus shown that for every $r\geq 1$,
\begin{align*}
&\vp\bigg( \bigg( \bigg(\sum_{i=k}^{\ell-1} \big[  Y_{2i}  UY_{2i+1}-  Y_{2i+1}U Y_{2i}\big]\bigg) \bigg(\sum_{i=k}^{\ell-1} \big[  Y_{2i}U  Y_{2i+1}-  Y_{2i+1}U Y_{2i}\big]\bigg)^{\!\!\! \ast} \bigg)^r \bigg)^{1/(2r)}\\
&\hspace{2cm}\lesssim \big(|NC_2(2r(m+2))|^{1/(2r(m+2))}\big)^{m+2} \frac{|t-s|^{(2H-\varepsilon)}}{2^{n\varepsilon}} \prod_{j=1}^m |v_j-u_j|^{H}\ .
\end{align*}
By letting $r$ tend to infinity, we get the desired estimate, namely
\begin{equation}\label{estim-gen}
\big\|\big\{ \mathbb{X}^{2,(n+1)}_{t_k^n t_\ell^n}-\mathbb{X}^{2,(n)}_{t_k^n t_\ell^n}\big\}\big[ \der X_{u_1 v_1} \cdots \der X_{u_m v_m}\big]\big\|\lesssim 2^m\frac{|t-s|^{(2H-\varepsilon)}}{2^{n\varepsilon}} \prod_{j=1}^m |v_j-u_j|^{H}\ .
\end{equation}
It is readily checked that the above procedure can also be applied in the case $m=0$, yielding
\begin{equation}\label{estim-part}
\big\|\big\{ \mathbb{X}^{2,(n+1)}_{t_k^n t_\ell^n}-\mathbb{X}^{2,(n)}_{t_k^n t_\ell^n}\big\}\big[ 1\big]\big\| \lesssim \frac{|t-s|^{(2H-\varepsilon)}}{2^{n\varepsilon}}\ .
\end{equation}

\

\noindent
\textbf{Situation B.}

\smallskip

\noindent
The expression of $P_1$ is of course the same as in Situation A, and thus, just as above, we have
\begin{equation}\label{p-1-b}
P_1 \lesssim \frac{|t-s|^{(2H-\varepsilon)(p_1+\ldots+p_{q_1})}}{2^{n\varepsilon (p_1+\ldots+p_{q_1})}} \ .
\end{equation}
In order to estimate $P_2$, let us first write, just as in (\ref{p-2-inter}), and for each $j=1,\ldots,q_2$,
\begin{align*}
&\sum_{i_1,\ldots,i_{r_j}} \vp\big(U_{\al_j} W'_{(j,1),i_1}\big)\vp\big(W_{(j,1),i_1}W'_{(j,2),i_2}\big) \vp\big(W_{(j,2),i_2}W'_{(j,3),i_3}\big)\cdots
 \vp\big(W_{(j,r_j),i_{r_j}}U_{\beta_j}\big) \\
&\leq \bigg( \sum_{i'_1} \vp\big(U_{\al_j} W'_{(j,1),i'_1}\big)^2\bigg)^{1/2} \bigg(\sum_{i'_{r_j}}\vp\big(W_{(j,r_j),i'_{r_j}}U_{\beta_j}\big)^2 \bigg)^{1/2}\bigg[ \prod_{q=1}^{r_j-1} \bigg( \sum_{i_1,i_2} \vp\big(W_{(j,q),i_1}W'_{(j,q+1),i_2}\big)^2 \bigg)\bigg]^{1/2} \ .
\end{align*}
We can now combine Lemma \ref{lem:estim-cova-1} and Corollary \ref{cor:estim-cova-2-gene} below to obtain that for each $j=1,\ldots,q_2$,
\begin{align*}
&\sum_{i_1,\ldots,i_{r_j}} \vp\big(U_{\al_j} W'_{(j,1),i_1}\big)\vp\big(W_{(j,1),i_1}W'_{(j,2),i_2}\big) \vp\big(W_{(j,2),i_2}W'_{(j,3),i_3}\big)\cdots
 \vp\big(W_{(j,r_j),i_{r_j}}U_{\beta_j}\big)\\
&\hspace{9cm}\lesssim \frac{|t-s|^{(2H-\varepsilon)r_j}}{2^{n\varepsilon r_j}}Q_{\al_j,\be_j} \ ,
\end{align*}
with
\begin{align*}
Q_{\al_j,\be_j}:=&\mathbf{1}_{\{\al_j\neq \iota,\beta_j\neq \iota\}} |v_{\al_j}-u_{\al_j}|^{H}|v_{\beta_j}-u_{\beta_j}|^{H} +\mathbf{1}_{\{\al_j=\iota,\beta_j= \iota\}} \frac{|v_{\iota}-u_{\iota}|^{2H-2\varepsilon'}}{2^{2n_\iota \varepsilon'}}\\
&+\mathbf{1}_{\{\al_j=\iota,\beta_j\neq \iota\}} \frac{|v_{\iota}-u_{\iota}|^{H-\varepsilon'}}{2^{n_\iota \varepsilon'}}|v_{\beta_j}-u_{\beta_j}|^{H}+\mathbf{1}_{\{\al_j\neq\iota,\beta_j= \iota\}} |v_{\al_j}-u_{\al_j}|^{H}\frac{|v_{\iota}-u_{\iota}|^{H-\varepsilon'}}{2^{n_\iota \varepsilon'}} \ ,
\end{align*}
and accordingly
\begin{equation}\label{p-2-b}
P_2 \lesssim \bigg( \prod_{j=1}^{q_2} Q_{\al_j,\be_j} \bigg) \frac{|t-s|^{(2H-\varepsilon)(r_1+\ldots+r_{q_2})}}{2^{n\varepsilon (r_1+\ldots+r_{q_2})}} \ .
\end{equation}
Finally, the estimation of $P_3$ in this situation follows from the two controls in (\ref{fir-or-pr}):
\begin{equation}\label{p-3-b}
P_3\leq \prod_{j=1}^{q_3} \|U_{\eta_j}\| \|U_{\la_j}\|\lesssim \prod_{j=1}^{q_3} Q_{\eta_j,\la_j} \ .
\end{equation}
Combining (\ref{p-1-b})-(\ref{p-2-b})-(\ref{p-3-b}) with the above constraints $(a)$-$(b)$-$(c)$, we deduce, for each fixed $\pi\in \ce$ and $\si\in \{-1,1\}^{2r}$,
\begin{align*}
&\sum_{i_1,\ldots ,i_{2r}}
 \ka_\pi\big( \big(\mathbb{Y}_{\si_1,i_1}[\mathbb{U}],\mathbb{Y}_{\si_2,i_2}[\mathbb{U}^\ast]\big),\ldots, \big(\mathbb{Y}_{\si_{2r-1},i_{2r-1}}[\mathbb{U}],\mathbb{Y}_{\si_{2r},i_{2r}}[\mathbb{U}^\ast]\big)\big)\\
&\hspace{3cm}\lesssim \frac{|t-s|^{2r(2H-\varepsilon)}}{2^{2r n\varepsilon}}\frac{|v_\iota-u_\iota|^{2r(H-\varepsilon')}}{2^{2r N_{\iota}\varepsilon'}} \prod_{\substack{j=1\\j\neq \iota}}^m |v_j-u_j|^{2rH} \ ,
\end{align*}
and we can then use the same arguments as in Situation A to derive that
\begin{align}
&\big\|\big\{ \mathbb{X}^{2,(n+1)}_{t_k^n t_\ell^n}-\mathbb{X}^{2,(n)}_{t_k^n t_\ell^n}\big\}\big[ \der X^{(N_1)}_{u_1 v_1} \cdots \der(X^{(N_\iota)}-X)_{u_\iota v_\iota} \cdots \der X^{(N_m)}_{u_m v_m}\big]\big\|\nonumber\\
&\hspace{2cm}\lesssim 2^m \frac{|t-s|^{2H-\varepsilon}}{2^{n\varepsilon}}\frac{|v_\iota-u_\iota|^{H-\varepsilon'}}{2^{N_\iota\varepsilon'}} \prod_{\substack{j=1,\ldots,m\\ j\neq \iota}} |u_j-v_j|^H\ ,\label{estim-gen-b}
\end{align}
for some proportional constant independent of $n,N_1,\ldots,N_m$.

\

\noindent
\textbf{Conclusion.}

\smallskip

First, based on (\ref{trivial-case}), (\ref{reduction}), (\ref{estim-gen}) and (\ref{estim-part}), we can assert that for all fixed $0\leq s\leq t\leq 1$ and $U\in \ca_s$, $(\mathbb{X}^{2,(n)}_{st}[U])_{n\geq 1}$ is a Cauchy sequence in $(\ca,\|.\|)$, and therefore it converges to an element $\mathbb{X}^{2}_{st}[U]$, as desired.

\smallskip

The fact that $\mathbb{X}_{st}^{2}$ is linear (as a function of $U$) follows immediately from the linearity of $\mathbb{X}_{st}^{2,(n)}$, and in the same way, identity (\ref{chen}) is a straightforward consequence of the (readily-checked) relation
$$\mathbb{X}^{2,(n)}_{st}[U]-\mathbb{X}^{2,(n)}_{su}[U]-\mathbb{X}^{2,(n)}_{ut}[U]=\der X^{(n)}_{su} U \der X^{(n)}_{ut} \ .$$
Finally, estimate (\ref{roughness-general-1}), resp. estimate (\ref{roughness-general-2}), follows at once from (\ref{trivial-case}), (\ref{reduction}) and (\ref{estim-gen}), resp. (\ref{trivial-case}), (\ref{reduction}) and (\ref{estim-gen-b}).

\end{proof}

We are now left with the proof of the few technical results related to the control of the covariances.

\begin{lemma}\label{lem:CS}
Given a finite set $I$, an integer $p\geq 1$ and real quantities $A^{(q)}_{i_1i_2}$ ($1\leq q\leq p$, $i_1,i_2\in I$), it holds that
$$\sum_{i_1,i_p\in I} \bigg[ \sum_{i_2,\ldots,i_{p-1}\in I} A^{(1)}_{i_1 i_2} A^{(2)}_{i_2 i_3} \cdots A^{(p-1)}_{i_{p-1}i_p}\bigg]^2\leq \prod_{q=1}^{p-1} \bigg(\sum_{i_1,i_2\in I} \big(A^{(q)}_{i_1i_2}\big)^2\bigg)\ ,$$
and as a particular consequence
$$\sum_{i_1,\ldots,i_{p}\in I} A^{(1)}_{i_1 i_2} A^{(2)}_{i_2 i_3} \cdots A^{(p-1)}_{i_{p-1}i_p} A^{(p)}_{i_p i_1} \leq \prod_{q=1}^{p}\bigg(\sum_{i_1,i_2\in I} \big(A^{(q)}_{i_1i_2}\big)^2\bigg)^{1/2} \ . $$
\end{lemma}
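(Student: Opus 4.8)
The plan is to read both displayed bounds as statements about the Frobenius (Hilbert–Schmidt) norm of products of the matrices $A^{(q)}:=(A^{(q)}_{i_1 i_2})_{i_1,i_2\in I}$. Writing $\|A\|_{\mathrm F}^2:=\sum_{i_1,i_2\in I}(A_{i_1 i_2})^2$ for a real matrix $A=(A_{i_1i_2})$, the bracketed quantity in the first inequality is exactly the $(i_1,i_p)$-entry of the product $A^{(1)}A^{(2)}\cdots A^{(p-1)}$, so the left-hand side equals $\|A^{(1)}\cdots A^{(p-1)}\|_{\mathrm F}^2$ and the first inequality is nothing but the submultiplicativity bound $\|A^{(1)}\cdots A^{(p-1)}\|_{\mathrm F}\leq \prod_{q=1}^{p-1}\|A^{(q)}\|_{\mathrm F}$. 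The second (``particular consequence'') inequality will then drop out of the first by a single extra application of Cauchy–Schwarz, so the heart of the matter is the submultiplicativity bound, which I would prove by induction on $p$ (understanding the statement for $p\geq 2$, since for $p=1$ the first display involves an empty matrix product and is degenerate).

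For the induction, I would set
$$S_p(i_1,i_p):=\sum_{i_2,\ldots,i_{p-1}\in I} A^{(1)}_{i_1 i_2}A^{(2)}_{i_2 i_3}\cdots A^{(p-1)}_{i_{p-1}i_p},$$
so that the target reads $\sum_{i_1,i_p\in I} S_p(i_1,i_p)^2\leq \prod_{q=1}^{p-1}\|A^{(q)}\|_{\mathrm F}^2$. The base case $p=2$ is an identity, since $S_2(i_1,i_2)=A^{(1)}_{i_1 i_2}$. For the inductive step I would use the recursion $S_{p+1}(i_1,i_{p+1})=\sum_{i_p\in I} S_p(i_1,i_p)A^{(p)}_{i_p i_{p+1}}$ and apply Cauchy–Schwarz in the summation index $i_p$:
$$S_{p+1}(i_1,i_{p+1})^2\leq \Big(\sum_{i_p\in I} S_p(i_1,i_p)^2\Big)\Big(\sum_{i_p\in I} (A^{(p)}_{i_p i_{p+1}})^2\Big).$$
The key observation is that the first factor depends only on $i_1$ and the second only on $i_{p+1}$, so summing over $(i_1,i_{p+1})$ factorizes the right-hand side as $\big(\sum_{i_1,i_p\in I} S_p(i_1,i_p)^2\big)\,\|A^{(p)}\|_{\mathrm F}^2$; inserting the induction hypothesis closes the step and yields $\prod_{q=1}^{p}\|A^{(q)}\|_{\mathrm F}^2$.

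For the second inequality I would note that its left-hand side is precisely $\sum_{i_1,i_p\in I} S_p(i_1,i_p)\,A^{(p)}_{i_p i_1}$, and one application of Cauchy–Schwarz in $(i_1,i_p)$ bounds it by $\big(\sum_{i_1,i_p\in I} S_p(i_1,i_p)^2\big)^{1/2}\|A^{(p)}\|_{\mathrm F}$; the first inequality already controls the first factor by $\prod_{q=1}^{p-1}\|A^{(q)}\|_{\mathrm F}$, giving the claimed product over $q=1,\ldots,p$. I do not expect any genuine obstacle: this is the textbook submultiplicativity of the Frobenius norm combined with Cauchy–Schwarz. The only points requiring mild care are applying Cauchy–Schwarz to the correct (last) index so that the estimate telescopes cleanly, without introducing a spurious $|I|$ factor, and correctly handling the degenerate small-$p$ indexing.
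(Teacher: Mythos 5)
Your proof is correct and follows exactly the route the paper indicates: the paper's own ``proof'' is the one-line remark that the lemma follows from an easy iteration of the Cauchy--Schwarz inequality, and your induction on $p$ (Cauchy--Schwarz in the last summation index, so that the bound telescopes) together with the final Cauchy--Schwarz in $(i_1,i_p)$ for the cyclic sum is precisely that iteration, written out in full. The Frobenius-norm/submultiplicativity framing is a clean way to package it, but it is the same argument.
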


\begin{proof}
These results can actually be shown through an easy iteration of Cauchy-Schwarz inequality (we have labeled them for the sake of clarity only). 
\end{proof}

\begin{lemma}\label{lem:estim-cova-1}
With the notations of the above proof, one has, for every $\varepsilon \in (0,2H-\frac12)$,
$$\max\bigg( \sum_{i,j=k}^{\ell-1} \vp\big(Y_{2i}Y_{2j}\big)^2,\sum_{i,j=k}^{\ell-1} \vp\big(Y_{2i}Y_{2j+1}\big)^2,\sum_{i,j=k}^{\ell-1} \vp\big(Y_{2i+1}Y_{2j+1}\big)^2\bigg) \lesssim \frac{|t-s|^{4H-2\varepsilon}}{2^{2n\varepsilon}} \ .$$
\end{lemma}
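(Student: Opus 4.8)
The plan is to compute the three covariances explicitly from (\ref{cova-NC-fBm}) and recognise them as rescaled discrete second differences of $\phi(x):=|x|^{2H}$. Writing $h:=2^{-(n+1)}$ for the length of a level-$(n+1)$ dyadic interval, one has $Y_{2i}=\der X_{t^n_i,\,t^n_i+h}$ and $Y_{2i+1}=\der X_{t^n_i+h,\,t^n_{i+1}}$. Expanding a product of two such increments via (\ref{cova-NC-fBm}), the pure powers $a^{2H},b^{2H},\dots$ cancel, leaving only $|\cdot|^{2H}$ terms; a direct computation gives, with $m:=j-i$,
$$\vp\big(Y_{2i}Y_{2j}\big)=\vp\big(Y_{2i+1}Y_{2j+1}\big)=\tfrac12 h^{2H}\big\{\phi(2m-1)+\phi(2m+1)-2\phi(2m)\big\},$$
and a similar second difference of $\phi$ centred at $2m+1$, namely $\tfrac12 h^{2H}\{\phi(2m)+\phi(2m+2)-2\phi(2m+1)\}$, for $\vp(Y_{2i}Y_{2j+1})$. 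In each case the bracket is a unit-step second difference of $\phi$.

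The key step is to estimate this second difference. For $|m|$ bounded away from $0$ the three sample points avoid the origin, so Taylor's theorem applied to $\phi$ (whose second derivative is $2H(2H-1)|x|^{2H-2}$) yields a bound $\lesssim |m|^{2H-2}$; the finitely many remaining central terms are explicit bounded constants (e.g. $\phi(-1)+\phi(1)-2\phi(0)=2$). Hence, uniformly in $m$,
$$\big|\vp\big(Y_{2i}Y_{2j}\big)\big|\lesssim h^{2H}\,(1+|m|)^{2H-2},$$
and squaring gives $\vp(Y_{2i}Y_{2j})^2\lesssim h^{4H}(1+|m|)^{4H-4}$. Since $H<\tfrac34$ one has $4H-4<-1$, so $\sum_{m\in\mathbb{Z}}(1+|m|)^{4H-4}<\infty$; this summability is the heart of the argument.

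Setting $L:=\ell-k$ (the number of summation indices), I would then bound
$$\sum_{i,j=k}^{\ell-1}\vp\big(Y_{2i}Y_{2j}\big)^2\lesssim h^{4H}\sum_{i,j=k}^{\ell-1}(1+|j-i|)^{4H-4}\leq h^{4H}\,L\sum_{m\in\mathbb{Z}}(1+|m|)^{4H-4}\lesssim 2^{-4Hn}L,$$
using $h=2^{-(n+1)}$ and that, for each fixed $m$, at most $L$ pairs $(i,j)$ have $j-i=m$. It remains to convert $2^{-4Hn}L$ into the stated bound. Under the standing configuration (\ref{choice-s-t}) one has $L\,2^{-n}=t^n_\ell-t^n_k\leq t-s$, hence $L\leq(t-s)2^n$, and also $t-s>t^n_{k+1}-t^n_k=2^{-n}$, so $x:=(t-s)2^n\geq 1$. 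Writing the target right-hand side as $(t-s)^{4H-2\varepsilon}2^{-2n\varepsilon}=x^{4H-2\varepsilon}2^{-4Hn}$ and the bound above as $2^{-4Hn}L\leq 2^{-4Hn}x$, the desired inequality reduces to $x\lesssim x^{4H-2\varepsilon}$, i.e. $x^{1-4H+2\varepsilon}\lesssim 1$. The hypothesis $\varepsilon<2H-\tfrac12$ forces $1-4H+2\varepsilon<0$, and since $x\geq 1$ a negative exponent gives $x^{1-4H+2\varepsilon}\leq 1$, closing the estimate for the first sum.

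Finally, the two sums $\sum\vp(Y_{2i}Y_{2j+1})^2$ and $\sum\vp(Y_{2i+1}Y_{2j+1})^2$ are handled verbatim: their covariances are second differences of $\phi$ of exactly the same type, with identical $|m|^{2H-2}$ decay, so the same summation and interpolation apply and yield the same bound, establishing the maximum. The main obstacle is really the second and fourth paragraphs combined: pinning down the exact second-difference structure together with its $|m|^{2H-2}$ decay (so that the square is summable precisely because $H<\tfrac34$), and then balancing the three exponents — the $4H$ from $h$, the single power of $L$, and the regime constraints $L\leq(t-s)2^n$, $(t-s)2^n\geq 1$ — against the target, where the role of the hypothesis $\varepsilon<2H-\tfrac12$ becomes essential.
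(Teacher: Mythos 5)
Your proof is correct, and it reaches the stated bound by a slightly different (and arguably more elementary) route than the paper. The paper also begins from the covariance of disjoint dyadic increments, but it encodes the second-difference structure as the double integral $\int\!\!\int (\tau_1-\tau_2)^{2H-2}\,d\tau_1 d\tau_2$ and then performs an $\varepsilon$-weighted splitting \emph{inside} the integrals (writing $(\tau_1-\tau_2)^{2H-2}\le \tau_1^{\varepsilon-1}\tau_2^{2H-1-\varepsilon}$), which leads to the intermediate bound $\sum_{j\le L} j^{4H-1-2\varepsilon}\sim L^{4H-2\varepsilon}$ and hence directly to $|t-s|^{4H-2\varepsilon}2^{-2n\varepsilon}$ via $L2^{-n}\le |t-s|$. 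You instead keep the sharp pointwise decay $|\vp(Y_{2i}Y_{2j})|\lesssim h^{2H}(1+|m|)^{2H-2}$ from Taylor's theorem, exploit the square-summability $\sum_m(1+|m|)^{4H-4}<\infty$ (valid for all $H<\tfrac34$) to get the cleaner intermediate bound $2^{-4Hn}L$, and only invoke $\varepsilon<2H-\tfrac12$ in the final interpolation $x^{1-4H+2\varepsilon}\le 1$ for $x=(t-s)2^n\ge 1$ --- the same interpolation the paper uses for its diagonal term. The two arguments are equivalent in substance (the integral representation \emph{is} the second difference), but your organization isolates more transparently where each hypothesis enters: $H<\tfrac34$ for summability of the squared covariances, $\varepsilon<2H-\tfrac12$ only for converting the count $\ell-k$ into the target exponents. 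Your verification of the configuration constraints $L\le (t-s)2^n$ and $(t-s)2^n\ge 1$ from \eqref{choice-s-t} is also correct, as is the reduction of the three sums to a single computation via stationarity of increments.
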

\begin{proof}
Let us naturally write 
$$\sum_{i,j=k}^{\ell-1} \vp\big(Y_{2i}Y_{2j+1}\big)^2=\sum_{i=k}^{\ell-1} \vp\big(Y_{2i}Y_{2i+1}\big)^2+\sum_{\substack{i,j=k\\i<j}}^{\ell-1} \vp\big(Y_{2i}Y_{2j+1}\big)^2+\sum_{\substack{i,j=k\\j<i}}^{\ell-1} \vp\big(Y_{2i}Y_{2j+1}\big)^2 \ .$$
First, one has obviously (denoting by $c_H$ any positive constant that only depends on $H$)
$$\sum_{i=k}^{\ell-1} \vp\big(Y_{2i}Y_{2i+1}\big)^2=\sum_{i=k}^{\ell-1} \frac{c_H}{2^{4Hn}}=c_H \frac{\ell-k}{2^{4Hn}}\leq c_H \frac{|t-s|}{2^{n(4H-1)}} \leq c_H \frac{|t-s|^{4H-2\varepsilon}}{2^{2n\varepsilon}} \ ,$$
where we have used the fact that $2\varepsilon<4H-1$ and $2^{-n} \leq |t-s|$ (by (\ref{choice-s-t})) to get the last inequality.

\smallskip

Then write
\begin{align*}
\sum_{\substack{i,j=k\\i<j}}^{\ell-1} \vp\big(Y_{2i}Y_{2j+1}\big)^2 &=c_H \sum_{j=k}^{\ell-1}\sum_{i=k}^{j-1} \bigg( \int_{t_{2j+1}^{n+1}}^{t_{2j+2}^{n+1}} d\tau_1 \int_{t_{2i}^{n+1}}^{t_{2i+1}^{n+1}} \frac{d\tau_2}{(\tau_1-\tau_2)^{2-2H}} \bigg)^2\\
&\leq c_H \sum_{j=k}^{\ell-1}\sum_{i=k}^{j-1} \bigg( \int_{t_{j}^n}^{t_{j+1}^n} d\tau_1 \int_{t_{i}^n}^{t_{i+1}^n} \frac{d\tau_2}{(\tau_1-\tau_2)^{2-2H}} \bigg)^2 \ .
\end{align*}
Using elementary changes of variables, we can easily rewrite the latter quantity as
$$\sum_{j=k}^{\ell-1}\sum_{i=k}^{j-1} \bigg( \int_{t_{j}^n}^{t_{j+1}^n} d\tau_1 \int_{t_{i}^n}^{t_{i+1}^n} \frac{d\tau_2}{(\tau_1-\tau_2)^{2-2H}} \bigg)^2=2^{-4Hn} \sum_{j=1}^{\ell-k}\sum_{i=1}^{j-1} \bigg( \int_0^1 d\tau_1 \int_{i-1}^{i} \frac{d\tau_2}{(\tau_1+\tau_2)^{2-2H}}\bigg)^2 \ ,$$
which yields 
\begin{eqnarray*}
\sum_{\substack{i,j=k\\i<j}}^{\ell-1} \vp\big(Y_{2i}Y_{2j+1}\big)^2&\leq& \frac{c_H}{2^{4Hn}} \sum_{j=1}^{\ell-k}\sum_{i=1}^{j-1} \bigg( \int_0^1 \frac{d\tau_1}{\tau_1^{1-\varepsilon}} \int_{i-1}^{i} \frac{d\tau_2}{\tau_2^{1-2H+\varepsilon}}\bigg)^2\\
&\leq &\frac{c_{H,\varepsilon} }{2^{4Hn}} \sum_{j=1}^{\ell-k}\sum_{i=1}^{j-1} \int_{i-1}^{i} \frac{d\tau_2}{\tau_2^{2-4H+2\varepsilon}}\ \leq \ \frac{c_{H,\varepsilon} }{2^{4Hn}} \sum_{j=1}^{\ell-k} \int_{0}^{j} \frac{d\tau_2}{\tau_2^{2-4H+2\varepsilon}}\\
& \leq &  \frac{c_{H,\varepsilon} }{2^{4Hn}} \sum_{j=1}^{\ell-k} j^{4H-2\varepsilon-1} \ \leq \ c_{H,\varepsilon}  \frac{|\ell-k|^{4H-2\varepsilon}}{2^{4Hn}} \ \leq \ c_{H,\varepsilon}  \frac{|t-s|^{4H-2\varepsilon}}{2^{2n\varepsilon}} \ .
\end{eqnarray*}
The same arguments can of course be used to bound $\sum_{i,j=k}^{\ell-1} 1_{\{j<i\}}\vp\big(Y_{2i}Y_{2j+1}\big)^2$, so that
$$\sum_{i,j=k}^{\ell-1} \vp\big(Y_{2i}Y_{2j+1}\big)^2 \leq c_{H,\varepsilon} \frac{|t-s|^{4H-2\varepsilon}}{2^{2n\varepsilon}} \ .$$
We can then estimate $\sum_{i,j=k}^{\ell-1} \vp\big(Y_{2i}Y_{2j}\big)^2$ and $\sum_{i,j=k}^{\ell-1} \vp\big(Y_{2i+1}Y_{2j+1}\big)^2$ along the same procedure.
\end{proof}

\begin{lemma}\label{lem:estim-cova-2}
With the notations of the above proof, one has, for all $j=1,\ldots,m$ and $\varepsilon \in [0,H)$,
\begin{equation}\label{estim-cova-2}
\max\bigg( \sum_{i=k}^{\ell-1} \vp\big( \der X_{u_j v_j} Y_{2i}\big)^2\ ,\ \sum_{i=k}^{\ell-1} \vp\big(\der X_{u_j v_j}Y_{2i+1}\big)^2\bigg) \lesssim |v_j-u_j|^{2H}\frac{|t-s|^{2H-\varepsilon}}{2^{n\varepsilon}} \ .
\end{equation}
\end{lemma}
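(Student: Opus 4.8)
The starting point is the exact value of the covariance of two increments of $X$ over disjoint intervals, which is read off directly from \eqref{cova-NC-fBm}: for $0\le a\le b\le c\le d\le 1$,
\begin{equation*}
\vp\big(\der X_{ab}\,\der X_{cd}\big)=\tfrac12\big\{(c-b)^{2H}+(d-a)^{2H}-(d-b)^{2H}-(c-a)^{2H}\big\}.
\end{equation*}
Writing $h:=2^{-(n+1)}$ and recalling $Y_{2i}=\der X_{t_i^n,\,t_i^n+h}$, I first observe that $[u_j,v_j]\subseteq[0,s]$ lies strictly to the left of every interval carrying a $Y$, since $v_j\le s<t_k^n\le t_i^n$ by \eqref{choice-s-t}. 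Setting $w:=v_j-u_j$ and $\delta_i:=t_i^n-v_j>0$, a short rearrangement of the formula above recasts it in the telescopable form
\begin{equation*}
\vp\big(\der X_{u_jv_j}\,Y_{2i}\big)=-\tfrac12\big[\psi(\delta_i)-\psi(\delta_i+h)\big],\qquad \psi(\delta):=(\delta+w)^{2H}-\delta^{2H}.
\end{equation*}
Since $2H<1$, the map $x\mapsto x^{2H}$ is concave, so $\psi$ is nonnegative, decreasing, vanishes at infinity, and obeys $\psi(\delta)\le w^{2H}$ by subadditivity; in particular the bracket is $\ge 0$, reflecting the negative correlation of disjoint fractional increments when $H<\tfrac12$.

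The key idea is to estimate the sum of squares through the elementary inequality $\sum_i a_i^2\le(\max_i a_i)\sum_i a_i$ applied to $a_i:=|\vp(\der X_{u_jv_j}Y_{2i})|=\tfrac12[\psi(\delta_i)-\psi(\delta_i+h)]$. For $\sum_i a_i$ I would use a telescoping argument: the intervals $[\delta_i,\delta_i+h]$ for $i=k,\dots,\ell-1$ are pairwise disjoint (consecutive $\delta_i$ differ by $2h=2^{-n}$) and ordered inside $[\delta_k,\infty)$, so monotonicity of $\psi$ gives $\sum_i[\psi(\delta_i)-\psi(\delta_i+h)]\le\psi(\delta_k)\le w^{2H}$, i.e. $\sum_i a_i\le\tfrac12 w^{2H}$. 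For the maximum I would use the closed form together with subadditivity once more:
\begin{equation*}
\psi(\delta_i)-\psi(\delta_i+h)=\big[(\delta_i+h)^{2H}-\delta_i^{2H}\big]-\big[(\delta_i+h+w)^{2H}-(\delta_i+w)^{2H}\big]\le(\delta_i+h)^{2H}-\delta_i^{2H}\le h^{2H},
\end{equation*}
so that $\max_i a_i\le\tfrac12 h^{2H}=\tfrac12 2^{-2H(n+1)}$, uniformly in $i$. Crucially, this uniform bound stays finite even for the terms with $\delta_i$ small (the near-adjacent ones), which is precisely where a naive kernel estimate would blow up.

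Combining the two bounds gives $\sum_{i=k}^{\ell-1}\vp(\der X_{u_jv_j}Y_{2i})^2\lesssim w^{2H}2^{-2Hn}$, and it only remains to insert the $\varepsilon$-loss: since $\varepsilon\in[0,H)$ we have $2H-\varepsilon>0$, while \eqref{choice-s-t} forces $2^{-n}\le|t-s|$, whence $2^{-2Hn}=2^{-n(2H-\varepsilon)}2^{-n\varepsilon}\le|t-s|^{2H-\varepsilon}2^{-n\varepsilon}$. This yields exactly $\sum_i\vp(\der X_{u_jv_j}Y_{2i})^2\lesssim|v_j-u_j|^{2H}|t-s|^{2H-\varepsilon}2^{-n\varepsilon}$. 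The estimate for $Y_{2i+1}=\der X_{t_i^n+h,\,t_{i+1}^n}$ is identical after replacing $\delta_i$ by $\delta_i+h$: the relevant intervals $[\delta_i+h,\delta_i+2h]$ are again disjoint and the same telescoping and uniform bounds apply verbatim. The one genuine point — the potential obstacle — is to resist estimating each covariance in isolation: splitting the work between a telescoping bound on $\sum_i a_i$ (which harvests the factor $w^{2H}=|v_j-u_j|^{2H}$) and a uniform bound on $\max_i a_i$ (which harvests the regularising factor $2^{-2Hn}$) is what produces the correct product structure while automatically taming the near-adjacent terms; the rest is concavity bookkeeping for $x\mapsto x^{2H}$, and notably no restriction $H>\tfrac14$ is needed here.
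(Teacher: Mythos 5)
Your proof is correct, and it takes a genuinely different route from the paper's. The paper works from the integral representation of the covariance of disjoint increments, $\vp(\der X_{u_jv_j}Y_{2i})=c_H\int_{t_{2i}^{n+1}}^{t_{2i+1}^{n+1}}d\tau_1\int_{u_j}^{v_j}(\tau_1-\tau_2)^{2H-2}\,d\tau_2$, bounds the kernel pointwise by $(\tau_1-s)^{H-1}(v_j-\tau_2)^{H-1}$ (harvesting $|v_j-u_j|^{2H}$ from the $\tau_2$-integral), and then extracts the factor $|t-s|^{2H-\varepsilon}2^{-n\varepsilon}$ by interpolating the $\tau_1$-integral with exponents $\varepsilon/H$ and $1-\varepsilon/H$ before summing the squares over $i$. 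You instead keep the exact discrete covariance, recast it as the telescoping difference $-\tfrac12[\psi(\delta_i)-\psi(\delta_i+h)]$ of the decreasing, subadditively bounded function $\psi(\delta)=(\delta+w)^{2H}-\delta^{2H}$, and split the sum of squares via $\sum_i a_i^2\le(\max_i a_i)\sum_i a_i$: the telescoped $\ell^1$-norm yields $w^{2H}$ and the uniform bound yields $h^{2H}$. Both regroupings of the second-difference expression and both monotonicity claims check out (they use $H<\tfrac12$, as does the paper's positive kernel $c_H=H(1-2H)$), the disjointness of the intervals $[\delta_i,\delta_i+h]$ justifies the telescoping since consecutive $\delta_i$ differ by $2h$, and the conversion $2^{-2Hn}\le|t-s|^{2H-\varepsilon}2^{-n\varepsilon}$ via $2^{-n}\le|t-s|$ from \eqref{choice-s-t} is exactly how the paper also spends its $\varepsilon$. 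Your argument is more elementary (no singular-kernel or interpolation bookkeeping), handles the near-adjacent terms automatically through the uniform $h^{2H}$ bound, and in fact proves the slightly stronger intermediate estimate $\lesssim|v_j-u_j|^{2H}2^{-2Hn}$, only weakening it at the last step to match the stated form; the paper's kernel method has the advantage of running in parallel with the proof of Lemma \ref{lem:estim-cova-1} and of isolating exactly where the fractional covariance enters, which matters for the generalizations discussed in Section 2.6.2.
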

\begin{proof}
Recall that for all $j=1,\ldots,m$ and $i=k,\ldots,\ell-1$, one has $0\leq u_j\leq v_j \leq s \leq t_{2i}^{n+1}\leq t_{2i+1}^{n+1}$, and so
\begin{align*}
&\sum_{i=k}^{\ell-1} \vp\big( \der X_{u_j v_j} Y_{2i}\big)^2=c_H \sum_{i=k}^{\ell-1} \bigg( \int_{t_{2i}^{n+1}}^{t_{2i+1}^{n+1}} d\tau_1 \int_{u_j}^{v_j} \frac{d\tau_2}{(\tau_1-\tau_2)^{2-2H}} \bigg)^2\\
&\leq c_H \sum_{i=k}^{\ell-1} \bigg( \int_{t_{2i}^{n+1}}^{t_{2i+1}^{n+1}} \frac{d\tau_1}{(\tau_1-s)^{1-H}} \int_{u_j}^{v_j} \frac{d\tau_2}{(v_j-\tau_2)^{1-H}} \bigg)^2\\
&\leq c_H |v_j-u_j|^{2H} \sum_{i=k}^{\ell-1} \bigg( \int_{t_{2i}^{n+1}}^{t_{2i+1}^{n+1}} \frac{d\tau_1}{(\tau_1-s)^{1-H}}\bigg) \bigg( \int_{t_{2i}^{n+1}}^{t_{2i+1}^{n+1}} \frac{d\tau_1}{(\tau_1-t_{2i}^{n+1})^{1-H}} \bigg)^{\frac{\varepsilon}{H}}\bigg( \int_s^t \frac{d\tau_1}{(\tau_1-s)^{1-H}} \bigg)^{1-\frac{\varepsilon}{H}}\\
&\leq c_H |v_j-u_j|^{2H} \frac{|t-s|^{H-\varepsilon}}{2^{n\varepsilon}} \sum_{i=k}^{\ell-1} \int_{t_{2i}^{n+1}}^{t_{2i+1}^{n+1}} \frac{d\tau_1}{(\tau_1-s)^{1-H}}\ \leq \  c_H |v_j-u_j|^{2H} \frac{|t-s|^{2H-\varepsilon}}{2^{n\varepsilon}} \ .\\
\end{align*}
The same arguments apply of course to $\sum_{i=k}^{\ell-1} \vp\big( \der X_{u_j v_j} Y_{2i+1}\big)^2$.
\end{proof}

\begin{corollary}\label{cor:estim-cova-2-gene}
With the notations of the above proof, one has, for all $j=1,\ldots,m$ and $\varepsilon,\varepsilon' \in [0,H)$,
\begin{equation}\label{estim-cova-2-approx}
\max\bigg( \sum_{i=k}^{\ell-1} \vp\big( \der X^{(N_j)}_{u_j v_j} Y_{2i}\big)^2\ ,\ \sum_{i=k}^{\ell-1} \vp\big(\der X^{(N_j)}_{u_j v_j}Y_{2i+1}\big)^2\bigg) \lesssim |v_j-u_j|^{2H}\frac{|t-s|^{2H-\varepsilon}}{2^{n\varepsilon}}
\end{equation}
and
\begin{align}\label{estim-cova-2-approx-2}
&\max\bigg( \sum_{i=k}^{\ell-1} \vp\big( \der \{X^{(N_j)}-X\}_{u_j v_j} Y_{2i}\big)^2\ ,\ \sum_{i=k}^{\ell-1} \vp\big(\der \{X^{(N_j)}-X\}_{u_j v_j}Y_{2i+1}\big)^2\bigg)\nonumber\\
&\hspace{6cm} \lesssim \frac{|v_j-u_j|^{2H-\varepsilon'}}{2^{N_j \varepsilon'}}\frac{|t-s|^{2H-\varepsilon}}{2^{n\varepsilon}} \ ,
\end{align}
for some proportional constants independent of $n,N_j$.
\end{corollary}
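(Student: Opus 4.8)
The plan is to derive both estimates from Lemma \ref{lem:estim-cova-2} by exploiting the explicit piecewise-linear structure of the interpolant $X^{(N_j)}$ introduced in (\ref{approx-x}). Fix $j$ and write $u_j\in[t^{N_j}_p,t^{N_j}_{p+1})$, $v_j\in[t^{N_j}_q,t^{N_j}_{q+1})$ with $p\leq q$. A direct computation from (\ref{approx-x}) yields the decomposition
\[
\der X^{(N_j)}_{u_j v_j}=\der X_{t^{N_j}_p t^{N_j}_q}+\mu_q\,\der X_{t^{N_j}_q t^{N_j}_{q+1}}-\mu_p\,\der X_{t^{N_j}_p t^{N_j}_{p+1}}\ ,\qquad \mu_p,\mu_q\in[0,1]\ ,
\]
so that the approximated increment is a combination of a genuine \enquote{bulk} increment of $X$ over $[t^{N_j}_p,t^{N_j}_q]\subseteq[0,s]$ and of (at most) two \enquote{boundary} increments over dyadic cells of length $2^{-N_j}$, each carrying a coefficient bounded by $1$. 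Note that, since $N_j\geq N>n$ in the setting of the above proof, a boundary cell has length $2^{-N_j}\leq 2^{-(n+1)}$, i.e. at most the length of one of the intervals defining the $Y_{2i}$.

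For (\ref{estim-cova-2-approx}) I would argue according to the size of $|v_j-u_j|$. When $|v_j-u_j|\geq 2^{-N_j}$, I treat the three terms above separately and add their $\ell^2$-controls (via $\big(\sum_i(a_i+b_i+c_i)^2\big)^{1/2}\leq(\sum_i a_i^2)^{1/2}+\cdots$). Since $[t^{N_j}_p,t^{N_j}_q]\subseteq[0,s]$ with $t^{N_j}_q-t^{N_j}_p\leq|v_j-u_j|+2^{-N_j}\lesssim|v_j-u_j|$, Lemma \ref{lem:estim-cova-2} applies verbatim to the bulk term and gives exactly the target right-hand side; each boundary term, with coefficient $\leq1$ and increment length $2^{-N_j}$, contributes at most $2^{-2N_jH}|t-s|^{2H-\varepsilon}2^{-n\varepsilon}\leq|v_j-u_j|^{2H}|t-s|^{2H-\varepsilon}2^{-n\varepsilon}$ by the same covariance estimate. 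In the complementary regime $|v_j-u_j|<2^{-N_j}$, the three terms must be kept together (their contributions on a shared cell partially cancel): here $u_j,v_j$ lie in adjacent cells and $\der X^{(N_j)}_{u_j v_j}$ is an increment over at most two cells carrying coefficients of size $\lesssim 2^{N_j}(v_j-u_j)\leq1$, and a short computation using $2^{2N_j(1-H)}(v_j-u_j)^{2-2H}\leq1$ recovers the bound $|v_j-u_j|^{2H}|t-s|^{2H-\varepsilon}2^{-n\varepsilon}$.

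For (\ref{estim-cova-2-approx-2}) the decisive simplification is that the difference $\der\{X^{(N_j)}-X\}_{u_j v_j}$ \emph{drops the bulk term entirely}: it is supported on the two boundary cells only, being a combination (with coefficients in $[-1,1]$) of increments of $X$ over sub-intervals of $[t^{N_j}_p,t^{N_j}_{p+1}]$ and $[t^{N_j}_q,t^{N_j}_{q+1}]$. Setting $S:=\sum_{i=k}^{\ell-1}\vp\big(\der\{X^{(N_j)}-X\}_{u_j v_j}Y_{2i}\big)^2$ and $K:=|t-s|^{2H-\varepsilon}2^{-n\varepsilon}$, the boundary estimate then gives $S\lesssim 2^{-2N_jH}K$, while applying (\ref{estim-cova-2-approx}) to $X^{(N_j)}$ and Lemma \ref{lem:estim-cova-2} to $X$, combined through the $\ell^2$-triangle inequality, gives $S\lesssim|v_j-u_j|^{2H}K$. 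Since a geometric mean of two upper bounds is again an upper bound, interpolating with weight $\theta=\varepsilon'/(2H)\in[0,1)$ yields
$$
S\;\lesssim\;\big(2^{-2N_jH}K\big)^{\theta}\big(|v_j-u_j|^{2H}K\big)^{1-\theta}\;=\;2^{-N_j\varepsilon'}|v_j-u_j|^{2H-\varepsilon'}K\ ,
$$
which is precisely (\ref{estim-cova-2-approx-2}).

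The main obstacle, common to both bounds, is that a boundary cell $[t^{N_j}_q,t^{N_j}_{q+1}]$ may protrude past $v_j$ and even past $s$ (up to $s+2^{-N_j}$), so Lemma \ref{lem:estim-cova-2} — which requires the increment interval to sit inside $[0,s]$, strictly separated from the $Y$-intervals — does not apply to it verbatim. This is where the length bound $2^{-N_j}\leq2^{-(n+1)}$ is used: the cell can meet at most $O(1)$ of the $Y_{2i}$-intervals, so one splits the sum over $i$ into the separated terms, handled by the kernel estimate of Lemma \ref{lem:estim-cova-2}, and the $O(1)$ offending terms, handled by the crude Cauchy--Schwarz bound $|\vp(\der X_{ab}\der X_{cd})|\leq\|\der X_{ab}\|\,\|\der X_{cd}\|=4|b-a|^H|d-c|^H$ coming from (\ref{trace-norm-1}) and (\ref{holder-regu}). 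Verifying that these few crude terms fit the target is the one genuine computation: their total is $\lesssim 2^{-2N_jH}2^{-2nH}$, and since $|t-s|\geq2^{-n}$ by (\ref{choice-s-t}) one has $2^{-2nH}\leq|t-s|^{2H-\varepsilon}2^{-n\varepsilon}=K$, so the crude contribution is indeed $\lesssim 2^{-2N_jH}K$, exactly as needed.
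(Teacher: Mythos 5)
Your proposal is correct and follows essentially the same route as the paper: the same piecewise-linear decomposition of $\der X^{(N_j)}_{u_j v_j}$ into a bulk increment plus boundary-cell increments, Lemma \ref{lem:estim-cova-2} for the separated terms, and the crude bound $|\vp(\der X_{ab}Y_{2i})|\leq \|\der X_{ab}\|\|Y_{2i}\|$ combined with $|t-s|\geq 2^{-n}$ for the $O(1)$ terms where the last cell protrudes past $s$ (the paper isolates exactly the $i=k$ term there). The only deviation is your geometric interpolation with weight $\varepsilon'/(2H)$ to obtain (\ref{estim-cova-2-approx-2}), where the paper instead bounds each boundary summand directly by $2^{-2N_jH}K$ and converts via $2^{-N_j}\leq |v_j-u_j|$; your variant is equivalent and handles the regime $|v_j-u_j|<2^{-N_j}$ slightly more uniformly.
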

\begin{proof}
For more clarity, set $t_r:=t_r^{N_j}$ ($r=0,\ldots,2^{N_j}$) for the whole proof. Assume that 
\begin{equation}\label{encadr-u-j-v-j}
t_p \leq u_j<t_{p+1} < \ldots < t_q\leq v_j< t_{q+1} \quad \text{with} \ |v_j-u_j|\geq 2^{-N_j} \ ,
\end{equation}
and write
\begin{align}
\der X^{(N_j)}_{u_j v_j}&=\der X^{(N_j)}_{u_j t_{p+1}}+\der X_{t_{p+1} t_{q}}+\der X^{(N_j)}_{t_{q} v_j}\nonumber\\
&=\der X_{t_{p}t_{p+1}}\big(1-2^{-N_j} (u_j-t_p)\big)+\der X_{t_{p+1} t_{q}}+\der X_{t_{q} t_{q+1}}2^{-N_j}(v_j-t_{q}) \ .\label{decomp-appro}
\end{align}
If $t_{q+1}\leq s$, we can apply (\ref{estim-cova-2}) to each of three above terms, which immediately gives (\ref{estim-cova-2-approx}). If $t_{q+1}>s$, we can still apply (\ref{estim-cova-2}) to the first two summands in  (\ref{decomp-appro}), but not to the third one. In this case, let us first write
\begin{align*}
\sum_{i=k}^{\ell-1} \vp\big( \der X^{(N_j)}_{t_{q} t_{q+1}} Y_{2i}\big)^2&=\vp\big( \der X^{(N_j)}_{t_{q} t_{q+1}} Y_{2k}\big)^2+ \sum_{i=k+1}^{\ell-1} \vp\big( \der X^{(N_j)}_{t_{q} t_{q+1}} Y_{2i}\big)^2\\
&\lesssim \big\| \der X^{(N_j)}_{t_{q} t_{q+1}}\big\|^2 \big\|Y_{2k}\big\|^2+\sum_{i=k+1}^{\ell-1} \vp\big( \der X^{(N_j)}_{t_{q} t_{q+1}} Y_{2i}\big)^2\\
&\lesssim |v_j-u_j|^{2H}\frac{|t-s|^{2H-\varepsilon}}{2^{n\varepsilon}} +\sum_{i=k+1}^{\ell-1} \vp\big( \der X^{(N_j)}_{t_{q} t_{q+1}} Y_{2i}\big)^2 \ .
\end{align*}
Due to $N_j\geq N\geq n$, we know that $t_{q+1} <t_{2k+2}^{n+1}$, and we can therefore apply the result of (\ref{estim-cova-2}) to the remaining sum, which entails, as desired,
$$\sum_{i=k+1}^{\ell-1} \vp\big( \der X^{(N_j)}_{t_{q} t_{q+1}} Y_{2i}\big)^2\lesssim \frac{|t-t_{2k+2}^{n+1}|^{2H-\varepsilon}}{2^{n\varepsilon}} 2^{-2HN_j}\lesssim \frac{|t-s|^{2H-\varepsilon}}{2^{n\varepsilon}} |v_j-u_j|^{2H} \ .$$
The case where $|v_j-u_j|\leq 2^{-N_j}$ can then be handled with elementary arguments, and this achieves the proof of (\ref{estim-cova-2-approx}).

\smallskip

The proof of (\ref{estim-cova-2-approx-2}) follows from similar considerations. Observe indeed that, in the situation (\ref{encadr-u-j-v-j}), 
$$\der \{X^{(N_j)}-X\}_{u_j v_j}=\der X_{t_p u_j}-\der X_{t_q v_j}-2^{-N_j}(u_j-t_p) \der X_{t_p t_{p+1}}+2^{-N_j}(v_j-t_q) \der X_{t_q t_{q+1}} \ ,$$
and from here we can apply the above reasoning to each of the summands.

\end{proof}

\section{Non-convergence of the L{\'e}vy area when $H\leq \frac14$}\label{sec:proof-main-result-2}

Let us finally provide the details behind the second main technical result of our study.

\begin{proof}[Proof of Proposition \ref{prop:non-convergence}]
Just as in (\ref{main-term-area}), we have
$$\mathbb{X}^{2,(n+1)}_{01}[1]-\mathbb{X}^{2,(n)}_{01}[1]=\frac12 \sum_{i=0}^{2^n-1} \big\{ Y^{(n)}_{2i}Y^{(n)}_{2i+1}-Y^{(n)}_{2i+1}Y^{(n)}_{2i} \big\} \ ,$$
with $Y^{(n)}_{i}:=\der X_{t_{i}^{n+1}t_{i+1}^{n+1}}$, and so, setting 
$$M^{(n)}:=\vp\big( \big(\mathbb{X}^{2,(n+1)}_{01}[1]-\mathbb{X}^{2,(n)}_{01}[1]\big) \big(\mathbb{X}^{2,(n+1)}_{01}[1]-\mathbb{X}^{2,(n)}_{01}[1] \big)^\ast \big) \ ,$$
we can write
$$
M^{(n)}=\frac14 \sum_{i,j=0}^{2^n-1} \vp\Big( \big\{ Y^{(n)}_{2i}Y^{(n)}_{2i+1}-Y^{(n)}_{2i+1}Y^{(n)}_{2i} \big\} \big\{ Y^{(n)}_{2j+1}Y^{(n)}_{2j}-Y^{(n)}_{2j}Y^{(n)}_{2j+1} \big\} \Big) \ .
$$
Applying formula (\ref{form-wick}) to the semicircular family $\{Y_i\}_{i=0,\ldots ,2^{n+1}-1}$, we can easily expand the latter quantity as
\begin{equation}\label{m-n-delta-n}
M^{(n)}=\frac12 \sum_{i,j=0}^{2^n-1} \Delta^{(n)}(i,j) \ ,
\end{equation}
where
$$\Delta^{(n)}(i,j):=\vp\big(Y^{(n)}_{2i}Y^{(n)}_{2j}\big)\vp\big( Y^{(n)}_{2i+1}Y^{(n)}_{2j+1}\big)-\vp\big(Y^{(n)}_{2i}Y^{(n)}_{2j+1}\big)\vp\big( Y^{(n)}_{2j}Y^{(n)}_{2i+1}\big) \ .$$
At this point, observe that $\Delta^{(n)}(i,j)=\Delta^{(n)}(j,i)$ for all $i,j=0,\ldots,2^n-1$, and for $0\leq i\leq j\leq 2^n-1$, it can be checked that
$$\Delta^{(n)}(i,j) =\frac{1}{4 \cdot 2^{4H(n+1)}} \Gamma_H(2(j-i)) \ ,$$
where, for every $k\geq 0$, $\Gamma_H(k)$ is defined as
\begin{align*}
\Gamma_H(k):=&\big( 2|k|^{2H}-|k+1|^{2H}-|k-1|^{2H}\big)^2\\
&-\big(2|k-1|^{2H}-|k|^{2H}-|k-2|^{2H}\big) \big( 2|k+1|^{2H}-|k|^{2H}-|k+2|^{2H}\big) \ .
\end{align*}
Going back to (\ref{m-n-delta-n}), we thus have
\begin{align}
M^{(n)}&=\frac{1}{8 \cdot 2^{4H(n+1)}} \bigg\{ 2^n \Gamma_H(0)+2\sum_{k=1}^{2^n-1} (2^n-k) \Gamma_H(2k) \bigg\}\nonumber\\
&=\frac{2^{n(1-4H)}}{2^{4H+3}} \bigg\{ \Gamma_H(0)+2\sum_{k=1}^{2^n-1} \bigg(1-\frac{k}{2^n}\bigg) \Gamma_H(2k) \bigg\} \ .\label{m-n-bis}
\end{align}
Denoting by $f_H$ the function at the center of the subsequent Lemma \ref{lem:funct-f-h}, one has, for every $k\geq 1$, $\Gamma_H(k)=-k^{4H} f_H\big( \frac{1}{k}\big)$, and so, using the result of this lemma, we get that for every $k\geq 1$,
$$\big| \Gamma_H(2k)\big| \leq |2k|^{4H}\cdot  \frac{2}{|2k|^4} \leq \frac{1}{4k^3} \ ,$$
which implies that $\Gamma_H(2k) \geq -\frac{1}{4k^3}$. Injecting this lower bound into (\ref{m-n-bis}), we end up with 
$$M^{(n)} \geq \frac{2^{n(1-4H)}}{2^{4H+3}} \bigg\{ \Gamma_H(0)-\frac12\sum_{k=1}^\infty \frac{1}{k^3}+\frac{1}{2^{n+1}} \sum_{k=1}^{2^n-1}\frac{1}{k^2} \bigg\}\geq \frac{2^{n(1-4H)}}{2^{4H+3}} \bigg\{ \Gamma_H(0)-\frac12 \sum_{k=1}^\infty \frac{1}{k^3} \bigg\} \ ,$$
and we can now explicitly check that this lower bound is indeed strictly positive, due to
$$\Gamma_H(0)=2^{2H}\{4-2^{2H}\} \geq 3 \ .$$

\smallskip

As for the second assertion in our statement, it relies of course on estimate (\ref{trace-norm-1}), that is on the bound
$$M^{(n)} \leq \big\| \mathbb{X}^{2,(n+1)}_{01}[1]-\mathbb{X}^{2,(n)}_{01}[1]  \big\|^2 \ .$$
\end{proof}

\begin{lemma}\label{lem:funct-f-h}
Consider the function $f_H$ defined for every $x\in [0,\frac12]$ as
\begin{align*}
&f_H(x):=\\
&\big[2(1-x)^{2H}-1-(1-2x)^{2H}\big] \big[2(1+x)^{2H}-1- (1+2x)^{2H}\big]-\big[ 2-(1+x)^{2H}-(1-x)^{2H}\big]^2 \ .
\end{align*}
Then for all $0<H\leq \frac14$ and $k\geq 1$, it holds that $|f_H(\frac{1}{2k})|\leq \frac{2}{|2k|^4}$.
\end{lemma}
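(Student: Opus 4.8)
The plan is to set $a:=2H\in(0,\tfrac12]$ and $g(y):=y^{a}$, and to recognise the three brackets defining $f_H$ as (negatives of) centred second differences of $g$. Writing $A(x):=2(1-x)^{a}-1-(1-2x)^{a}$, $B(x):=2(1+x)^{a}-1-(1+2x)^{a}$ and $C(x):=2-(1+x)^{a}-(1-x)^{a}$, so that $f_H=AB-C^{2}$, I would exploit the integral form of Taylor's remainder,
\begin{equation*}
g(y+h)-2g(y)+g(y-h)=\int_{-h}^{h}(h-|u|)\,g''(y+u)\,du ,
\end{equation*}
together with $g''(y)=-a(1-a)y^{a-2}$. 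Since $a(1-a)>0$, this at once gives $A,B,C>0$ and exhibits each as a positive integral: $A$ is centred at $1-x$ with step $x$, i.e. $A(x)=a(1-a)\int_{-x}^{x}(x-|u|)(1-x+u)^{a-2}\,du$, while $B$ is centred at $1+x$ and $C$ at $1$. Because $A,B,C\geq0$ one has $|f_H|=|AB-C^{2}|\leq\max(AB,C^{2})$, so it suffices to bound $AB$ and $C^{2}$ by a constant multiple of $x^{4}$ on $(0,\tfrac12]$, uniformly in $a$.

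For $B$ and $C$ the crude estimate $|g(y+h)-2g(y)+g(y-h)|\leq h^{2}\sup_{[y-h,y+h]}|g''|$ (which follows from the identity above and $\int_{-h}^{h}(h-|u|)\,du=h^{2}$) is enough, since the relevant integration intervals stay away from the origin. The interval for $B$ is $[1,1+2x]$, where $|g''|\leq a(1-a)$, giving $B(x)\leq a(1-a)\,x^{2}\leq\tfrac14 x^{2}$; the interval for $C$ is $[1-x,1+x]\subset[\tfrac12,\tfrac32]$, where $|g''|\leq a(1-a)(1-x)^{a-2}\leq a(1-a)2^{2-a}\leq1$, giving $C(x)\leq x^{2}$.

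The only delicate estimate is the one for $A$, and this is where I expect the main obstacle to lie: the integration interval $[1-2x,1]$ reaches the origin as $x\uparrow\tfrac12$, so $g''$ is unbounded there and the sup-bound fails. The idea is to keep the triangular weight and tame the singularity. I would rescale $u=xv$ to obtain $A(x)/x^{2}=a(1-a)\int_{-1}^{1}(1-|v|)\bigl(1-x(1-v)\bigr)^{a-2}\,dv$ and split at $v=0$. On $v\in[0,1]$ the base is $\geq\tfrac12$, so this part contributes at most $2a(1-a)$. On $v\in[-1,0]$, after the substitution $v'=-v$, the crucial point is the elementary inequality $1-x(1+v')\geq x(1-v')$, which holds precisely because $x\leq\tfrac12$; it replaces the singular factor by the integrable $(x(1-v'))^{a-2}$, and $\int_{0}^{1}(1-v')\bigl(x(1-v')\bigr)^{a-2}\,dv'=x^{a-2}/a$. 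This yields $A(x)/x^{2}\leq 2a(1-a)+(1-a)x^{a-2}\leq\tfrac12+(1-a)2^{2-a}\leq\tfrac92$, where I used $x\leq\tfrac12$ and the monotonicity of $a\mapsto(1-a)2^{2-a}$ on $(0,\tfrac12]$.

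Combining the three bounds gives $AB\leq\tfrac92 x^{2}\cdot\tfrac14 x^{2}=\tfrac98 x^{4}$ and $C^{2}\leq x^{4}$, hence $|f_H(x)|\leq\max(\tfrac98,1)\,x^{4}=\tfrac98 x^{4}\leq 2x^{4}$ for every $x\in(0,\tfrac12]$. Specialising to $x=\tfrac1{2k}$, where $2x^{4}=2/|2k|^{4}$, then yields the claim. The factor $2$ on the right-hand side leaves comfortable room for these deliberately lossy bounds; I note in passing that the sharper cancellation $f_H(x)=O(x^{6})$ (the coefficients of $x^{2}$ and $x^{4}$ vanish after expanding $AB-C^{2}$) is more than is needed here, so I would not pursue it.
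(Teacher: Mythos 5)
Your overall strategy---writing each bracket as a positive weighted integral of $g''$ with $g(y)=y^{2H}$, so that $f_H=AB-C^2$ with $A,B,C\geq 0$, $|f_H|\leq\max(AB,C^2)$, and then bounding each factor by a multiple of $x^2$---is essentially the same as the paper's (which likewise represents the brackets as double integrals of $(1\pm(y_1+y_2))^{2H-2}$), and your bounds on $B$ and $C$ are correct. The bound on $A$, however, contains a genuine error. After rescaling, the half $v\in[-1,0]$ contributes $(1-a)x^{a-2}$ to $A(x)/x^2$, and you then assert $(1-a)x^{a-2}\leq(1-a)2^{2-a}$ \enquote{using $x\leq\frac12$}. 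Since $a-2<0$, the map $x\mapsto x^{a-2}$ is \emph{decreasing}, so $x\leq\frac12$ gives $x^{a-2}\geq 2^{2-a}$: the inequality is reversed, and your upper bound blows up as $x\to 0$. The root cause is that the taming inequality $1-x(1+v')\geq x(1-v')$ is hugely lossy for small $x$ (left side near $1$, right side near $0$, and the exponent $a-2$ is negative); carried through honestly, your argument only yields $A(x)\lesssim x^{a}$ and hence $|f_H(x)|\lesssim x^{a+2}$, which falls short of the claimed $O(x^4)$.

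The repair is a case split, which is exactly how the paper proceeds. For $k\geq 2$, i.e.\ $x\leq\frac14$, the integration interval $[1-2x,1]$ relevant to $A$ stays inside $[\frac12,1]$, so the same crude sup-bound you used for $C$ gives $A(x)\leq a(1-a)2^{2-a}x^2\leq x^2$, whence $AB\leq\frac14 x^4$, $C^2\leq x^4$ and $|f_H(x)|\leq x^4$. Only $x=\frac12$ ($k=1$) then needs the singular treatment: there your taming argument is legitimate (it is an equality at $x=\frac12$), or one can simply evaluate $A(1/2)=2^{1-2H}-1$ exactly, as the paper does. Alternatively, note that $A(x)/x^2=a(1-a)\int_{-1}^{1}(1-|v|)\bigl(1-x(1-v)\bigr)^{a-2}dv$ is nondecreasing in $x$, hence maximized at $x=\frac12$ where it equals $4(2^{1-a}-1)\leq 4$; some such additional step is indispensable, and as written your uniform bound on $A$ is false.
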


\begin{proof}
Note that the claimed bound follows from rough estimates (and is thus far from optimal). 

\smallskip

First, for $k=1$, let us write $f_H(1/2)$ as
\begin{align*}
f_H(1/2)=&\big[ 2^{1-2H}-1 \big]\bigg[  2H(1-2H)\int_0^{1/2} dy_1 \int_0^{1/2} dy_2 \, (1+(y_1+y_2))^{2H-2} \bigg]\\ 
&\hspace{2cm}-\bigg[2H(1-2H) \int_0^{1/2} dy \int_{-y}^y dz \, (1-z)^{2H-2} \bigg]^2  \ ,
\end{align*}
so that
\begin{align*}
|f_H(1/2)|& \leq \bigg\{ 2H(1-2H) (1/2)^2+(2H(1-2H))^22^{4-4H}\bigg|2\int_0^{1/2} dy \, y\bigg|^2 \bigg\}\\
& \leq \big\{ (1/2)^4+2^4 (1/2)^8\big\}  \leq 2 (1/2)^4 \ .
\end{align*}

\smallskip

Then, for every $x\in [0,\frac14]$, write $f_H(x)$ as
\begin{align*}
f_H(x)=&\big( 2H(1-2H)\big)^2 \bigg\{ \bigg[ \int_0^x dy_1 \int_0^x dy_2 \, (1-(y_1+y_2))^{2H-2} \bigg]\cdot\\ 
&\hspace{1cm}\bigg[ \int_0^x dy_1 \int_0^x dy_2 \, (1+(y_1+y_2))^{2H-2} \bigg]-\bigg[ \int_0^x dy \int_{-y}^y dz \, (1-z)^{2H-2} \bigg]^2 \bigg\} \ ,
\end{align*}
and as a result
$$|f_H(x)| \leq \frac{1}{16}\bigg\{ 2^{2-2H}x^4+\bigg(\frac43\bigg)^{4-4H}\bigg|2\int_0^x dy \, y\bigg|^2 \bigg\} \leq \frac{1}{16}\bigg\{ 4+\bigg(\frac43\bigg)^{4}\bigg\} x^4 \leq x^4 \ .$$
\end{proof}

\bigskip

\bigskip


\begin{thebibliography}{99}






\bibitem{biane-speicher}
P. Biane and R. Speicher: Stochastic calculus with respect to free Brownian motion and analysis on Wigner space.  \textit{Probab. Theory Related Fields}  \textbf{112}  (1998),  no. 3, 373-409.

\smallskip

\bibitem{q-gauss}
M. Bo{\.z}ejko, B. Kümmerer and R. Speicher: $q$-Gaussian processes: non-commutative and classical aspects.  \textit{Comm. Math. Phys.}  \textbf{185}  (1997),  no. 1, 129-154.





\smallskip

\bibitem{coutin-qian}
L. Coutin and Z. Qian: Stochastic analysis, rough path analysis and fractional Brownian motions.  \textit{Probab. Theory Related Fields}  \textbf{122}  (2002),  no. 1, 108-140.



\smallskip

\bibitem{deya-schott}
A. Deya and R. Schott: On the rough paths approach to non-commutative stochastic calculus. {\it J. Funct. Anal.} {\bf 265} (2013), no. 4, 594-628. 

\smallskip

\bibitem{deya-schott-2}
A. Deya and R. Schott: On stochastic calculus with respect to $q$-Brownian motion. {\it J. Funct. Anal.} {\bf 274} (2018), no. 4, 1047-1075. 

\smallskip

\bibitem{donati}
C. Donati-Martin: Stochastic integration with respect to $q$ Brownian motion.  \textit{Probab. Theory Related Fields}  \textbf{125}  (2003),  no. 1, 77-95.

\smallskip




\smallskip

\bibitem{friz-hairer}
P.~K. Friz and M. Hairer.
\newblock {\em A {Course} on {Rough} {Paths}: {With} an {Introduction} to
  {Regularity} {Structures}}.
\newblock Springer, August 2014.

\smallskip

\bibitem{friz-victoir-gauss}
P. Friz and N. Victoir: Differential equations driven by Gaussian signals.  \textit{Ann. Inst. Henri Poincaré Probab. Stat.}  \textbf{46}  (2010),  no. 2, 369-413.

\smallskip

\bibitem{friz-victoir-book}
P. K. Friz and N. Victoir: Multidimensional stochastic processes as rough paths. Theory and applications. Cambridge Studies in Advanced Mathematics, 120. Cambridge University Press, Cambridge, 2010.


\bibitem{gubi}
M. Gubinelli: Controlling rough paths.  {\it J. Funct. Anal.}  {\bf 216}  (2004),  no. 1, 86--140.

\smallskip


\bibitem{lyons-book}
T. Lyons and Z. Qian: System control and rough paths. Oxford Mathematical Monographs. Oxford Science Publications. Oxford University Press, Oxford, 2002. x+216 pp.

\smallskip

\bibitem{nica-speicher}
A. Nica and R. Speicher: Lectures on the Combinatorics of Free Probability. Cambridge University Press, 2006.

\smallskip

\bibitem{nourdin-fbm}
I. Nourdin: Selected Aspects of Fractional Brownian Motion. Springer, New York, 2012.

\smallskip

\bibitem{nourdin-taqqu}
I. Nourdin and M.S. Taqqu: Central and non-central limit theorems in a free probability setting. {\it J. Theoret. Probab.} {\bf 27} (2011), no. 1, 220-248.

\smallskip

\bibitem{nualart-tindel}
D. Nualart and S. Tindel: A construction of the rough path above fractional Brownian motion using Volterra's representation. {\it Ann. Probab.} {\bf 39} (2011), no. 3, 1061--1096.

\smallskip

\bibitem{matrix-approx}
J. C. Pardo, J.-L. P{\'e}rez and V. P{\'e}rez-Abreu: A Random Matrix Approximation for the Non-commutative Fractional Brownian Motion. {\it J. Theoret. Probab.} {\bf 29} (2016), no. 4, 1581-1598.






\smallskip

\bibitem{voiculescu}
D. Voiculescu: Limit laws for random matrices and free products.  {\it Invent. Math.}  {\bf 104}  (1991),  no. 1, 201--220.

\smallskip

\bibitem{young}
L.C. Young: An inequality of the Hölder type, connected with Stieltjes integration. {\it Acta Math.}, {\bf 67} (1936), no. 1, 251--282.


\end{thebibliography}
\end{document}